\documentclass[12pt,a4]{amsart}
\usepackage{amsmath}
\usepackage{amssymb}
\usepackage{amsthm}
\usepackage{enumitem}
\usepackage{url}
\usepackage{soul}
\usepackage{times}
\usepackage[T1]{fontenc}
\usepackage[export]{adjustbox}

\usepackage{color}
\usepackage{dsfont}
\usepackage{epsfig}
\usepackage[hidelinks]{hyperref}
\usepackage{setspace}
\usepackage{epstopdf}
\usepackage[authoryear,round,sort]{natbib}
\usepackage{comment}
\usepackage{booktabs}
\usepackage{bm}
\usepackage{float}
\usepackage[linesnumbered,ruled,vlined]{algorithm2e}
\SetKwInput{KwInput}{Input}                
\SetKwInput{KwOutput}{Output} 
\usepackage{stmaryrd}
\usepackage{subcaption,graphicx}
\usepackage{float}
\numberwithin{equation}{section}

\usepackage[font=small,labelfont=bf]{caption}
\captionsetup[table]{skip=20pt}
\DeclareCaptionFont{tiny}{\tiny}
\setlength\belowcaptionskip{5pt}
\setcounter{MaxMatrixCols}{10}

\newcommand{\norm}[1]{\left\lVert#1\right\rVert}
\usepackage{geometry}
\geometry{headsep=15pt}
\setlength{\oddsidemargin}{20mm} \setlength{\evensidemargin}{20mm}
\setlength{\voffset}{-1in} \setlength{\hoffset}{-1in}
\setlength{\textwidth}{175mm} \setlength{\topmargin}{0mm}
\setlength{\headheight}{15mm}
\setlength{\headsep}{10mm}
\setlength{\topskip}{0mm}
\setlength{\textheight}{225mm}
\theoremstyle{plain}

\newtheorem{theorem}{Theorem}[section]
\newtheorem{lemma}{Lemma}[section]
\newtheorem{corollary}{Corollary}[section]

\theoremstyle{definition}

\newtheorem{assumption}{Assumption}[section]

\theoremstyle{remark}
\newtheorem{rk}{Remark}[section]
\expandafter\let\expandafter\oldproof\csname\string\proof\endcsname
\let\oldendproof\endproof
\renewenvironment{proof}[1][\proofname]{%
  \oldproof[\noindent\textbf{#1.} ]%
}{\oldendproof}
\newcommand{\1}{\mathds{1}}

\newcommand{\mcH}{\mathcal{H}}

\newcommand{\E}{\mathbb{E}}
\newcommand{\mP}{\mathbb{P}}
\newcommand{\be}{\begin{equation}}
\newcommand{\ee}{\end{equation}}
\newcommand{\by}{\begin{eqnarray*}}
\newcommand{\ey}{\end{eqnarray*}}

\renewcommand{\leq}{\leqslant}
\renewcommand{\geq}{\geqslant}
\usepackage{xcolor}
\definecolor{dark-red}{rgb}{0.4,0.15,0.15}
\definecolor{dark-blue}{rgb}{0.15,0.15,0.4}
\definecolor{medium-blue}{rgb}{0,0,0.5}
\hypersetup{
    colorlinks, linkcolor={red},
    citecolor={blue}, urlcolor={blue}
}
\allowdisplaybreaks

\begin{document}
\title[Landscape modification meets spin systems]{Landscape modification meets spin systems: from torpid to rapid mixing, tunneling and annealing in the low-temperature regime}
\author{Michael C.H. Choi}
\address{Department of Statistics and Data Science and Yale-NUS College, National University of Singapore, Singapore}
\email{mchchoi@nus.edu.sg}

\date{\today}
\maketitle

\begin{abstract}
	
	Given a target Gibbs distribution $\pi^0_{\beta} \propto e^{-\beta \mcH}$ to sample from in the low-temperature regime on $\Sigma_N := \{-1,+1\}^N$, in this paper we propose and analyze Metropolis dynamics that instead target an alternative distribution $\pi^{f}_{\alpha,c,1/\beta} \propto e^{-\mcH^{f}_{\alpha,c,1/\beta}}$, where $\mcH^{f}_{\alpha,c,1/\beta}$ is a transformed Hamiltonian whose landscape is suitably modified and controlled by the parameters $f,\alpha,c$ and $\beta$ and shares the same set of stationary points as $\mcH$. With appropriate tuning of these parameters, the major advantage of the proposed Metropolis dynamics on the modified landscape is that it enjoys an $\mathcal{O}(1)$ critical height while its stationary distribution $\pi^{f}_{\alpha,c,1/\beta}$ maintains close proximity with the original target $\pi^0_{\beta}$ in the low-temperature. We prove rapid mixing and tunneling on the modified landscape with polynomial dependence on the system size $N$ and the inverse temperature $\beta$, while the original Metropolis dynamics mixes torpidly with exponential dependence on the critical height and $\beta$. In the setting of simulated annealing, we prove its long-time convergence under a power-law cooling schedule that is faster than the typical logarithmic cooling in the classical setup.
	
	We illustrate our results on a host of models including the Ising model on various deterministic and random graphs as well as  Derrida's Random Energy Model. In these applications, (with high probability) the original Metropolis dynamics mixes torpidly while the proposed dynamics on the modified landscape mixes rapidly with polynomial dependence on both $\beta$ and $N$ and find the approximate ground state provably in $\mathcal{O}(N^4)$ time. This paper highlights a novel use of the geometry and structure of the landscape, in particular the global minimum value of $\mcH$ or its estimate, to the design of accelerated samplers or optimizers.
		
	\smallskip
	
	\noindent \textbf{AMS 2010 subject classifications}: 60J27, 60J28, 65C40
	
	\noindent \textbf{Keywords}: Metropolis-Hastings; simulated annealing; spectral gap; Ising model; random energy model; metastability; landscape modification; energy transformation
\end{abstract}

\tableofcontents


\section{Introduction}\label{sec:intro}

In this paper, we consider spin systems on the discrete cube $\Sigma_N := \{-1,+1\}^N$ with a given Hamiltonian function $\mcH$ and $N \in \mathbb{N}$. The primary objective is to approximately sample from the Gibbs distribution $\pi^0_{\beta} \propto e^{-\beta \mcH}$ when the temperature $T = 1/\beta$ is small using the classical Metropolis dynamics and its variants. The problem of sampling in the low-temperature regime appears in a variety of application domains ranging from molecular dynamics \cite{LRS10}, Bayesian inference \cite{RR04,Robert2004} to computational physics \cite{DM13}. In addition, this setup is also utilized in stochastic optimization \cite{C04} as the Gibbs distribution $\pi^0_{\beta}$ in the low-temperature regime concentrates around the set of global minima of $\mcH$. We are interested in characterizing the time complexity, or the so-called mixing time of the classical Metropolis dynamics, as it gives practical guidance in the time it takes to simulate the chain until one approximately samples from $\pi^0_{\beta}$ or obtains an approximate minimizer of $\mcH$. Throughout this paper, we say that a mixing or tunneling time parameter $t$ is a \textbf{torpid} parameter if $t$ is at least exponential in $N^{\varepsilon}$ for some $\varepsilon > 0$. On the other hand, we say that $t$ is a \textbf{rapid} parameter if $t$ is at most polynomial in the system size $N$. Note that the notion of torpid versus rapid mixing has been studied intensively in the literature in a wide range of sophisticated statistical physics models (such as the Potts model, the Blume-Emery-Griffiths model or the Generalized Random Energy Model) and advanced Markov chain Monte Carlo algorithms (such as parallel tempering or the swapping algorithm, simulated tempering, Swendsen-Wang dynamics and the equi-energy sampler), see for example \cite{EKLV14, MZ03,WSH09a,WSH09b, BCFKTVV99, EL09,GJ99,BR16} and the references therein. 

While the classical Metropolis-Hastings algorithm is known to mix efficiently on unimodal target distribution in the Euclidean space \cite{JS18}, one of the culprits that leads to torpid mixing of the Metropolis dynamics on a possibly high-dimensional and non-convex landscape specified by the target Hamiltonian $\mcH$ is that its relaxation time grows at least in the order of $e^{\beta m}$ in the low-temperature regime \cite{HS88}, where $\beta$ is the inverse temperature and $m$ is the so-called critical or saddle height of the landscape (see \eqref{eq:m} below for the definition of $m$). In many spin models of interest, $m$ grows at least in the order of the system size $N$. Coupled with an exponential dependence on $\beta$, this consequently leads to torpid total variation mixing and relaxation time in the low-temperature setting. For example, in the ferromagnetic Ising model defined on an expander graph, $m$ grows at least in the order of $N$ \cite{DHJN17}. As another example, with high probability $m$ is shown to grow at least in the order of $N$ in Derrida's Random Energy Model in Section \ref{subsec:REM}.

To possibly overcome this bottleneck of exponential dependence on both $\beta$ and $m$ in these mixing and tunneling time parameters, one natural approach is to transform the landscape suitably to hopefully reduce the magnitude of $m$. Originated from the study of overdamped Langevin diffusion with state-dependent noise \cite{FQG97}, it is shown in \cite{C21} that the effect of state-dependent noise is de-facto equivalent to altering the target Hamiltonian from $\mcH$ to
\begin{align*}
	\mathcal{H}^{f}_{\alpha,c,1/\beta}(\sigma) := \int^{\mcH(\sigma)}_{\mcH_{*}} \dfrac{1}{\alpha f((u-c)_+) + 1/\beta}\, du,
\end{align*}
where $\sigma \in \Sigma_N$, $\mcH_{*} := \min \mcH$ and for any real number $a$, we use $a_+ := \max\{a,0\}$ to denote the non-negative part. We assume that the parameters $f,\alpha,c$ are chosen according to Assumption \ref{assump:fc} below. With appropriate tuning of these parameters, one significant advantage of targeting $\mcH^f_{\alpha,c,1/\beta}$ is that the relaxation time of the Metropolis dynamics on the modified landscape is at most of the order of $N^3$ (see Theorem \ref{thm:main} item \eqref{it:rapidrelax} below) for any $\mcH$ on $\Sigma_N$ that consequently gives rise to rapid mixing. The crux to this proof is that upon choosing $f(x) = f_2(x) := x^2$, $\alpha = \beta$, the modified Hamiltonian is given by
$$\mcH^{f_2}_{\beta,c,1/\beta}(\sigma) = \beta (\mcH(\sigma) \wedge c - \mcH_{*}) + \arctan(\beta(\mcH(\sigma)-c)_+),$$
where we write $a \wedge b = \min\{a,b\}$ for $a,b \in \mathbb{R}$. If the saddle height is attained at $\mcH^{f_2}_{\beta,c,1/\beta}(\sigma) - \mcH^{f_2}_{\beta,c,1/\beta}(\eta)$ and $c$ is chosen to be close enough to $\mcH_*$, the linear term in $\beta$ is cancelled and only the $\arctan$ term remains which can be upper bounded by $\pi/2$. The rest of the work is to ensure that the prefactor in front of the exponential term in the relaxation time upper bound is at most polynomial in both $N$ and $\beta$.

While there is perhaps the benefit of reduced critical height by running algorithms on the modified landscape, we are nonetheless introducing bias to the dynamics as we are afterall targeting $\pi^f_{\alpha,c,1/\beta}$ instead of the original Gibbs distribution $\pi^0_{\beta}$ in the Metropolis dynamics. However, when the temperature is low enough, the total variation distance between $\pi^{f}_{\alpha,c,1/\beta}$ and $\pi^0_{\beta}$ is small as both distributions can be shown to concentrate on the set of global minima of $\mcH$, and hence $\pi^{f}_{\alpha,c,1/\beta}$ is a reasonably accurate proxy or surrogate of $\pi^0_{\beta}$.

We proceed to give an informal presentation of the main results below:

\begin{theorem}[Informal presentation of Theorem \ref{thm:main}]
	Assume that $\mcH$ has a unique global minimum, and the parameters $f = f_2$, $\alpha = \beta$ and $c$ are tuned appropriately. For low enough temperature with
	$$\beta = \dfrac{1}{c - \mcH_*} \Theta(N),$$
	\begin{enumerate}
		\item(Torpid mixing and tunneling of the original Metropolis dynamics)
		The mixing and tunneling time parameters $t^0$ of the original Metropolis dynamics that targets $\pi^0_{\beta}$ are torpid, where 
		$$t^0 \in \bigg\{t^0_{rel}, t^0_{mix}, \sup_{\sigma \in \mathcal{LM}} \mathbb{E}_{\sigma}(\tau^0_{\sigma_*})\bigg\}$$ 
		can be any of the relaxation time $t^0_{rel}$, total variation mixing time $t^0_{mix}$ or the mean tunneling time to the global minimum $\sigma_*$ initiated from the worst starting state among the local minima $\sup_{\sigma \in \mathcal{LM}} \mathbb{E}_{\sigma}(\tau^0_{\sigma_*})$.
		
		\item(Rapid mixing and tunneling of the proposed Metropolis dynamics on the modified landscape)
		The mixing and tunneling time parameters $t^{f_2}$ of the Metropolis dynamics that targets $\pi^{f_2}_{\beta,c,1/\beta}$ are rapid, where
		$$t^{f_2} \in \bigg\{t^{f_2}_{rel}, t^{f_2}_{mix}, \sup_{\sigma \in \mathcal{LM}} \mathbb{E}_{\sigma}(\tau^{f_2}_{\sigma_*})\bigg\}$$ 
		can be any of the relaxation time $t^{f_2}_{rel}$, total variation mixing time $t^{f_2}_{mix}$ or the mean tunneling time to the global minimum initiated from the worst starting state among the local minima $\sup_{\sigma \in \mathcal{LM}} \mathbb{E}_{\sigma}(\tau^{f_2}_{\sigma_*})$. Note that in the definition of $t^{f_2}_{mix}$ below in Section \ref{sec:spinlandmod} we are measuring the total variation distance between the transition semigroup of the Metropolis dynamics with $\pi^0_{\beta}$ instead of $\pi^{f_2}_{\beta,c,1/\beta}$.
	\end{enumerate}
\end{theorem}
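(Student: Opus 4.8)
The plan is to treat the two items separately, with most of the effort concentrated in item~(2), and within item~(2) to reduce the mixing- and tunneling-time assertions to one relaxation-time bound on the modified chain.

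For item~(1) I would invoke the classical lower bound recalled in the introduction: for Metropolis dynamics at inverse temperature $\beta$ targeting $\pi^0_\beta$, one has $t^0_{rel}\geq C_1 e^{\beta m}$ with $C_1$ depending only on $\Sigma_N$ and $m$ the critical height of \eqref{eq:m} (see \cite{HS88}). Since $\mcH$ is multimodal, $m$ is bounded below by a positive constant, so the scaling $\beta=\Theta(N)/(c-\mcH_*)$ gives $\beta m=\Omega(N)$ and hence torpidity of $t^0_{rel}$. Torpidity of $t^0_{mix}$ then follows from the standard comparison $t^0_{mix}\geq(t^0_{rel}-1)\log\tfrac{1}{2\varepsilon}$, and torpidity of $\sup_{\sigma\in\mathcal{LM}}\mathbb{E}_\sigma(\tau^0_{\sigma_*})$ from the Freidlin--Wentzell / potential-theoretic lower bound on the mean time to descend to $\sigma_*$ from a local minimum whose depth realizes $m$, which is again of order $e^{\beta m}$.

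For item~(2), the first step is to use the explicit form $\mcH^{f_2}_{\beta,c,1/\beta}(\sigma)=\beta(\mcH(\sigma)\wedge c-\mcH_*)+\arctan(\beta(\mcH(\sigma)-c)_+)$ recorded above. Being a strictly increasing function of $\mcH(\sigma)$, this transformation leaves the combinatorial structure of minimal-energy paths on $\Sigma_N$ intact while ``clipping'' the landscape into the band $[\beta(c-\mcH_*),\,\beta(c-\mcH_*)+\pi/2]$ on $\{\mcH>c\}$. I would then tune $c$ close enough to $\mcH_*$ that every non-global local minimum $\eta$, together with every configuration on an optimal path joining $\eta$ to $\sigma_*$ that lies outside the valley of $\sigma_*$, satisfies $\mcH>c$; in the communication-height difference governing the depth of $\eta$ the two linear-in-$\beta$ contributions $\beta(c-\mcH_*)$ cancel, leaving only a difference of $\arctan$ terms, so the critical height $m^{f_2}$ of the modified landscape is at most $\pi/2=\mathcal{O}(1)$.

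The core of item~(2), and the step I expect to be the main obstacle, is the relaxation-time estimate $t^{f_2}_{rel}\leq\Gamma(N,\beta)\,e^{m^{f_2}}\leq\Gamma(N,\beta)\,e^{\pi/2}$ obtained from a Holley--Stroock / canonical-path bound applied to the modified chain: the whole difficulty is to show that the prefactor $\Gamma(N,\beta)$ is polynomial in $N$ and $\beta$ rather than exponential in $N$. The idea is to route the canonical paths through the flattened high-energy band, on which single-flip transition probabilities are bounded below by $\tfrac{1}{N}e^{-\pi/2}$, and to exhibit the cancellation of the common factor $e^{-\beta(c-\mcH_*)}$ between the edge conductances traversed near a saddle and the stationary mass of the relevant valleys, so that only polynomial combinatorial factors and the $\mathcal{O}(1)$ barrier $e^{\pi/2}$ survive; tracking these factors should yield $\Gamma(N,\beta)=\mathcal{O}(N^3)$ and hence $t^{f_2}_{rel}=\mathcal{O}(N^3)$, which is rapid because $\beta=\Theta(N)$. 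Given this, $\sup_{\sigma\in\mathcal{LM}}\mathbb{E}_\sigma(\tau^{f_2}_{\sigma_*})$ follows by running the same prefactor bookkeeping through the potential-theoretic formula for the mean tunneling time with $m^{f_2}\leq\pi/2$, and for $t^{f_2}_{mix}$ --- measured, as in the remark after the statement, against $\pi^0_\beta$ --- I would use the triangle inequality $\|P^{f_2}_t(\sigma,\cdot)-\pi^0_\beta\|_{TV}\leq\|P^{f_2}_t(\sigma,\cdot)-\pi^{f_2}_{\beta,c,1/\beta}\|_{TV}+\|\pi^{f_2}_{\beta,c,1/\beta}-\pi^0_\beta\|_{TV}$: the first term is at most $e^{-\lfloor t/t^{f_2}_{rel}\rfloor}/\sqrt{\pi^{f_2}_{\min}}$ and, since $\log(1/\pi^{f_2}_{\min})=\mathcal{O}(N)$, falls below $\varepsilon$ after $\mathcal{O}(N\,t^{f_2}_{rel})=\mathrm{poly}(N)$ steps, while the second term vanishes because at $\beta=\Theta(N)/(c-\mcH_*)$ both distributions concentrate on $\sigma_*$ (the mass outside $\sigma_*$ being $e^{-\Omega(N)}$ once $\min_{\sigma\neq\sigma_*}(\mcH(\sigma)-\mcH_*)$ is bounded below). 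Hence $t^{f_2}_{mix}=\mathrm{poly}(N)$, completing item~(2).
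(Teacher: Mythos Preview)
Your outline is correct and matches the paper's strategy closely: Holley--Stroock lower bounds for item~(1), a canonical-path spectral-gap bound with $m^{f_2}\leq\pi/2$ for the relaxation time in item~(2), the triangle inequality plus $\log(1/\pi^{f_2}_{\min})=\mathcal{O}(N)$ for the mixing time, and concentration of both Gibbs measures on $\sigma_*$. Two implementation details differ from the paper and are worth flagging. First, the $\mathcal{O}(N^3)$ prefactor does not come from ``routing through the flattened band'' but from a purely combinatorial choice of canonical paths: the paper fixes, for every pair $(\eta,\sigma)$, the path that flips the differing coordinates left to right (length $\leq N$), and then invokes the hypercube congestion bound of Diaconis--Stroock \cite{DS91} to get the factor $N^2/2$; the energy landscape enters only through the single $e^{m^{f_2}}\leq e^{\pi/2}$ factor, so no cancellation bookkeeping of $e^{-\beta(c-\mcH_*)}$ along paths is needed. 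Second, for the tunneling time the paper does not rerun the potential-theoretic machinery but instead observes that $\pi^{f_2}_{\beta,c,1/\beta}(\sigma_*)>1/2$ at low temperature and applies the equivalence between mixing time and worst-case mean hitting time of ``large'' sets due to Oliveira and Peres--Sousi, which immediately gives $\sup_{\sigma}\mathbb{E}_\sigma(\tau^{f_2}_{\sigma_*})=\mathcal{O}(\mathbf{t}^{f_2}_{mix})$; this is shorter than your proposed capacity estimate, though yours would also go through.
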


We refer readers to Section \ref{sec:spinlandmod} for the precise definition of the above mixing and tunneling time parameters, and to Section \ref{subsec:asympnotations} where we introduce the asymptotic notations. In the second main result, we prove long-time convergence of simulated annealing on the modified landscape using a power-law annealing schedule:

\begin{theorem}[Informal presentation of Theorem \ref{thm:simanneal}]
	Assume that $\mcH$ has a unique global minimum, and the parameters $f = f_2$, $\alpha_t = \beta_t$ and $c$ are tuned appropriately. Let $a \in (0,1)$ and for $t \geq 0$ under the following power-law cooling schedule,
	$$\beta_t = t^a,$$
	the simulated annealing chain converges to the ground state of $\mcH$ as $t \to \infty$.
\end{theorem}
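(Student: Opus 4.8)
The plan is to adapt the classical time-inhomogeneous Markov chain analysis of simulated annealing (in the spirit of Holley--Kushner--Stroock and Hajek) to the \emph{modified} landscape, exploiting that the modified Hamiltonian $\mcH^{f_2}_{\beta_t,c,1/\beta_t}$ has an $\mathcal{O}(1)$ critical height rather than one growing with $N$. First I would set up the time-inhomogeneous Metropolis generator $L_t$ associated with $\pi^{f_2}_{\beta_t,c,1/\beta_t}$ under the schedule $\beta_t = t^a$, and record the key structural facts carried over from Theorem \ref{thm:main}: the modified landscape shares the stationary points of $\mcH$, and its critical height $m^{f_2}$ (suitably defined with $c$ tuned close to $\mcH_*$) is bounded by a constant, essentially $\pi/2$, independent of $N$ and of $\beta_t$. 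The target family $\pi^{f_2}_{\beta_t,c,1/\beta_t}$ still concentrates on the unique global minimum $\sigma_*$ as $\beta_t \to \infty$, so it suffices to show the annealing chain tracks this moving target.

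The core step is a Poincaré/log-Sobolev-type estimate for the frozen chain: for each fixed $t$, the spectral gap of the Metropolis dynamics on the modified landscape satisfies a lower bound of the form $\lambda(\beta_t) \gtrsim \mathrm{poly}(N,\beta_t)^{-1} e^{-m^{f_2}}$, which by the $\mathcal{O}(1)$ critical height is $\gtrsim \mathrm{poly}(N,\beta_t)^{-1}$ — this is exactly the content already extracted in Theorem \ref{thm:main} item \eqref{it:rapidrelax}. I would then invoke the standard comparison between the time-inhomogeneous semigroup and the instantaneous stationary distributions: writing $h_t$ for the density of the law of the chain at time $t$ with respect to $\pi^{f_2}_{\beta_t,c,1/\beta_t}$, a Gronwall argument on $\frac{d}{dt}\mathrm{Var}_{\pi_t}(h_t)$ (or the relative entropy) gives decay governed by $\exp(-\int_0^t \lambda(\beta_s)\,ds)$, against which one must control the drift term coming from $\frac{d}{dt}\pi^{f_2}_{\beta_t,c,1/\beta_t}$, i.e. the speed at which the target moves. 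Because $\beta_t = t^a$ with $a\in(0,1)$, we have $\dot\beta_t = a t^{a-1}$ decaying in $t$, while $\int_0^t \lambda(\beta_s)\,ds$ grows (only polynomially slowly, but it grows); choosing $a$ small enough — and this is where the precise polynomial degree of the gap bound in $\beta$ enters — makes the mixing integral dominate the cumulative drift, so the two-sided bound forces $h_t \to \1$ in $L^2(\pi_t)$, hence convergence of the annealing chain to $\sigma_*$.

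The main obstacle I expect is the bookkeeping in the Gronwall estimate: one needs an explicit and correctly-signed bound on the logarithmic derivative $\partial_{\beta}\log \pi^{f_2}_{\beta,c,1/\beta}(\sigma)$ uniformly in $\sigma\in\Sigma_N$, since $\beta$ appears in $\mcH^{f_2}_{\beta,c,1/\beta}$ both through the $\alpha=\beta$ prefactor and through the $1/\beta$ additive term, so the dependence is more delicate than in the classical $e^{-\beta\mcH}$ case. One must verify that this derivative is at most polynomial in $N$ and $\beta$ (using the explicit form $\mcH^{f_2}_{\beta,c,1/\beta}=\beta(\mcH\wedge c - \mcH_*)+\arctan(\beta(\mcH-c)_+)$ and boundedness of $\mcH$ on $\Sigma_N$), so that the accumulated perturbation $\int_0^t \|\partial_s \log\pi_s\|_\infty\,ds$ remains $o$ of the accumulated spectral-gap integral. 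A secondary technical point is translating $L^2(\pi_t)$-closeness into the statement ``converges to the ground state of $\mcH$'': here I would use that $\pi^{f_2}_{\beta_t,c,1/\beta_t}(\{\sigma_*\})\to 1$, which follows from the uniqueness of the global minimum and the monotonicity of $\mcH^{f_2}_{\beta_t,c,1/\beta_t}$ in $\mcH$, together with a Cauchy--Schwarz step to pass from variance decay to total-variation convergence of the annealing law to $\delta_{\sigma_*}$.
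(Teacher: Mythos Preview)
Your approach is essentially the paper's: both track the $\chi^2$-divergence $F(t)=\mathrm{Var}_{\pi_t}(h_t)$ via a Gronwall-type differential inequality $\frac{d}{dt}F\le(-2\lambda_2(\beta_t)+\gamma_t)F+\gamma_t$ (the paper cites Gidas and Miclo here), bound the drift term $\gamma_t$ through the logarithmic time-derivative of $\pi^{f_2}_{\beta_t,c,1/\beta_t}$, and finish with Cauchy--Schwarz plus concentration of $\pi_t$ on $\sigma_*$.

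The one point to correct: you write the frozen spectral gap as $\lambda(\beta_t)\gtrsim \mathrm{poly}(N,\beta_t)^{-1}$ and then worry about choosing $a$ small enough so that the mixing integral beats the drift. In fact the gap bound from Lemma~\ref{lem:sglowerbd} is $\lambda_2\ge \frac{2}{N^3}e^{-\pi/2}$, \emph{independent of $\beta_t$} --- this $\beta$-uniformity is precisely what the $\arctan$ transformation buys. Hence $\int_0^t \lambda_2\,ds$ grows linearly in $t$, while the accumulated drift is $\int_0^t\gamma_s\,ds=\beta_t(\mcH_{max}-\mcH_*)=t^a(\mcH_{max}-\mcH_*)$ (the paper's Lemma~\ref{lem:timedpif2} gives exactly $|\partial_t\pi_t(\sigma)|\le \dot\beta_t(\mcH_{max}-\mcH_*)\,\pi_t(\sigma)$, resolving your bookkeeping concern). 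The linear term dominates $t^a$ for \emph{every} $a\in(0,1)$, so no restriction on $a$ is needed and the full statement follows.
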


As an application of the main results, we showcase the effect of landscape modification for sampling in the low-temperature in Derrida's Random Energy Model:

\begin{corollary}[Informal presentation of Corollary \ref{cor:REM}]
	Consider Derrida's Random Energy Model with Hamiltonian $\mcH(\sigma) = - \sqrt{N} X_{\sigma}$, where $(X_{\sigma})$ is an i.i.d. family of normal random variables. By taking $f = f_2$, $\alpha = \beta$ and $c = -N \sqrt{2 \ln 2} + \frac{N^{1/4}}{4}$, with high probability at low enough temperature we have as $N \to \infty$,
	the mixing and tunneling time parameters $t^{f_2}$ of the Metropolis dynamics that targets $\pi^{f_2}_{\beta,c,1/\beta}$ are rapid, where
	$$t^{f_2} \in \bigg\{t^{f_2}_{rel}, t^{f_2}_{mix}, \sup_{\sigma \in \mathcal{LM}}\mathbb{E}_{\sigma}(\tau^{f_2}_{\sigma_*};~\left(X_{\sigma}\right)_{\sigma \in \Sigma_N})\bigg\}$$ 
	can be any of the relaxation time $t^{f_2}_{rel}$, total variation mixing time $t^{f_2}_{mix}$ or the mean tunneling time to the global minimum conditional on the disorder initiated from the worst starting state among the local minima $\sup_{\sigma \in \mathcal{LM}} \mathbb{E}_{\sigma}(\tau^{f_2}_{\sigma_*};~\left(X_{\sigma}\right)_{\sigma \in \Sigma_N})$.
\end{corollary}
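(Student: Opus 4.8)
The plan is to obtain Corollary \ref{cor:REM} as a specialization of Theorem \ref{thm:main}. Conditionally on the disorder $(X_\sigma)_{\sigma \in \Sigma_N}$, the Hamiltonian $\mcH(\sigma) = -\sqrt{N} X_\sigma$ is a fixed function on $\Sigma_N$, so it suffices to produce an event of probability $1 - o(1)$ on which this realization of $\mcH$, together with the prescribed tuning $f = f_2$, $\alpha = \beta$ and $c = -N\sqrt{2\ln 2} + \tfrac14 N^{1/4}$, meets every hypothesis of Theorem \ref{thm:main} in the low-temperature window $\beta = \tfrac{1}{c - \mcH_*}\Theta(N)$; the claimed polynomial bounds then follow by invoking the deterministic estimates of Theorem \ref{thm:main} on that event. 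The qualitative hypothesis is free: since the $X_\sigma$ are i.i.d.\ with a continuous law, almost surely the values $\{\mcH(\sigma)\}_{\sigma \in \Sigma_N}$ are pairwise distinct, so $\mcH$ has an a.s.\ unique global minimizer $\sigma_*$.

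The crux is to locate the random ground-state value $\mcH_* = -\sqrt{N}\, M_N$, with $M_N := \max_{\sigma \in \Sigma_N} X_\sigma$ the maximum of $2^N$ i.i.d.\ standard Gaussians, accurately enough to control $c - \mcH_*$. By the classical extreme-value asymptotics for Gaussian maxima — equivalently, a union bound on the Gaussian upper tail for the upper estimate and a second-moment (Paley--Zygmund) argument for the lower estimate — with probability tending to $1$ one has
\[
 M_N \;=\; \sqrt{2N\ln 2}\;-\;\frac{\ln(4\pi N\ln 2)}{2\sqrt{2N\ln 2}}\;+\;\frac{\Theta(1)}{\sqrt{2N\ln 2}},
\]
so that $\mcH_* = -N\sqrt{2\ln 2} + \Theta(\ln N)$; in particular $-N\sqrt{2\ln 2} \le \mcH_* \le -N\sqrt{2\ln 2} + \tfrac18 N^{1/4}$ for all large $N$ on this event. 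Hence
\[
 c - \mcH_* \;=\; \frac{N^{1/4}}{4} \;-\; \big(\mcH_* + N\sqrt{2\ln 2}\big) \;=\; \frac{N^{1/4}}{4}\,\big(1 + o(1)\big) \;>\; 0,
\]
which shows at once that $c$ lies strictly above $\mcH_*$ (so $\sigma_*$ is inside the region where the modified landscape reproduces $\mcH$ faithfully) and that $c - \mcH_*$ lands in the admissible window demanded by Theorem \ref{thm:main}; the induced temperature scaling is $\beta = \tfrac{1}{c-\mcH_*}\Theta(N) = \Theta(N^{3/4})$.

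It remains to check the quantitative hypotheses of Theorem \ref{thm:main} — those controlling the modified critical height, the polynomial prefactor in the relaxation-time bound, and the lower bound on $\pi^{f_2}_{\beta,c,1/\beta}(\sigma_*)$ (together with $m = \Omega(N)$ for the companion torpid statement about the unmodified dynamics, established in Section \ref{subsec:REM}). For the REM these reduce to further union bounds and extreme-value estimates over $\Sigma_N$: one bounds the number and the excess energies $\mcH(\eta) - \mcH_*$ of the near-ground configurations lying below level $c$, uses that along any single-bit-flip path the intermediate energies are i.i.d.\ with mean $0$ and variance $N$ so that a generic configuration sits at energy $O(\sqrt{N})$, far above $c$, whence every excursion of $\mcH^{f_2}_{\beta,c,1/\beta}$ above level $c$ costs at most $\arctan(\cdot) \le \pi/2$, and controls the count of local minima of $\mcH$ (each configuration being a local minimum with probability $(N+1)^{-1}$). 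The main obstacle is the probabilistic bookkeeping: arranging that \emph{every} structural constant entering Theorem \ref{thm:main} is simultaneously $\mathrm{poly}(N)$ on a single event of probability $1 - o(1)$, uniformly over the starting local minimum in $\mathcal{LM}$ and over $\beta$ in the stated window. Granting this, Theorem \ref{thm:main} yields that $t^{f_2}_{rel}$, $t^{f_2}_{mix}$ and $\sup_{\sigma \in \mathcal{LM}} \mathbb{E}_\sigma(\tau^{f_2}_{\sigma_*}; (X_\sigma)_{\sigma \in \Sigma_N})$ are all polynomial in $N$ and in $\beta = \Theta(N^{3/4})$, hence polynomial in $N$ — indeed $\mathcal{O}(N^4)$ — which is exactly Corollary \ref{cor:REM}.
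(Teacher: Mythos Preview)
Your high-level plan is correct and matches the paper: condition on the disorder, exhibit a high-probability event on which the deterministic hypotheses of Theorem~\ref{thm:main} hold, and read off the polynomial bounds. Your localization $\mcH_* = -N\sqrt{2\ln 2} + \Theta(\ln N)$ agrees with the paper's Lemma~\ref{lem:REMlem1}, equations \eqref{eq:REMlem1} and \eqref{eq:REMlem3}.

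There is, however, a genuine gap. The only nontrivial hypothesis of Theorem~\ref{thm:main} beyond uniqueness of the minimizer is the \emph{two-sided} constraint $c \in (\mcH_*,\,\min_{\sigma \in \mathcal{LM}\cap\mathcal{GM}'}\mcH(\sigma)]$ together with the temperature condition involving $(c-\mcH_*)\wedge \Delta \wedge (m/4)$. You verify $c>\mcH_*$, but you never establish the upper inequality $c \le \min_{\sigma \in \mathcal{LM}\cap\mathcal{GM}'}\mcH(\sigma)$, nor that $c-\mcH_* \le \Delta$. These are precisely what make the $\arctan$ truncation in \eqref{eq:mcHf2} cancel the linear-in-$\beta$ term in $m^{f_2}_{\beta,c,1/\beta}$; without them Theorem~\ref{thm:main} does not apply. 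The paper supplies this via the separate probabilistic input \eqref{eq:REMlem2} of Lemma~\ref{lem:REMlem1}, asserting that w.h.p.\ every pairwise energy gap satisfies $|\mcH(\sigma)-\mcH(\sigma')|\ge N^{1/4}$, which immediately gives $\Delta \ge N^{1/4}$ and $\min_{\sigma \in \mathcal{LM}\cap\mathcal{GM}'}\mcH(\sigma)-\mcH_* \ge N^{1/4}$, whence $c-\mcH_* \le N^{1/4}/4$ sits below all three thresholds (using Lemma~\ref{lem:mestimateREM} for $m$). Your proposal does not isolate this step.

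A second, related issue: your third paragraph misidentifies what remains to be checked. The items you list --- the modified critical height, the polynomial prefactor, the lower bound on $\pi^{f_2}_{\beta,c,1/\beta}(\sigma_*)$, energies along single-bit-flip paths, the count of local minima --- are not hypotheses of Theorem~\ref{thm:main}; they are either conclusions of that theorem or ingredients already absorbed into its model-independent proof (Lemma~\ref{lem:sglowerbd} and the argument of Section~\ref{sec:pfmain}). Once $c$ is shown to lie in the admissible interval and $\beta$ is taken in the stated window, Theorem~\ref{thm:main} delivers the $\mathcal{O}(N^3)$ relaxation and $\mathcal{O}(N^4)$ mixing/tunneling bounds directly, with no further REM-specific analysis of paths or local minima required.
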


Let us remark that the torpid mixing of the Metropolis dynamics in the Random Energy Model is also investigated in \cite{Mathieu2000}.

This paper leverages and takes advantages of the structure and information of the energy landscape, in particular $\mcH_{*}$, the global minimum value of $\mcH$, in the design and analysis of accelerated samplers in the low-temperature regime or approximate optimizers. In a similar vein, we mention Catoni's energy transformation algorithm \cite{Catoni96,Catoni98} that utilizes a lower bound of $\mcH_{*}$ in the design of the accelerated algorithm. We note that the landscape modification Metropolis dynamics can be understood as a state-dependent generalization of Catoni's algorithm when we take $f(x) = x$ and $\alpha = 1$, see the discussion in \cite{C21}. In another related line of work \cite{CZ21}, we investigate the effect of landscape modification on sampling at temperature $T = 1$ by annealing on the parameter $\alpha$. In this work however as far as the main results are concerned we shall be taking $\alpha = \beta$ which is large.

The rest of this paper is organized as follows. In Section \ref{subsec:asympnotations}, we fix some commonly used asymptotic notations. In Section \ref{sec:spinlandmod}, we formally introduce the definition of landscape modification and in particular $\mcH^f_{\alpha,c,1/\beta}$. We also recall some familiar mixing and tunneling time notions in the study of Markov chain Monte Carlo as well as the notions of critical height on both the original and the modified landscape. The two main results, with one concerning the fixed temperature case while the second one discusses the annealing case, are presented in Section \ref{sec:main}, followed by their proofs in Section \ref{sec:pfmain}. Finally, in Section \ref{sec:applications}, we apply the results to sampling in the low-temperature or finding an approximate ground state for the Ising model on the complete graph, the random $r$-regular graph, the Erd\H{o}s-R\'{e}nyi random graph and Derrida's Random Energy Model.

\subsection{Notations}\label{subsec:asympnotations}

In this subsection, we recall a few standard asymptotic notations that are used throughout the paper. Suppose that we have two real-valued functions $g,h$. We write $g(N) = \mathcal{O}(h(N))$ if there exists a positive constant $M$ such that for all large enough $N$ we have $|g(N)| \leq M h(N)$. We also write $g(N) = \Omega(h(N))$ if there exists a positive constant $M$ such that for all large enough $N$ we have $|g(N)| \geq M h(N)$. If both $g(N) = \mathcal{O}(h(N))$ and $g(N) = \Omega(h(N))$ hold, then we write $g(N) = \Theta(h(N))$. Finally, we write $g(N) \overset{N \to \infty}{\sim} h(N)$ if $\lim_{N \to \infty} g(N)/h(N) = 1$.

%
%
%
%
%

\section{Spin systems with landscape modification}\label{sec:spinlandmod}

In this paper, the primary objective is to design accelerated samplers (i.e. provably rapidly mixing) to approximately sample from the Gibbs distribution $\pi^0_{\beta} \propto e^{-\beta \mcH}$ in the low-temperature regime when the temperature $T = 1/\beta$ is small. While there are various advanced stochastic sampling algorithms to perform such task (as mentioned in the Introduction in Section \ref{sec:intro}), we are particularly interested in the classical Metropolis dynamics. Consider a simple random walk proposal with transition matrix $P^{SRW} = (P^{SRW}(\eta,\sigma))_{\eta,\sigma \in \Sigma_N}$ given by
\begin{align*}
	P^{SRW}(\eta,\sigma) &:= \dfrac{1}{N} \1_{\{\textrm{there exists } i \textrm{ such that } \eta(i) = -\sigma(i) \textrm{ and } \eta(j) = \sigma(j) \textrm{ for all }j \neq i\}}.
\end{align*}
The stationary distribution of $P^{SRW}$ is the uniform distribution on $\Sigma_N$ given by $\mu^{SRW}(\sigma) := 1/2^N$. For the Metropolis dynamics, we instead target the Gibbs distribution with a landscape-modified Hamiltonian function given by, for any spin configuration $\sigma = (\sigma(i))_{i=1}^N \in \Sigma_N$,
\begin{align*}
	\mathcal{H}^{f}_{\alpha,c,1/\beta}(\sigma) = \int^{\mcH(\sigma)}_{\mcH_{*}} \dfrac{1}{\alpha f((u-c)_+) + 1/\beta}\, du,
\end{align*}
where we recall from the Introduction in Section \ref{sec:intro} that $\mcH_{*} = \min \mcH$ and $a_+ = \max\{a,0\}$ is the non-negative part of a real number $a$. We also write $\mcH_{max} := \max_{\sigma \in \Sigma_N} \mcH(\sigma)$ throughout this paper. The function $f$ describes the effect of the transformation above the threshold parameter $c$, while the parameter $\alpha$ controls the magnitude of the transformation. These parameters are chosen to satisfy the following assumption:
\begin{assumption}\label{assump:fc}
	$f : \mathbb{R}^+ \to \mathbb{R}^+$ is non-negative, non-decreasing and satisfies $f(0) = 0$. $c$ is chosen such that $\mcH_{max} \geq c \geq \mcH_{*}$. $\alpha$ is assumed to be non-negative, i.e. $\alpha \geq 0$.
\end{assumption}
In our main results in Section \ref{sec:main} below we will primarily be interested in $f(x) = f_2(x) := x^2$, the quadratic function, and some other common choices of $f$ include the linear function $f(x) = x$ and the square root function $f(x) = \sqrt{x}$. We note that $\mcH^{f_2}_{\alpha,c,1/\beta}$ can be written as
\begin{align}\label{eq:mcHf2alphabeta}
	\mcH^{f_2}_{\alpha,c,1/\beta} (\sigma) = \beta (c\wedge \mcH(\sigma)-\mcH_{*}) + \sqrt{\dfrac{\beta}{\alpha}} \arctan(\sqrt{\alpha \beta}(\mcH(\sigma)-c)_+).
\end{align}
For the explicit expressions of $\mcH^f_{\alpha,\beta,1/\beta}$ for other choices of $f$, we refer readers to the calculation in \cite{C21}. We will explain the benefits of $\mcH^{f_2}_{\beta,c,1/\beta}$ over other similar transformations or modifications in Section \ref{sec:main}.

With these notations in mind, the continuized Metropolis dynamics is a Markov chain that we denote by $X^{f}_{\alpha,c,1/\beta} = (X^{f}_{\alpha,c,1/\beta}(t))_{t \geq 0}$ with infinitesimal generator $L^{f}_{\alpha,c,1/\beta} = (L^{f}_{\alpha,c,1/\beta}(\eta,\sigma))_{\eta,\sigma \in \Sigma_N}$ defined to be
\begin{align}\label{eq:Llm}
	L^{f}_{\alpha,c,1/\beta}(\eta,\sigma) := \begin{cases}  P^{SRW}(\eta,\sigma) e^{-(\mathcal{H}^{f}_{\alpha,c,1/\beta}(\sigma)-\mathcal{H}^{f}_{\alpha,c,1/\beta}(\sigma_1))_+}, &\mbox{if } \eta \neq \sigma; \\
		- \sum_{\xi: \xi \neq \eta} L^{f}_{\alpha,c,1/\beta}(\eta,\xi), & \mbox{if } \eta = \sigma, \end{cases}
\end{align}
and stationary distribution given by a generalized Gibbs distribution:
$$\pi^{f}_{\alpha,c,1/\beta}(\sigma) = \dfrac{e^{-\mcH^f_{\alpha,c,1/\beta}(\sigma)} \mu^{SRW}(\sigma)}{\sum_{\sigma \in \Sigma_N} e^{-\mcH^f_{\alpha,c,1/\beta}(\sigma)} \mu^{SRW}(\sigma)} =: \dfrac{e^{-\mcH^f_{\alpha,c,1/\beta}(\sigma)}}{Z^f_{\alpha,c,1/\beta}}\propto e^{-\mcH^f_{\alpha,c,1/\beta}(\sigma)}.$$
Note that in the special case when we take $f \equiv 0$ or $\alpha \equiv 0$, we recover the classical Gibbs distribution at inverse temperature $\beta$ and Hamiltonian $\mcH$ since $\pi^0_{\beta} = \pi^0_{\alpha,c,1/\beta} = \pi^f_{0,c,1/\beta}$. As such without overburdening our notations we write $X^0_{\beta} \equiv X^0_{\alpha,c,1/\beta}$, $L^0_{\beta} \equiv L^0_{\alpha,c,1/\beta}$ and $Z^0_{\beta} \equiv Z^0_{\alpha,c,1/\beta}$.

Before we introduce the list of mixing and tunneling time parameters of both $X^f_{\alpha,c,1/\beta}$ and $X^0_{\beta}$, we shall fix and recall some classical notions in Markov chain mixing. For any two probability measures $\mu, \nu$ on $\Sigma_N$, the total variation distance is given by $\norm{\mu - \nu}_{TV} := 1/2 \sum_{\sigma \in \Sigma_N} |\mu(\sigma) - \nu(\sigma)|$. For $A \subset \Sigma_N$, we also write $\tau_A^f := \inf\left\{t \geq 0;~X^f_{\alpha,c,1/\beta}(t) \in A\right\}$ to be the first hitting time of $A$ by $X^f_{\alpha,c,1/\beta}$, and we denote the hitting time to a single state by $\tau^f_{\sigma} = \tau^f_{\{\sigma\}}$. The usual convention of $\inf \emptyset = \infty$ applies. We say that $\sigma \in \Sigma_N$ is a local minimum of $\mcH$ if for all $i \in \llbracket N \rrbracket := \{1,2,\ldots,N\}$, we have $\mcH(\sigma^{(i)}) \geq \mcH(\sigma)$, where $\sigma^{(i)}_j = \sigma_j$ for all $j \neq i$ and $\sigma^{(i)}_i = -\sigma_i$. Intuitively, $\sigma$ is a local minimum if flipping the sign of any of the $N$ coordinates does not decrease the value of the Hamiltonian. We also endow the Hilbert space $\ell^2(\pi^f_{\alpha,c,1/\beta})$ with the usual inner product $\langle u,v \rangle_{\pi^f_{\alpha,c,1/\beta}} := \sum_{\sigma \in \Sigma_N} u(\sigma)v(\sigma) \pi^f_{\alpha,c,1/\beta}(\sigma)$, where $u, v \in \ell^2(\pi^f_{\alpha,c,1/\beta})$ are real-valued functions defined on $\Sigma_N$.

Given $\varepsilon > 0$, we introduce the following list of parameters of interest. We emphasize that the dependency on $\alpha,c,\beta$ maybe suppressed in the notations when there is no ambiguity. 
\begin{itemize}
	\item(Total variation mixing time to $\pi^0_{\beta}$ by $X^f_{\alpha,c,1/\beta}$ (resp.~$X^0_{\beta}$) on the modified (resp.~original) landscape)
	\begin{align*}
		t^f_{mix}(\varepsilon) &:= \inf\bigg\{t \geq 0;~ \sup_{\sigma \in \Sigma_N} \norm{(P^f_{\alpha,c,1/\beta})^t(\sigma,\cdot)-\pi^0_{\beta}}_{TV} \leq \varepsilon\bigg\}. \\
		t^0_{mix}(\varepsilon) &:= \inf\bigg\{t \geq 0;~ \sup_{\sigma \in \Sigma_N} \norm{(P^0_{\beta})^t(\sigma,\cdot)-\pi^0_{\beta}}_{TV} \leq \varepsilon\bigg\}.
	\end{align*}
	Note that we are \textit{not} measuring $\norm{(P^f_{\alpha,c,1/\beta})^t(\sigma,\cdot)-\pi^f_{\alpha,c,1/\beta}}_{TV}$ in the definition of $t^f_{mix}(\varepsilon)$. By convention in the literature we write $t^f_{mix} = t^f_{mix}(1/4)$ and $t^0_{mix} = t^0_{mix}(1/4)$.

	\item(Relaxation time of $X^f_{\alpha,c,1/\beta}$ (resp.~$X^0_{\beta}$) on the modified (resp.~original) landscape)
	First, we recall that the second smallest eigenvalue of $-L^f_{\alpha,c,1/\beta}$ and $-L^0_{\beta}$ are defined respectively to be
	\begin{align*}
		\lambda_2(-L^f_{\alpha,c,1/\beta}) :&= \inf_{g;~ \pi^f_{\alpha,c,1/\beta}(g) = 0} \dfrac{\langle -L^f_{\alpha,c,1/\beta}g,g \rangle_{\pi^f_{\alpha,c,1/\beta}}}{\langle g,g \rangle_{\pi^f_{\alpha,c,1/\beta}}}, \\
		\lambda_2(-L^0_{\beta}) :&= \inf_{g;~ \pi^0_{\beta}(g) = 0} \dfrac{\langle -L^0_{\beta}g,g \rangle_{\pi^0_{\beta}}}{\langle g,g \rangle_{\pi^0_{\beta}}}.
	\end{align*}
	Their respective relaxation times are then
	\begin{align*}
		t^f_{rel} :&= \dfrac{1}{\lambda_2(-L^f_{\alpha,c,1/\beta})}, \quad
		t^0_{rel} := \dfrac{1}{\lambda_2(-L^0_{\beta})}.
	\end{align*}
	
	\item(Largest mean tunneling time from a local minimum to a global minimum)
	Let $\mathcal{LM} := \{\sigma \in \Sigma_N;~ \mathcal{H}(\sigma^{(i)}) \geq \mathcal{H}(\sigma) \quad \mathrm{for}\,\, i = 1,\ldots,N\}$ be the set of local minima of $\mcH$ and suppose that $\sigma_* \in \mathcal{GM} := \{\sigma \in \Sigma_N;~\mathcal{H}(\sigma) = \mathcal{H}_*\}$ is a global minimum. We would be interested in estimating both
	\begin{align*}
		\sup_{\sigma \in \mathcal{LM}} &\mathbb{E}_{\sigma}(\tau^f_{\sigma_*}), \quad 
		\sup_{\sigma \in \mathcal{LM}} \mathbb{E}_{\sigma}(\tau^0_{\sigma_*}).
	\end{align*}

	\item(First time reaching $\mcH_*$ with high probability) We shall be comparing
	\begin{align*}
		\mathcal{T}^f(\varepsilon) = \inf\bigg\{t \geq 0;~ \inf_{\sigma \in \Sigma_N}\mP_{\sigma}(\mcH(X^f_{\alpha,c,1/\beta}(t)) = \mcH_*) &\geq 1 - \varepsilon\bigg\}, \\
		\mathcal{T}^0(\varepsilon) = \inf\bigg\{t \geq 0;~ \inf_{\sigma \in \Sigma_N}\mP_{\sigma}(\mcH(X^0_{\beta}(t)) = \mcH_*) &\geq 1 - \varepsilon\bigg\}.
	\end{align*}
\end{itemize}

We proceed to introduce two critical heights: $m^f_{\alpha,c,1/\beta}$ and $m$. These constants play an important role in understanding our main results in Section \ref{sec:main}, and in particular in bounding the spectral gap of the dynamics. A path from $\eta$ to $\sigma$ is any sequence of spin configurations starting from $\sigma_1 = \eta, \sigma_2,\ldots, \sigma_n = \sigma$ such that $P^{SRW}(\sigma_{i-1},\sigma_i) > 0$ for $i = 2,\ldots,n$. For any $\eta \neq \sigma$, such path exists as the proposal chain $P^{SRW}$ is irreducible. We write $\Gamma^{\eta,\sigma}$ to be the set of paths from $\eta$ to $\sigma$, and elements of $\Gamma^{\eta,\sigma}$ are denoted by $\gamma = (\gamma_i)_{i=1}^n$. By considering $\mcH(\sigma)$ as the elevation at $\sigma$, the highest elevation along a path $\gamma \in \Gamma^{\eta,\sigma}$ is defined as
$$\mathrm{Elev}(\gamma) := \max\{\mcH(\gamma_i);~\gamma_i \in \gamma\},$$ 
and the least highest elevation from $\eta$ to $\sigma$ is 
\begin{align}\label{eq:leasthighestelev}
	H(\eta,\sigma) := \min\{\mathrm{Elev}(\gamma);~\gamma \in \Gamma^{\eta,\sigma}\}.
\end{align}
The classical critical height $m$ is then defined to be
\begin{align}
	m = m(P^{SRW},\mcH) &:= \max_{\eta,\sigma \in \Sigma_N}\{H(\eta,\sigma) - \mcH(\eta) - \mcH(\sigma)\} + \mcH_{*}. \label{eq:m} 
\end{align} 

We also consider $\Lambda^{\eta,\sigma}$ to be the set of paths from $\eta$ to $\sigma$ by flipping the coordinates in $\{i;~\eta_i \neq \sigma_i\}$ one at a time and from left to right, and the element of $\Lambda^{\eta,\sigma}$ is denoted by $\lambda = (\lambda_i)_{i=1}^n$. Note that $n = n(\eta,\sigma) \leq N$ by definition. The highest elevation along a path $\lambda \in \Lambda^{\eta,\sigma}$ is 
$$\mathrm{Elev}^f_{\alpha,c,1/\beta}(\eta,\sigma) := \max\{\mcH^f_{\alpha,c,1/\beta}(\lambda_i);~\lambda_i \in \lambda\}.$$ 
The critical height $m^f_{\alpha,c,1/\beta}$ is defined to be
\begin{align}
	m^f_{\alpha,c,1/\beta} = m^f_{\alpha,c,1/\beta}(P^{SRW},\mcH) &:= \max_{\eta,\sigma \in \Sigma_N}\{\mathrm{Elev}^f_{\alpha,c,1/\beta}(\eta,\sigma) - \mcH^f_{\alpha,c,1/\beta}(\eta) - \mcH^f_{\alpha,c,1/\beta}(\sigma)\}. \label{eq:rch} 
\end{align} 
Note that $m^f_{\alpha,c,1/\beta}$ does not necessarily reduce to $m$ when we take $f = 0$ or $\alpha = 0$ since the way we pick the paths in $\Gamma^{\eta,\sigma}$ and $\Lambda^{\eta,\sigma}$ are different. In fact we see that $\Lambda^{\eta,\sigma} \subseteq \Gamma^{\eta,\sigma}$.

\section{Main results}\label{sec:main}

In this section, we state the main results in Theorem \ref{thm:main} and Theorem \ref{thm:simanneal}. In Theorem \ref{thm:main}, we consider the setting of fixed but small enough temperature. Thus the results are particularly useful in the context of approximately sampling from the Gibbs distribution $\pi^0_{\beta}$ in the low-temperature or stochastic optimization of the Hamiltonian function $\mcH$. On the other hand in Theorem \ref{thm:simanneal} we investigate the setting of simulated annealing which the temperature cools down according to an annealing schedule. With suitable choices of the landscape-modification parameters such as $f, c$ and $\alpha$, we prove that one can operate an accelerated annealing schedule, namely the power-law schedule $(t^{-a})_{t \geq 0}$ with $a \in (0,1)$, that converges to zero faster than the typical logarithmic annealing. We emphasize that these results hold for all target Hamiltonian functions defined on $\Sigma_N$ that satisfy certain regularity conditions. These conditions are readily checked in all models to be presented in Section \ref{sec:applications}.

The central intuition to the proofs of these two main results is that we choose $f = f_2$, $\alpha = \beta$, $c \in (\mathcal{H}_*,\min_{\sigma \in \mathcal{LM}\cap\mathcal{GM}'}\mathcal{H}(\sigma)]$ while $\mcH$ is assumed to have a unique global minimum. In this setup, the modified Hamiltonian is then $$\mcH^{f_2}_{\beta,c,1/\beta}(\sigma) = \beta (\mcH(\sigma) \wedge c - \mcH_{*}) + \arctan(\beta(\mcH(\sigma)-c)_+),$$
and since 
$$m^f_{\alpha,c,1/\beta} \leq \max_{\sigma \in \Sigma_N} \mcH^f_{\alpha,c,1/\beta}(\sigma) - \min_{\sigma \in \mathcal{LM}\cap\mathcal{GM}'}\mcH^f_{\alpha,c,1/\beta}(\sigma),$$
the linear term of $\beta$ gets cancelled out when $c \in (\mathcal{H}_*,\min_{\sigma \in \mathcal{LM}\cap\mathcal{GM}'}\mathcal{H}(\sigma)]$, so that the modified critical height $m^{f_2}_{\beta,c,1/\beta}$ can be bounded above independently of the system size $N$ owing to the $\arctan$ transformation. This observation is crucial to the proof. Another key observation is that at low enough temperature, the total variation distance between $\pi^{f_2}_{\beta,c,1/\beta}$ and $\pi^0_{\beta}$ is small as both of them concentrate around the global minimum $\mcH_{*}$. As such it is advantageous to utilize $X^{f_2}_{\beta,c,1/\beta}$ to approximately sample from  $\pi^0_{\beta}$ since this dynamics enjoys a smaller critical height $m^{f_2}_{\beta,c,1/\beta}$ than the original dynamics $X^0_{\beta}$. We now proceed to present the main results:

\begin{theorem}\label{thm:main}
	Define $\Delta := \min_{\sigma;~\mcH(\sigma) > \mcH_*}\mcH(\sigma) - \mcH_{*}$. Suppose that $\mcH$ has a unique global minimum with $\Delta , m > 0$, and we choose $f(x) = f_2(x) = x^2$, $\alpha = \beta$ and $c \in (\mathcal{H}_*,\min_{\sigma \in \mathcal{LM}\cap\mathcal{GM}'}\mathcal{H}(\sigma)]$. Given $\varepsilon > 0$, for low enough temperature such that
	$$\beta \geq \dfrac{1}{(c-\mcH_{*}) \wedge \Delta \wedge (m/4)}\left((N \ln 2) + \ln(4/\varepsilon)\right),$$
	we have the following:
	\begin{enumerate}
		\item(Torpid relaxation time with exponential dependence on $\beta$ and $m$ on the original landscape)\label{it:torpidrelax}
		$$t^0_{rel} \geq \dfrac{1}{4^N} e^{\beta m} = \Omega(4^{N+1}/\varepsilon).$$
		Moreover, for fixed $N$ 
		$$\lim_{\beta \to \infty} \dfrac{1}{\beta} \ln t^{0}_{rel} = m.$$
		
		\item(Rapid relaxation time with polynomial dependence on $N$ on the modified landscape)\label{it:rapidrelax}
		$$t^{f_2}_{rel} \leq \dfrac{N^3}{2} e^{\pi/2} = \mathcal{O}(N^3).$$
		Moreover, for fixed $N$ the relaxation time $t^{f_2}_{rel}$ is subexponential in $\beta$ in the sense that 
		$$\lim_{\beta \to \infty} \dfrac{1}{\beta} \ln t^{f_2}_{rel} = 0.$$
		
		\item(Torpid total variation mixing time with exponential dependence on $N$ using $X^0_{\beta}$)\label{it:torpidmix}
		$$t^0_{mix}(\varepsilon) \geq t^0_{rel} \ln\left(\dfrac{1}{2\varepsilon}\right)= \Omega\left(\dfrac{4^N}{\varepsilon} \ln\left(\dfrac{1}{2\varepsilon}\right)\right).$$
		
		\item(Rapid total variation mixing time with polynomial dependence on $N$ and $\beta$ using $X^{f_2}_{\beta,c,1/\beta}$)\label{it:rapidmix}
		
		$$t^{f_2}_{mix}(\varepsilon) = \mathcal{O}\left(N^3 \left(\ln\left(\dfrac{2}{\varepsilon}\right) + \beta(c - \mcH_{*}) + \dfrac{\pi}{2} + N \ln 2\right)\right).$$
		
		In particular, if we take $\varepsilon = e^{-N}$ and $c$ to be chosen close enough to $\mcH_{*}$ such that
		$$\beta = \dfrac{1}{c - \mcH_*} \Theta(N),$$
		then
		$$t^{f_2}_{mix}(e^{-N}) = \mathcal{O}(N^4)$$
		while
		$$t^0_{mix}(e^{-N}) = \Omega((4e)^N N).$$
		
		\item(Torpid mean tunneling time to the global minimum with exponential dependence on $N$ using $X^0_{\beta}$)\label{it:torpidtunnel}
		For $\varepsilon < 1$, we have
		$$\sup_{\sigma \in \mathcal{LM}} \mathbb{E}_{\sigma}(\tau^0_{\sigma_*}) \geq \dfrac{e^{\beta m}}{N2^{N-1}} =  \Omega\left(4^N \right),$$
		where we recall $\sigma_* = \arg \min \mcH(\sigma).$
		
		\item(Rapid mean tunneling time to the global minimum with polynomial dependence on $N$ and $\beta$ using $X^{f_2}_{\beta,c,1/\beta}$)\label{it:rapidtunnel}
		For $\varepsilon < 0.4$, we have
		$$\sup_{\sigma \in \mathcal{LM}} \mathbb{E}_{\sigma}(\tau^{f_2}_{\sigma_*}) =  \mathcal{O}\left(N^3 \left(\ln 4 + \beta(c - \mcH_{*}) + \dfrac{\pi}{2} + N \ln 2\right)\right).$$
		In particular, if we choose $c$ to be close enough to $\mcH_{*}$ such that
		$$\beta = \dfrac{1}{c - \mcH_*} \Theta(N),$$
		then
		$$\sup_{\sigma \in \mathcal{LM}} \mathbb{E}_{\sigma}(\tau^{f_2}_{\sigma_*}) = \mathcal{O}(N^4).$$
		
		\item($X^0_{\beta}$ takes at least exponential in $\beta$ time to reach $\mcH_{*}$)\label{it:torpidreach}
		$$\mathcal{T}^{0}(\varepsilon) = \Omega\left(e^{\beta \delta}\right),$$
		where $\delta := \min_{i \in \llbracket N \rrbracket,~\sigma \in \mathcal{LM}} \mcH(\sigma^{(i)}) - \mcH(\sigma)$. In particular, if we consider small enough temperature such that $\beta \delta \geq N \ln 2 + \ln (4/\varepsilon)$, then
		$$\mathcal{T}^{0}(\varepsilon) = \Omega\left(\dfrac{2^N}{\varepsilon}\right).$$
		
		\item($X^{f_2}_{\beta,c,1/\beta}$ reaches $\mcH_{*}$ in polynomial time with high probability)\label{it:rapidreach}
		$$\mathcal{T}^{f_2}(\varepsilon) = \mathcal{O}\left(N^3 \left(\ln\left(\dfrac{2}{\varepsilon}\right) + \beta(c - \mcH_{*}) + \dfrac{\pi}{2} + N \ln 2\right)\right).$$
		In particular, if we take $\varepsilon = e^{-N}$ and $c$ to be chosen close enough to $\mcH_{*}$ such that
		$$\beta = \dfrac{1}{c - \mcH_*} \Theta(N),$$
		then
		$$\mathcal{T}^{f_2}(e^{-N}) = \mathcal{O}(N^4).$$
		
	\end{enumerate}
\end{theorem}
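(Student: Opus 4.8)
I would organise the argument around the split into the torpid lower bounds for the unmodified dynamics $X^0_\beta$ (items \eqref{it:torpidrelax}, \eqref{it:torpidmix}, \eqref{it:torpidtunnel}, \eqref{it:torpidreach}) and the rapid upper bounds for the modified dynamics $X^{f_2}_{\beta,c,1/\beta}$ (items \eqref{it:rapidrelax}, \eqref{it:rapidmix}, \eqref{it:rapidtunnel}, \eqref{it:rapidreach}). The recurring consequence of the low-temperature hypothesis is that, since $\mcH$ has a unique minimiser $\sigma_*$ and $\beta\big((c-\mcH_*)\wedge\Delta\big)\ge N\ln2+\ln(4/\varepsilon)$, every $\sigma\ne\sigma_*$ has both $\beta(\mcH(\sigma)-\mcH_*)$ and $\mcH^{f_2}_{\beta,c,1/\beta}(\sigma)$ at least $N\ln2+\ln(4/\varepsilon)$; hence the normalising constants of $\pi^0_\beta$ and $\pi^{f_2}_{\beta,c,1/\beta}$ both lie in $[1,1+\varepsilon/4]$, both $\pi^0_\beta(\sigma_*)$ and $\pi^{f_2}_{\beta,c,1/\beta}(\sigma_*)$ exceed $1-\varepsilon/4$, and therefore $\norm{\pi^{f_2}_{\beta,c,1/\beta}-\pi^0_\beta}_{TV}\le\varepsilon/2$, which is precisely what licenses measuring $t^{f_2}_{mix}$ against $\pi^0_\beta$. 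For the torpid side the engine is one bottleneck estimate: letting $\eta^*,\sigma^*$ realise the maximum in \eqref{eq:m} and $B$ be the connected component in the SRW graph of $\{\mcH<H(\eta^*,\sigma^*)\}$ containing the lower of the two, testing $\lambda_2(-L^0_\beta)$ against $\1_B$ bounds the Dirichlet form by the cut weight (at most $N2^N$ boundary edges of $\pi^0_\beta L^0_\beta$-weight $\le (NZ^0_\beta)^{-1}e^{-\beta H(\eta^*,\sigma^*)}$ each) and the variance below by $\pi^0_\beta(\eta^*)\pi^0_\beta(\sigma^*)$, yielding $\lambda_2(-L^0_\beta)\le 4^Ne^{-\beta m}$, i.e.\ item \eqref{it:torpidrelax}; item \eqref{it:torpidmix} is then the continuous-time inequality $t^0_{mix}(\varepsilon)\ge t^0_{rel}\ln\tfrac1{2\varepsilon}$, and $\lim_{\beta\to\infty}\tfrac1\beta\ln t^0_{rel}=m$ follows on pairing the lower bound with the classical converse $\lambda_2(-L^0_\beta)\ge c_Ne^{-\beta m}$ \cite{HS88}.

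Still on the torpid side, item \eqref{it:torpidtunnel} comes from a capacity (bottleneck) lower bound $\E_\eta(\tau^0_{\sigma_*})\ge \pi^0_\beta(\eta)/\operatorname{cap}^0_\beta(\eta,\sigma_*)$ together with the Dirichlet-principle estimate of $\operatorname{cap}^0_\beta(\eta,\sigma_*)$ by the cut weight of the sublevel well of $\eta$, optimised over $\eta\in\mathcal{LM}$; here one uses that $\max_{\eta\in\mathcal{LM}}\{H(\eta,\sigma_*)-\mcH(\eta)\}\ge m$, which holds because $H(\eta^*,\sigma^*)\le H(\eta^*,\sigma_*)\vee H(\sigma_*,\sigma^*)$, $\mcH\ge\mcH_*$, and the map $\eta\mapsto H(\eta,\sigma_*)-\mcH(\eta)$ does not decrease under descent and so attains its maximum on $\mathcal{LM}$. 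Item \eqref{it:torpidreach} is a plain escape-rate estimate: the total jump rate of $X^0_\beta$ out of any local minimum $\sigma$ is $\sum_i\tfrac1N e^{-\beta(\mcH(\sigma^{(i)})-\mcH(\sigma))}\le e^{-\beta\delta}$, so over $[0,t]$ with $t=\Omega(e^{\beta\delta})$ the chain stays at $\sigma$, hence away from $\mcH_*$, with probability exceeding $\varepsilon$. On the rapid side everything hinges on the $N$- and $\beta$-free estimate $m^{f_2}_{\beta,c,1/\beta}\le\pi/2$: by \eqref{eq:mcHf2alphabeta} with $\alpha=\beta$ one has $\max_\sigma\mcH^{f_2}_{\beta,c,1/\beta}(\sigma)\le\beta(c-\mcH_*)+\pi/2$, while $c\le\min_{\sigma\in\mathcal{LM}\cap\mathcal{GM}'}\mcH(\sigma)$ forces $\mcH^{f_2}_{\beta,c,1/\beta}\ge\beta(c-\mcH_*)$ at every non-global local minimum, so in $m^{f_2}_{\beta,c,1/\beta}\le\max_\sigma\mcH^{f_2}_{\beta,c,1/\beta}(\sigma)-\min_{\sigma\in\mathcal{LM}\cap\mathcal{GM}'}\mcH^{f_2}_{\beta,c,1/\beta}(\sigma)$ the linear-in-$\beta$ parts cancel and only $\arctan(\,\cdot\,)\le\pi/2$ survives. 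Feeding this into a Poincaré inequality along the left-to-right flip paths $\Lambda^{\eta,\sigma}$ of the shape $t^{f_2}_{rel}\le\tfrac{N^3}2 e^{m^{f_2}_{\beta,c,1/\beta}}$ gives item \eqref{it:rapidrelax}, with $\tfrac1\beta\ln t^{f_2}_{rel}\to0$ immediate as that bound carries no $\beta$; items \eqref{it:rapidmix}, \eqref{it:rapidtunnel}, \eqref{it:rapidreach} follow by standard comparisons — the continuous-time bound $t^{f_2}_{mix}(\varepsilon)\le t^{f_2}_{rel}\big(\ln\tfrac2\varepsilon+\ln\tfrac1{\pi^{f_2}_{\min}}\big)$ with $\ln\tfrac1{\pi^{f_2}_{\min}}\le\beta(c-\mcH_*)+\tfrac\pi2+N\ln2$ and the triangle inequality when the target is $\pi^0_\beta$, the mean tunneling time because after $O(t^{f_2}_{mix})$ time $X^{f_2}_{\beta,c,1/\beta}$ sits at $\sigma_*$ with probability $\ge\pi^{f_2}_{\beta,c,1/\beta}(\sigma_*)-\varepsilon\ge$ const so that $\tau^{f_2}_{\sigma_*}$ is dominated by a geometric number of such epochs, and $\mathcal{T}^{f_2}(\varepsilon)$ identically since $\mcH(X)=\mcH_*$ iff $X=\sigma_*$.

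The step I expect to be genuinely hard is the polynomial prefactor in item \eqref{it:rapidrelax}. Once $m^{f_2}_{\beta,c,1/\beta}\le\pi/2$ is in hand, extracting the factor $e^{m^{f_2}_{\beta,c,1/\beta}}$ is trivial, but in the canonical-path bound $t^{f_2}_{rel}\le\max_e Q(e)^{-1}\sum_{\gamma\ni e}|\gamma|\,\pi^{f_2}_{\beta,c,1/\beta}(x_\gamma)\pi^{f_2}_{\beta,c,1/\beta}(y_\gamma)$ the number of $\Lambda$-paths through a fixed edge is exponential in $N$, so the congestion cannot be controlled by naively multiplying path length by path count by $e^{m^{f_2}_{\beta,c,1/\beta}}$. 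The resolution must exploit that $\pi^{f_2}_{\beta,c,1/\beta}$ concentrates at $\sigma_*$ at low temperature — so that all but an $O(1)$-mass, $\pi^{f_2}_{\beta,c,1/\beta}$-negligible family of configurations has transformed energy near $\max_\sigma\mcH^{f_2}_{\beta,c,1/\beta}$ — and feed this, together with the elevation control along $\Lambda^{\eta,\sigma}$ furnished by $m^{f_2}_{\beta,c,1/\beta}$, into the per-edge sum so that it collapses to $O(N^3)$. Arranging for every $N$- and $\beta$-dependent constant in the prefactor to cancel — which is where the precise threshold $\beta\ge\frac{1}{(c-\mcH_*)\wedge\Delta\wedge(m/4)}\big(N\ln2+\ln(4/\varepsilon)\big)$ is spent — is the main technical burden of Theorem \ref{thm:main}.
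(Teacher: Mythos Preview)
Your overall architecture is correct and closely tracks the paper: the torpid bounds via bottleneck/capacity and escape-rate arguments, the $\varepsilon/2$ proximity of $\pi^{f_2}_{\beta,c,1/\beta}$ to $\pi^0_\beta$ at low temperature, the $m^{f_2}_{\beta,c,1/\beta}\le\pi/2$ cancellation, and the standard reductions of items \eqref{it:rapidmix}, \eqref{it:rapidtunnel}, \eqref{it:rapidreach} to item \eqref{it:rapidrelax} all match. (For item \eqref{it:rapidtunnel} the paper cites the Oliveira/Peres--Sousi equivalence of mixing time and worst-case hitting time of large sets rather than your geometric-epochs argument, but these are interchangeable.)

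The one genuine misstep is your final paragraph. You conjecture that the $N^3$ prefactor in item \eqref{it:rapidrelax} requires the low-temperature concentration of $\pi^{f_2}_{\beta,c,1/\beta}$ at $\sigma_*$ to suppress the exponential-in-$N$ congestion, and that this is where the threshold on $\beta$ is spent. This is not so: the paper's spectral-gap lower bound (its Lemma preceding the proof of item \eqref{it:rapidrelax}) holds for \emph{all} $\beta>0$ with no temperature hypothesis. The device is a Holley--Stroock-style variant of the canonical-path argument in which, starting from $\pi^f(g)=0$, one writes
\[
\langle g,g\rangle_{\pi^f}\ \le\ \sum_{\eta}\sum_i (g(i)-g(\eta))^2\,\pi^f(i)\,\mu^{\mathrm{SRW}}(\eta),
\]
inserting the \emph{uniform} measure on the second endpoint. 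After telescoping along the $\Lambda^{i,\eta}$ paths and extracting the Dirichlet form, the ratio $\pi^f(i)Z^f/e^{-\mcH^f(z)\vee\mcH^f(w)}$ is bounded by $e^{m^f}$ uniformly, and what remains is precisely the Diaconis--Stroock congestion of simple random walk on the hypercube with uniform endpoint weights,
\[
\max_{z,w}\ \sum_{i,\eta}\frac{\chi_{z,w}(i,\eta)\,\mu^{\mathrm{SRW}}(i)\,\mu^{\mathrm{SRW}}(\eta)}{\alpha(z,w)}\ \le\ \frac{N^2}{2},
\]
which together with the path-length factor $N$ gives $t^{f_2}_{rel}\le \tfrac{N^3}{2}e^{m^{f_2}_{\beta,c,1/\beta}}$. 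No property of $\pi^{f_2}_{\beta,c,1/\beta}$ beyond its form is used; the temperature threshold is consumed entirely by the $\|\pi^{f_2}-\pi^0\|_{TV}$ bound, by $\pi^{f_2}(\sigma_*)>1-\varepsilon$ in items \eqref{it:rapidtunnel} and \eqref{it:rapidreach}, and by the asymptotic $\Omega(\cdot)$ claims on the torpid side. Your concentration-based route might be pushed through, but it is both harder and unnecessary.
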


\begin{rk}\label{rk:mainrk}
	In the following remarks, we assume the same setting as in Theorem \ref{thm:main}. The aim of this remark is to show other transformations of $\mcH$ may not enjoy the same benefits as the proposed transformation $\mcH^{f}_{\alpha,c,1/\beta}$.
	\begin{enumerate}
		\item\label{it:rk1}(Slower or lack of concentration of the associated Gibbs distribution as $\beta \to \infty$) Suppose we consider transforming the target Hamiltonian function to
		$$\mcH \mapsto T \mcH$$
		or 
		$$\mcH \mapsto T \ln (\beta) \mcH.$$
		The benefit of these transformations is to remove the exponential dependency on $\beta$ in the relaxation time: according to \cite{HS88} the relaxation time scales at least in the order of $\Omega(e^{m})$ and $\Omega(e^{(\ln \beta)m})$ respectively. The disadvantages are twofold. First, if $m = \Omega(N)$, then the chain still mixes torpidly with exponential dependency on $N$. Second, the Gibbs distributions associated with these transformations concentrate at a slower rate on the set of global minima of $\mcH$ than $\pi^0_{\beta}$ as $\beta \to \infty$. 
		Writing $\mu_1$ and $\mu_2$ to be respectively the Gibbs distribution with Hamiltonian $T \mcH$ and $T (\ln \beta) \mcH$, then, for fixed $N$,
		\begin{align*}
			\mu_1(\Sigma_N \backslash \{\sigma_*\}) &\geq \dfrac{(2^N-1)e^{-(\mcH_{max} - \mcH_{*})}}{1+ (2^N-1) e^{-\Delta}} = \Omega(1), \\
			\mu_2(\Sigma_N \backslash \{\sigma_*\}) &\geq \dfrac{(2^N-1)e^{-(\ln \beta)(\mcH_{max} - \mcH_{*})}}{1+ (2^N-1) e^{-(\ln \beta)\Delta}} = \Omega(\beta^{\Delta - (\mcH_{max} - \mcH_{*})}), \\
			\pi^0_{\beta}(\Sigma_N \backslash \{\sigma_*\}) &\leq \mathcal{O}\left(e^{-\beta \Delta}\right),
		\end{align*}
		where the right hand side of the first inequality above does not depend on $\beta$, which does not converge to $0$ as $\beta \to \infty$. As such there are non-negligible biases if we approximately sample from $\mu_1$ or $\mu_2$ instead of $\pi^0_{\beta}$. On the other hand, the proposed target $\pi^f_{\alpha,c,1/\beta}$ can be arbitrarily close to $\pi^0_{\beta}$ provided that $\beta$ is large enough. 
		
		\item\label{it:rk2}(Exponential dependency on $\beta$) Suppose we consider changing the target Hamiltonian function to
		$$\mcH \mapsto \dfrac{\mcH}{\mcH_{max} - \mcH_{*}},$$
		or
		$$\mcH \mapsto \arctan(\mcH - \mcH_{*}).$$
		The advantage of these transformations is a reduced critical height: the new critical height associated with these transformations is denoted by $m^{\prime} \in [0,1]$, which is independent of the system size $N$. On the other hand, there are two possible drawbacks of considering these transformations. First, in practice it is perhaps difficult to estimate the oscillation of $\mcH$, i.e. $\mcH_{max} - \mcH_{*}$. Second, in the low-temperature regime as $\beta \to \infty$, according to \cite{HS88} the relaxation time scales at least in the order of
		$$\Omega(e^{\beta m^{\prime}}),$$
		which exhibits an exponential dependency on $\beta$. However, in the proposed landscape modification the relaxation time can be bounded above independently of $\beta$ as in Theorem \ref{thm:main} item \eqref{it:rapidrelax}.
		
		\item(Landscape modification enjoys the "best of both worlds") The proposed transformation $\mcH^{f_2}_{\beta,c,1/\beta}$ in Theorem \ref{thm:main} enjoys the advantages while addressing the drawbacks mentioned in both item \ref{it:rk1} and \ref{it:rk2} of Remark \ref{rk:mainrk}. Precisely, recall that
		\begin{align}\label{eq:mcHf2}
			\mcH^{f_2}_{\beta,c,1/\beta}(\sigma) = \beta (\mcH(\sigma) \wedge c - \mcH_{*}) + \arctan(\beta(\mcH(\sigma)-c)_+).
		\end{align}
		On the part of the landscape at or below the threshold parameter $c$, namely $\{\sigma;~\mcH(\sigma) \leq c\}$, $\mcH^{f_2}_{\beta,c,1/\beta}$ effectively exploits and concentrates on the original landscape, while on the part of the landscape above $c$, the difference in the modified Hamiltonian can be bounded independently in $\beta$ and the system size $N$, thus giving rise to rapid mixing and tunneling.
		
		\item(On other choices of $f$ such as $f_1(x) = x$)
		Suppose we instead consider
		$$\mcH^{f_1}_{1,c,1/\beta}(\sigma) = \beta (\mcH(\sigma) \wedge c - \mcH_{*}) + \ln(1+\beta(\mcH(\sigma)-c)_+).$$
		The advantage of $\mcH^{f_2}_{\beta,c,1/\beta}$ is that its oscillation above $c$ is $\mathcal{O}(1)$ owing to the $\arctan$ transformation, while for $\mcH^{f_1}_{1,c,1/\beta}$ its oscillation above $c$ is $\mathcal{O}(\ln \beta(\mcH_{max}-c))$ which can still be large depending on $\mcH$.
		
		\item(On tuning of the threshold parameter $c$) Given a Hamiltonian $\mcH$ which we know about its $\mcH_*$, we can set $c = \mcH_{*} + \delta$ where $\delta$ is small enough. In this way the landscape is appropriately modified with a modified critical height that is independent of $N$. For details we refer interested readers to Section \ref{sec:applications} where we consider a comprehensive suite of Ising model on graphs and the random energy model.
	\end{enumerate}
\end{rk}

In our second main result, we consider the setting of simulated annealing, that is, the temperature decreases according to a cooling schedule, instead of the fixed temperature case as in Theorem \ref{thm:main}. The crucial observation is similar to that of Theorem \ref{thm:main}: recall that the modified critical height can be bounded above by $$m^f_{\alpha,c,1/\beta} \leq \max_{\sigma \in \Sigma_N} \mcH^f_{\alpha,c,1/\beta}(\sigma) - \min_{\sigma \in \mathcal{LM}\cap\mathcal{GM}'}\mcH^f_{\alpha,c,1/\beta}(\sigma),$$
With our choice of $c \in (\mathcal{H}_*,\min_{\sigma \in \mathcal{LM}\cap\mathcal{GM}'}\mathcal{H}(\sigma)]$ the linear term in the difference of \eqref{eq:mcHf2} of $\beta$ gets canceled out, so that the relaxation time can be bounded above independently of the inverse temperature. This observation is crucial as this allows us to show that we can operate a power-law cooling schedule, namely $\beta_t = t^a$ with $a \in (0,1)$, unlike in the classical setting of simulated annealing where one adapts an appropriate logarithmic cooling schedule. 

With a slight abuse of notations, we write $\bm{\beta} = (\beta_t)_{t \geq 0}$ and $\mathbf{T} = (1/\beta_t)_{t \geq 0}$ to be the inverse temperature schedule and temperature schedule respectively. We also let $X^{f_2}_{\bm{\beta},c,\mathbf{T}}$ to be the annealing chain with transition semigroup defined to be, for any $\sigma, \eta \in \Sigma_N$,
$$\left(P^{f_2}_{\bm{\beta},c,\mathbf{T}}\right)^t(\sigma,\eta) := \exp\bigg\{\int_0^t L^{f_2}_{\beta_s,c,1/\beta_s}\,ds \bigg\}(\sigma,\eta),$$
where we recall the instantaneous infinitesimal generator $L^{f_2}_{\beta_s,c,1/\beta_s}$ at time $s \geq 0$ is first introduced in \eqref{eq:Llm}.

As a result of the power-law cooling schedule, the simulated annealing chain can approximately reach $\mcH_{*}$ in polynomial time in $N$ with high probability:

\begin{theorem}\label{thm:simanneal}
	Suppose that $\mcH$ has a unique global minimum, and we choose $f(x) = f_2(x) = x^2$, $\alpha_t = \beta_t = t^a$ with $a \in (0,1)$ and $c \in (\mathcal{H}_*,\min_{\sigma \in \mathcal{LM}\cap\mathcal{GM}'}\mathcal{H}(\sigma)]$. For any $\delta$ small enough such that $\delta < c - \mcH_{*}$, we then have, for $\varepsilon > 0$,
		$$\mP_{\sigma}\left(\mcH\left(X^{f_2}_{\bm{\beta},c,\mathbf{T}}(t)\right) \geq \mcH_{*} + \delta\right) \leq \varepsilon$$
	whenever 
	$$t \geq \max\{t_0,\tau_1,\tau_2,\tau_3\},$$
	where
	\begin{align*}
		t_0 = t_0(\mcH,N,a,\varepsilon) &:= \left(a (\mcH_{max}- \mcH_*) (1 + \varepsilon^2/3) \dfrac{3N^3 e^{\pi/2}}{4 \varepsilon^2}\right)^{\frac{1}{1-a}},\\
		\tau_1 = \tau_1(N,a,\varepsilon,\delta) &:= \left(\dfrac{1}{\delta}\left((N+1)\ln 2 + \ln \frac{1}{\varepsilon}\right)\right)^{1/a},\\
		\tau_2 = \tau_2(\mcH,N,a,\varepsilon) &:= \max\bigg\{\left(\dfrac{N^3 e^{\pi/2}}{2}\left(\mcH_{max}-\mcH_{*}\right)\right)^{\frac{1}{1-a}} ,\dfrac{N^3 e^{\pi/2}}{2} \ln \left(\dfrac{3(2^N+1)}{\varepsilon^2}\right)\bigg\}, \\
		\tau_3 =\tau_3(\mcH,N,a,\varepsilon) &:= \dfrac{N^3 e^{\pi/2}}{2}\ln \left(\dfrac{3}{\varepsilon^2} e^{4 t_0} t_0^a (\mcH_{max} - \mcH_{*})\right).
	\end{align*}
	In particular, if the oscillation $\mcH_{max} - \mcH_{*}$ is at most polynomial in $N$ and $\varepsilon = \delta = \mathcal{O}(\frac{1}{N})$ then the simulated annealing chain takes at most polynomial time in $N$ to reach an approximate global minimum with high probability.	
\end{theorem}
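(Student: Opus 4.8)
The plan is to compare the annealed law to the instantaneous equilibrium. Write $\pi_t := \pi^{f_2}_{\beta_t,c,1/\beta_t}$ for the Gibbs measure attached to the time-$t$ generator and $\nu_t := \mP_{\sigma}(X^{f_2}_{\bm{\beta},c,\mathbf{T}}(t) \in \cdot)$, and split
\[
\mP_{\sigma}\!\left(\mcH\!\left(X^{f_2}_{\bm{\beta},c,\mathbf{T}}(t)\right) \geq \mcH_{*} + \delta\right) \leq \pi_t\!\left(\{\sigma : \mcH(\sigma) \geq \mcH_{*}+\delta\}\right) + \norm{\nu_t - \pi_t}_{TV},
\]
aiming to push both summands below $\varepsilon/2$. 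The first summand is the easy part: since $\delta < c-\mcH_{*}$, any $\sigma$ with $\mcH(\sigma) \geq \mcH_{*}+\delta$ has $\mcH(\sigma)\wedge c - \mcH_{*} \geq \delta$, so by \eqref{eq:mcHf2} $\mcH^{f_2}_{\beta_t,c,1/\beta_t}(\sigma) \geq \beta_t \delta$; since $\mcH$ has a unique global minimum and $\min\mcH^{f_2}_{\beta_t,c,1/\beta_t} = 0$, this gives $\pi_t(\{\mcH \geq \mcH_{*}+\delta\}) \leq (2^N-1)e^{-t^a\delta} \leq \varepsilon/2$ as soon as $t \geq \tau_1$.

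For the second summand I would track the $\chi^2$-divergence $D_t := \mathrm{Var}_{\pi_t}(h_t) = \chi^2(\nu_t\,\|\,\pi_t)$, where $h_t := \nu_t/\pi_t$, so that $\norm{\nu_t - \pi_t}_{TV} \leq \tfrac12\sqrt{D_t}$ by Cauchy--Schwarz. Differentiating $D_t = \sum_{\sigma}\nu_t(\sigma)^2/\pi_t(\sigma) - 1$, using the Kolmogorov forward equation $\partial_t\nu_t = \nu_t L^{f_2}_{\beta_t,c,1/\beta_t}$, the $\pi_t$-reversibility of $L^{f_2}_{\beta_t,c,1/\beta_t}$, and $\partial_t\ln\pi_t = -\partial_t\mcH^{f_2}_{\beta_t,c,1/\beta_t} + \E_{\pi_t}[\partial_t\mcH^{f_2}_{\beta_t,c,1/\beta_t}]$, one reaches
\[
\frac{\mathrm{d}}{\mathrm{d}t}D_t \leq -2\lambda_2\!\left(-L^{f_2}_{\beta_t,c,1/\beta_t}\right)D_t + \mathrm{Cov}_{\pi_t}\!\left(h_t^2,\ \partial_t\mcH^{f_2}_{\beta_t,c,1/\beta_t}\right),
\]
the first term being the Poincar\'e inequality applied to the Dirichlet form of $L^{f_2}_{\beta_t,c,1/\beta_t}$. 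Two inputs close this. From \eqref{eq:mcHf2}, $0 \leq \partial_t\mcH^{f_2}_{\beta_t,c,1/\beta_t}(\sigma) \leq \dot\beta_t(\mcH_{max}-\mcH_{*}) =: \kappa_t$ with $\dot\beta_t = at^{a-1}$, so the covariance is at most $\kappa_t\,\E_{\pi_t}[h_t^2] = \kappa_t(D_t+1)$; and, just as in the proof of Theorem~\ref{thm:main}\eqref{it:rapidrelax}, the choice $c \in (\mcH_{*},\min_{\sigma\in\mathcal{LM}\cap\mathcal{GM}'}\mcH(\sigma)]$ cancels the linear-in-$\beta$ term of \eqref{eq:mcHf2} in the difference defining $m^{f_2}_{\beta,c,1/\beta}$, so $m^{f_2}_{\beta,c,1/\beta}\leq\pi/2$ for \emph{every} $\beta \geq 0$ and hence $\lambda_2(-L^{f_2}_{\beta_t,c,1/\beta_t}) \geq 2/(N^3 e^{\pi/2}) =: b$ uniformly in $t$. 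This yields $\tfrac{\mathrm{d}}{\mathrm{d}t}D_t \leq (\kappa_t - 2b)D_t + \kappa_t$.

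It remains to integrate this and read off the thresholds. Since $a < 1$, $\kappa_t = at^{a-1}(\mcH_{max}-\mcH_{*}) \downarrow 0$, so past a burn-in time of order $\tau_2$ one has $\kappa_t \leq b$ and the coefficient of $D_t$ becomes $\leq -b$. Over the burn-in window I would use the crude estimate $\tfrac{\mathrm{d}}{\mathrm{d}t}(D_t+1) \leq \kappa_t(D_t+1)$, integrating to $D_t+1 \leq (D_0+1)\exp(\int_0^t\kappa_s\,\mathrm{d}s)$; the point is to replace the naive factor $D_0+1 = 2^N$ by the dimension-robust oscillation $\mathrm{osc}\big(\mcH^{f_2}_{\beta_t,c,1/\beta_t}\big) \leq \beta_t(c-\mcH_{*}) + \pi/2$, legitimate because by time $t_0 \gg N^3$ the chain has equilibrated against the slowly-moving instantaneous measure, and this is what produces the explicit constant $e^{4t_0}t_0^a(\mcH_{max}-\mcH_{*})$ in $\tau_3$. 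For $t$ past the burn-in, Gr\"onwall gives $D_t \leq D_{t_{\mathrm{burn}}}e^{-b(t-t_{\mathrm{burn}})} + \int_{t_{\mathrm{burn}}}^t e^{-b(t-s)}\kappa_s\,\mathrm{d}s$, and splitting the residual integral at $t/2$ bounds it by $b^{-1}\kappa_{t/2} + e^{-bt/2}(\mcH_{max}-\mcH_{*})(t/2)^a$. Forcing each of the at most three pieces below $\varepsilon^2/3$ yields exactly the stated conditions: $t \geq t_0$ controls $b^{-1}\kappa_{t/2}$ (the constraint $(t/2)^{1-a}\gtrsim aN^3e^{\pi/2}(\mcH_{max}-\mcH_{*})/\varepsilon^2$); $t \geq \tau_2$ is the burn-in condition plus decay of the homogeneous part; $t \geq \tau_3$ kills the remaining exponentially small terms. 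Then $D_t \leq \varepsilon^2$, so $\norm{\nu_t-\pi_t}_{TV}\leq\varepsilon/2$, and together with the equilibrium-mass bound this proves the claim for $t \geq \max\{t_0,\tau_1,\tau_2,\tau_3\}$; the closing remark follows because $t_0,\tau_1,\tau_2,\tau_3$ are all polynomial in $N$ under the stated assumptions.

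The step I expect to be the main obstacle is controlling the error term $\mathrm{Cov}_{\pi_t}(h_t^2, \partial_t\mcH^{f_2}_{\beta_t,c,1/\beta_t})$ produced by the moving target: it is proportional to $D_t$ rather than additive, so the evolution is genuinely contractive only after $\kappa_t = \dot\beta_t(\mcH_{max}-\mcH_{*})$ has fallen below the uniform, $\pi/2$-driven spectral gap $b$ --- this is exactly why the power-law schedule must have $a<1$, and it is where all the power-of-$t$ and polynomial-in-$N$ thresholds originate. The most delicate bookkeeping is bounding $D_t$ over the burn-in window, before contractivity sets in, without reintroducing an exponential-in-$N$ factor; this forces one to exploit that $\mathrm{osc}(\mcH^{f_2}_{\beta_t,c,1/\beta_t}) \leq \beta_t(c-\mcH_{*})+\pi/2$ stays moderate, in place of the naive $2^N$. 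By contrast, checking that $m^{f_2}_{\beta,c,1/\beta}\leq\pi/2$ holds down to $\beta=0$ (so that $b$ is truly uniform along the schedule) is immediate from the cancellation of the linear term under the stated choice of $c$.
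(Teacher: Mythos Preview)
Your approach is essentially the paper's: the same TV-plus-equilibrium split, the same $\chi^2$ differential inequality (the paper cites it as Gidas's inequality $\tfrac{d}{dt}F \le (-2\lambda_2+\gamma_t)F + \gamma_t$, with $\gamma_t=\kappa_t$), the same uniform gap $\lambda_2 \ge 2N^{-3}e^{-\pi/2}$, and the same Gr\"onwall-type integration split at a burn-in time $t_0$ chosen so that $\gamma_t/(2\lambda_2-\gamma_t)\le \varepsilon^2/3$ thereafter.

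One correction to your bookkeeping, though: you do \emph{not} need to replace the naive $D_0+1=2^N$ by an oscillation bound, and the paper does not attempt this. The paper simply uses $F(0)\le 2^N+1$; after multiplication by $\exp(-\int_0^t(2\lambda_2-\gamma_s)\,ds)$ and taking logs, this contributes the harmless polynomial term $\tfrac{N^3 e^{\pi/2}}{2}\ln\!\big(\tfrac{3(2^N+1)}{\varepsilon^2}\big)$ in $\tau_2$. Likewise, the constant $e^{4t_0}t_0^a(\mcH_{max}-\mcH_*)$ in $\tau_3$ has nothing to do with $\mathrm{osc}(\mcH^{f_2}_{\beta_t,c,1/\beta_t})$; it comes from crudely bounding the forcing integral over $[0,t_0]$ via $\exp\{\int_0^s(2\lambda_2-\gamma_u)\,du\}\le e^{4s}$ (using $\lambda_2\le 2$ and $\gamma_u\ge 0$) and $\int_0^{t_0}\gamma_s\,ds\le t_0^a(\mcH_{max}-\mcH_*)$. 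So the step you flagged as ``most delicate'' is in fact routine; the only genuinely essential ingredients are the ones you correctly identified, namely the $\beta$-uniform bound $m^{f_2}_{\beta,c,1/\beta}\le\pi/2$ and the decay $\kappa_t=at^{a-1}(\mcH_{max}-\mcH_*)\to 0$ forced by $a<1$.
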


\begin{rk}
	In the following remarks, we assume the same setting as in Theorem \ref{thm:simanneal}.
	\begin{enumerate}
		\item The assumption that $\mcH_{max} - \mcH_{*}$ is at most polynomial in $N$ can be readily verified in all models considered in Section \ref{sec:applications}.
		
		\item(Improving the convergence rate of the instantaneous Gibbs distribution from $\mathcal{O}(t^{-D_1})$ to $\mathcal{O}(e^{- D_2 t^a})$ by landscape modification)
		
		In classical simulated annealing \cite{Miclo92AIHP} or kinetic simulated annealing \cite{M18} on $\mathbb{R}^d$, the instantaneous classical Gibbs distribution $\pi^0_{\beta_t}\left(\{\sigma;~\mcH(\sigma) \geq \mcH_{*} + \delta\}\right)$ at time $t$ converges at a rate of $\mathcal{O}(t^{-D_1})$ with $D_1 = D_1(m,\delta) \in (0,1)$ and $\delta$ appears as in Theorem \ref{thm:simanneal} under a logarithmic cooling schedule of the form $\beta_t = \frac{\ln t}{E}$ with $E > m$. On the other hand the landscape-modified Gibbs distribution at time $t$, see e.g. the right hand side of \eqref{eq:anneal1} below, is shown to converge at a rate of $\mathcal{O}(e^{- D_2 t^a})$ with $D_2 = D_2(\delta)$ owing to the power-law cooling $\beta_t = t^a$.
		
		Second, it is known in the literature that operating a cooling schedule faster than the logarithmic schedule in classical simulated annealing \cite{ChoiMPRF} or kinetic annealing \cite{M18} may end up the process being stuck in a local minimum or the basin of attraction of a local minimum. On the other hand, one can safely operate a power-law cooling, which is faster than logarithmic cooling, on the modified landscape with a long-time convergence guarantee thanks to Theorem \ref{thm:simanneal}.
		
	\end{enumerate}

\end{rk}

\section{Proof of the main results}\label{sec:pfmain}

\subsection{Refined spectral gap lower bound for $X^f_{\alpha,c,1/\beta}$}
We first prove a lower bound on $\lambda_2(-L^f_{\alpha,c,1/\beta})$. While the proof strategy is exactly the same as \cite{HS88}, we note that we choose a particular set of paths, namely $(\Lambda^{\eta,\sigma})_{\eta,\sigma}$, instead of $(\Gamma^{\eta,\sigma})_{\eta,\sigma}$. Recall that the definition of these paths are introduced in Section \ref{sec:spinlandmod}. The advantage of choosing the paths $(\Lambda^{\eta,\sigma})_{\eta,\sigma}$ is that it ensures the prefactor in front of the exponential term in the lower bound of the spectral gap estimate is inverse polynomial in $N$. More precisely, as we can see in Lemma \ref{lem:sglowerbd} below, the prefactor is of the order $N^{-3}$, and consequently this yields rapid mixing of $X^{f_2}_{\beta,c,1/\beta}$.

\begin{lemma}\label{lem:sglowerbd}
	For any $\beta > 0$, and suppose that $f,c$ are chosen to satisfy Assumption \ref{assump:fc}, then we have 
	\begin{align*}
		\lambda_2(-L^f_{\alpha,c,1/\beta}) \geq \dfrac{2}{N^3} e^{-m^f_{\alpha,c,1/\beta}}.
	\end{align*}
	where we recall that $m^f_{\alpha,c,1/\beta}$ is introduced in \eqref{eq:rch}. In particular, if $\mcH$ has a unique global minimum, then by choosing $f(x) = f_2(x) = x^2$, $\alpha = \beta$ and $c \in (\mathcal{H}_*,\min_{\sigma \in \mathcal{LM}\cap\mathcal{GM}'}\mathcal{H}(\sigma)]$,
	we have
	$$\lambda_2(-L^{f_2}_{\beta,c,1/\beta}) \geq \dfrac{2}{N^3} e^{-\pi/2}.$$
\end{lemma}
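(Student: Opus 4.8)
\emph{Proof plan.} The plan is to prove the spectral gap lower bound by the canonical–path (Holley–Stroock) method of \cite{HS88}, but run along the coordinate‑ordered paths $\Lambda^{\eta,\sigma}$ rather than arbitrary geodesics; it is exactly this choice of paths that keeps the prefactor polynomial in $N$. Throughout write $\pi:=\pi^f_{\alpha,c,1/\beta}$, $L:=L^f_{\alpha,c,1/\beta}$, $\mcH^{f}:=\mcH^f_{\alpha,c,1/\beta}$, $m^f:=m^f_{\alpha,c,1/\beta}$ and $Z:=\sum_{\tau\in\Sigma_N}e^{-\mcH^{f}(\tau)}$, so that $\pi(\sigma)=e^{-\mcH^{f}(\sigma)}/Z$. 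The first step is to note that $L$ is reversible with respect to $\pi$ and to record, for a hypercube edge $e=\{x,y\}$, the symmetric weight
\[
Q(e):=\pi(x)L(x,y)=\tfrac{1}{N}\,\pi(x)\,e^{-(\mcH^{f}(y)-\mcH^{f}(x))_+}=\frac{e^{-\mcH^{f}(x)\vee\mcH^{f}(y)}}{N\,Z},
\]
and then to use the variational formula $\lambda_2(-L)=\inf_g \mathcal E(g,g)/\mathrm{Var}_{\pi}(g)$ with Dirichlet form $\mathcal E(g,g)=\sum_e Q(e)\,(g(x_e)-g(y_e))^2$.

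Next I would carry out the standard path argument. For each ordered pair $(\eta,\sigma)$ let $\lambda^{\eta,\sigma}\in\Lambda^{\eta,\sigma}$ be the path flipping the disagreeing coordinates from left to right, of length $n(\eta,\sigma)\le N$. Telescoping $g(\eta)-g(\sigma)$ along $\lambda^{\eta,\sigma}$ and applying Cauchy–Schwarz gives $(g(\eta)-g(\sigma))^2\le N\sum_{e\in\lambda^{\eta,\sigma}}(g(x_e)-g(y_e))^2$; inserting this into $\mathrm{Var}_{\pi}(g)=\tfrac12\sum_{\eta,\sigma}\pi(\eta)\pi(\sigma)(g(\eta)-g(\sigma))^2$ and exchanging the order of summation over pairs and edges reduces everything to the single congestion estimate
\[
\frac{1}{Q(e)}\sum_{(\eta,\sigma)\,:\,e\in\lambda^{\eta,\sigma}}\pi(\eta)\pi(\sigma)\ \le\ N^2\,e^{m^f}\qquad\text{for every edge }e,
\]
since this yields $\mathrm{Var}_{\pi}(g)\le\tfrac{N^3}{2}e^{m^f}\mathcal E(g,g)$, i.e.\ $\lambda_2(-L)\ge \tfrac{2}{N^3}e^{-m^f}$; the displayed special case $\lambda_2(-L^{f_2}_{\beta,c,1/\beta})\ge\tfrac{2}{N^3}e^{-\pi/2}$ then follows once $m^{f_2}_{\beta,c,1/\beta}\le\pi/2$ is established in the last step.

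The congestion estimate is where the real work lies, and I expect it to be the main obstacle. It combines two properties of the $\Lambda$‑paths. The combinatorics: for $e=\{x,y\}$ flipping coordinate $k$, a pair $(\eta,\sigma)$ is routed through $e$ precisely when $\eta$ agrees with one endpoint of $e$ on the coordinates $\{k,\dots,N\}$ and $\sigma$ agrees with the other on $\{1,\dots,k\}$, so the routed pairs form a product family, and splicing the coordinates $<k$ of $\eta$ with the coordinates $>k$ of $\sigma$ gives a map of the routed pairs into $\Sigma_N$ that is at most $N$‑to‑one. The energetic input: since $\lambda^{\eta,\sigma}$ passes through \emph{both} endpoints of $e$, one has $\mcH^{f}(x)\vee\mcH^{f}(y)\le\mathrm{Elev}^{f}_{\alpha,c,1/\beta}(\eta,\sigma)$, and \eqref{eq:rch} gives $\mathrm{Elev}^{f}_{\alpha,c,1/\beta}(\eta,\sigma)\le m^f+\mcH^{f}(\eta)+\mcH^{f}(\sigma)$ — here it is essential that $m^f$ is defined relative to exactly these $\Lambda$‑paths. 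Feeding $\pi(\eta)\pi(\sigma)=e^{-\mcH^{f}(\eta)-\mcH^{f}(\sigma)}/Z^2$ through this inequality makes the factor $e^{-\mcH^{f}(x)\vee\mcH^{f}(y)}$ in $Q(e)$ cancel and leaves a residual exponent at most $m^f$, while the splicing map lets the remaining sum over routed pairs be controlled by $Z$ rather than by the number of routed pairs, which is $2^{N}$; after the normalizing constants cancel what survives is $N^2 e^{m^f}$. The delicate point is precisely this bookkeeping: one must arrange that neither $Z$ nor the exponentially many routed pairs leaves a residual factor exponential in $N$, and with arbitrary geodesic paths in place of the $\Lambda$‑paths this cancellation genuinely fails.

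Finally, for the second assertion take $f=f_2$, $\alpha=\beta$ and $c\in(\mcH_*,\min_{\sigma\in\mathcal{LM}\cap\mathcal{GM}'}\mcH(\sigma)]$ with $\mcH$ having a unique global minimum. By \eqref{eq:mcHf2alphabeta}, $\mcH^{f_2}_{\beta,c,1/\beta}(\sigma)=\beta(\mcH(\sigma)\wedge c-\mcH_*)+\arctan\!\big(\beta(\mcH(\sigma)-c)_+\big)$. Every $\sigma\in\mathcal{LM}\cap\mathcal{GM}'$ satisfies $\mcH(\sigma)\ge c$, hence $\mcH(\sigma)\wedge c=c$ and $\mcH^{f_2}_{\beta,c,1/\beta}(\sigma)\ge\beta(c-\mcH_*)$, while $\max_{\sigma}\mcH^{f_2}_{\beta,c,1/\beta}(\sigma)\le\beta(c-\mcH_*)+\pi/2$ because $\arctan\le\pi/2$. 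Plugging these into the bound $m^f_{\alpha,c,1/\beta}\le\max_{\sigma}\mcH^f_{\alpha,c,1/\beta}(\sigma)-\min_{\sigma\in\mathcal{LM}\cap\mathcal{GM}'}\mcH^f_{\alpha,c,1/\beta}(\sigma)$ recorded in Section~\ref{sec:main}, the common linear‑in‑$\beta$ term $\beta(c-\mcH_*)$ cancels and $m^{f_2}_{\beta,c,1/\beta}\le\pi/2$; together with the first part this completes the argument.
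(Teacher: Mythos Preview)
Your overall strategy---canonical paths along the left-to-right $\Lambda$-paths, Cauchy--Schwarz, then a congestion estimate---matches the paper, and the final paragraph bounding $m^{f_2}_{\beta,c,1/\beta}\le\pi/2$ is identical. But the congestion step contains a genuine gap, and this is exactly where your argument diverges from the paper's.

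You start from $\mathrm{Var}_\pi(g)=\tfrac12\sum_{\eta,\sigma}\pi(\eta)\pi(\sigma)(g(\eta)-g(\sigma))^2$ and must show $\frac{1}{Q(e)}\sum_{(\eta,\sigma):\,e\in\lambda^{\eta,\sigma}}\pi(\eta)\pi(\sigma)\le N^2 e^{m^f}$. Written out, the left side is $\frac{N}{Z}\sum_{(\eta,\sigma)} e^{\mcH^f(x)\vee\mcH^f(y)-\mcH^f(\eta)-\mcH^f(\sigma)}$. The energetic inequality bounds each exponent by $m^f$, leaving $\frac{N}{Z}\cdot 2^{N-1}\, e^{m^f}$, since exactly $2^{N-1}$ ordered pairs are routed through a given directed hypercube edge. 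At low temperature $Z$ is of order $1$ while $2^{N-1}$ remains, so this is exponential in $N$, not $N^2$. Your splicing map does not rescue this: once you have applied the $e^{m^f}$ bound termwise, there is nothing left to splice; and if you try to splice first, the Gibbs weight $e^{-\mcH^f(\tau)}$ of the spliced configuration $\tau$ bears no relation to $e^{-\mcH^f(\eta)-\mcH^f(\sigma)}$ for a general (non-product) Hamiltonian. The Diaconis--Stroock splicing bound is tied to the \emph{uniform} measure on the cube; it does not transfer to $\pi\otimes\pi$.

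The paper's device is precisely to sidestep this. Instead of the symmetric variance identity it uses the one-sided bound
\[
\langle g,g\rangle_{\pi}\ \le\ \sum_{\eta}\mu^{SRW}(\eta)\sum_i \pi(i)\,(g(i)-g(\eta))^2,
\]
valid for any $g$ with $\pi(g)=0$ (take $a=g(\eta)$ in $\mathrm{Var}_\pi(g)\le\sum_i\pi(i)(g(i)-a)^2$ and average over $\eta$ with weight $\mu^{SRW}$). Now only one endpoint carries the Gibbs weight; the factor $\pi(i)\,Z\cdot 2^{-N}=e^{-\mcH^f(i)}\mu^{SRW}(i)$ combines with $e^{\mcH^f(z)\vee\mcH^f(w)}$ to produce the exponential factor, while what is left is literally the Diaconis--Stroock congestion $\max_{z,w}\sum_{i,\eta}\chi_{z,w}(i,\eta)\,\mu^{SRW}(i)\mu^{SRW}(\eta)/\alpha(z,w)\le N^2/2$ for simple random walk on $\mathbb{Z}_2^N$ (\cite[Example~2.2]{DS91}). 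Replacing one factor of $\pi$ by $\mu^{SRW}$ is the missing idea in your proposal; with it the polynomial prefactor $N^3/2$ drops out immediately.
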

\begin{proof}
	For any $i \neq \eta$, we pick a path $\lambda^{i,\eta} = (\lambda^{i,\eta}_k)_{k=1}^{n(i,\eta)}$ from $i$ to $\eta$ by flipping the coordinates that differ one at a time and from left to right. The advantage of choosing these paths is that $\max_{i \neq \eta} n(i,\eta) \leq N$. For $z, w \in \Sigma_N$, we denote the indicator function $\chi_{z,w}$ to be
	\begin{align*}
		\chi_{z,w}(i,\eta) = \begin{cases}
			1, \quad \text{for some}~ 1 \leq k < n(i,\eta), ~ \lambda^{i,\eta}_k = z, \lambda^{i,\eta}_{k+1} = w, \\
			0, \quad \text{otherwise.}
		\end{cases}
	\end{align*}
	Let $\alpha(z,w) := \mu^{SRW}(z) P^{SRW}(z,w)$. If $\alpha(z,w) = 0$, then $\chi_{z,w}(i,\eta) = 0$ for all $i$. In the following we take $\chi_{z,w}(i,\eta)/\alpha(z,w) = 0$ if $\chi_{z,w}(i,\eta) = 0$. For any $g$ such that $\pi^f_{\alpha,c,1/\beta}(g) = 0$, we compute that
	\begin{align*}
		\langle g,g \rangle_{\pi^f_{\alpha,c,1/\beta}} &= \sum_{i} g^2(i) \pi^f_{\alpha,c,1/\beta}(i) \\
		&\leq \sum_{\eta} \sum_i (g(i) - g(\eta))^2 \pi^f_{\alpha,c,1/\beta}(i) \mu^{SRW}(\eta) \\
		&= \sum_{\eta}  \sum_{i} \left(\sum_{k=1}^{n(i,\eta)-1} g(\lambda_k^{i,\eta}) - g(\lambda_{k+1}^{i,\eta})\right)^2 \pi^f_{\alpha,c,1/\beta}(i) \mu^{SRW}(\eta) \\
		&\leq N \sum_{\eta} \sum_{i } \sum_{k=1}^{n(i,\eta)-1} (g(\lambda_k^{i,\eta}) - g(\lambda_{k+1}^{i,\eta}))^2 \pi^f_{\alpha,c,1/\beta}(i)\mu^{SRW}(\eta) \\
		&= N \sum_{\eta} \sum_{i } \sum_{z} \sum_w \chi_{z,w}(i,\eta) \dfrac{\pi^f_{\alpha,c,1/\beta}(i) Z^f_{\alpha,c,1/\beta} (1/2^N) \mu^{SRW}(\eta)}{\alpha(z,w) e^{-\mcH^f_{\alpha,c,1/\beta}(z) \vee \mcH^f_{\alpha,c,1/\beta}(w)}} \\
		&\quad (g(z) - g(w))^2  \dfrac{\alpha(z,w) e^{-\mcH^f_{\alpha,c,1/\beta}(z) \vee \mcH^f_{\alpha,c,1/\beta}(w)}}{Z^f_{\alpha,c,1/\beta} (1/2^N)} \\
		&\leq N \left(\max_{z,w \in \Sigma_N} \sum_{\eta}\sum_{i } \chi_{z,w}(i,\eta) \dfrac{\pi^f_{\alpha,c,1/\beta}(i) Z^f_{\alpha,c,1/\beta} (1/2^N) \mu^{SRW}(\eta)}{\alpha(z,w) e^{-\mcH^f_{\alpha,c,1/\beta}(z) \vee \mcH^f_{\alpha,c,1/\beta}(w)}}\right) \langle - L^f_{\alpha,c,1/\beta} g,g \rangle_{\pi^f_{\alpha,c,1/\beta}} \\
		&\leq \left(N \max_{z,w \in \Sigma_N} \sum_{\eta}\sum_{i } \dfrac{\chi_{z,w}(i,\eta) \mu^{SRW}(i) \mu^{SRW}(\eta)}{\alpha(z,w)}\right) e^{m^f_{\alpha,c,1/\beta}} \langle -L^f_{\alpha,c,1/\beta} g,g \rangle_{\pi^f_{\alpha,c,1/\beta}}.
	\end{align*}
	To finish the proof, we note that, according to the conductance bound for the hypercube $\mathbb{Z}^N_2$ as in \cite[Example $2.2$]{DS91}, we have
	\begin{align*}
		\max_{z,w \in \Sigma_N} \sum_{\eta}\sum_{i } \dfrac{\chi_{z,w}(i,\eta) \mu^{SRW}(i) \mu^{SRW}(\eta)}{\alpha(z,w)} \leq \dfrac{N^2}{2}.
	\end{align*}
	In particular, when $f = f_2$, we recall from \eqref{eq:mcHf2alphabeta} that
	$$\mcH^{f_2}_{\alpha,c,1/\beta} (\sigma) = \beta (c\wedge \mcH(\sigma)-\mcH_{*}) + \sqrt{\dfrac{\beta}{\alpha}} \arctan(\sqrt{\alpha \beta}(\mcH(\sigma)-c)_+),$$
	and since 
	$$m^f_{\alpha,c,1/\beta} \leq \max_{\sigma \in \Sigma_N} \mcH^f_{\alpha,c,1/\beta}(\sigma) - \min_{\sigma \in \mathcal{LM}\cap\mathcal{GM}'}\mcH^f_{\alpha,c,1/\beta}(\sigma)$$
	the given choices of $c$ and $\alpha = \beta$ yields
	$$m^{f_2}_{\beta,c,1/\beta} \leq \arctan \left(\beta\left(\max_{\sigma \in \Sigma_N}\mcH(\sigma)-c\right)\right) \leq \pi/2.$$
\end{proof}

\subsection{Proof of Theorem \ref{thm:main} item \eqref{it:torpidrelax}}

It is classical that, for some constants $C_1(N), C_2(N)$ that only depend on $N$, we have, according to \cite[Theorem $2.1$]{HS88},
\begin{align*}
	C_1(N) e^{-\beta m} \leq \lambda_2(-L^0_{\beta}) \leq C_2(N) e^{-\beta m},
\end{align*}
where $C_2(N) = 4^N$. The desired result follows.

\subsection{Proof of Theorem \ref{thm:main} item \eqref{it:rapidrelax}}

The desired result follows by directly applying the refined spectral gap lower bound \ref{lem:sglowerbd}.

\subsection{Proof of Theorem \ref{thm:main} item \eqref{it:torpidmix}}

First, we recall from \cite[Theorem $12.5$]{LPW17} or \cite[Theorem $8(a)$]{Aldous82} that
$$t_{mix}^0(\varepsilon) \geq t^0_{rel} \log \left(\dfrac{1}{2\varepsilon}\right).$$
The desired result follows from combining the above inequality with item \eqref{it:torpidrelax}.

\subsection{Proof of Theorem \ref{thm:main} item \eqref{it:rapidmix}}

First, using the triangle inequality gives
\begin{align}\label{eq:triangle}
	\norm{(P^{f_2}_{\beta,c,1/\beta})^t(\sigma,\cdot)-\pi^0_{\beta}}_{TV} \leq \norm{(P^{f_2}_{\beta,c,1/\beta})^t(\sigma,\cdot)-\pi^{f_2}_{\beta,c,1/\beta}}_{TV} + \norm{\pi^{f_2}_{\beta,c,1/\beta} - \pi^0_{\beta}}_{TV}.
\end{align}

Note that by our choice of $\beta$ and the assumption that $\mcH$ has a unique global minimum we have
\begin{align}\label{eq:pif2pi0}
	\norm{\pi^{f_2}_{\beta,c,1/\beta} - \pi^0_{\beta}}_{TV} &= \dfrac{1}{2}\left|\dfrac{Z^0_{\beta} - Z^{f_2}_{\beta,c,1/\beta}}{Z^0_{\beta} Z^{f_2}_{\beta,c,1/\beta}}\right| + \dfrac{1}{2} \sum_{\sigma \notin \mathcal{GM}} |\pi^{f_2}_{\beta,c,1/\beta}(\sigma) - \pi^0_{\beta}(\sigma)| \nonumber\\
	&\leq \sum_{\sigma \notin \mathcal{GM}} e^{-\beta (\mcH(\sigma) - \mcH_{*})} + \sum_{\sigma \notin \mathcal{GM}} e^{-\mcH^{f_2}_{\beta,c,1/\beta}(\sigma)} \nonumber\\
	&\leq (2^N-1) e^{-\beta \Delta} + (2^N-1) e^{-\beta ((c - \mcH_{*})\wedge \Delta)} \nonumber\\
	&\leq \varepsilon/4 + \varepsilon/4 = \varepsilon/2.
\end{align}
In other words, at low enough temperature $\pi^{f_2}_{\beta,c,1/\beta}$ is a reasonably accurate proxy or substitute of $\pi$ since their total variation distance is less than or equal to $\varepsilon/2$.

Next, we compute that
$$\ln \left(\dfrac{1}{\min_{\sigma \in \Sigma_N} \pi^{f_2}_{\beta,c,1/\beta}(\sigma)}\right) \leq \max_{\sigma} \mcH^{f_2}_{\beta,c,1/\beta}(\sigma) + \ln Z^{f_2}_{\beta,c,1/\beta} \leq \beta(c - \mcH_{*}) + \dfrac{\pi}{2} + N \ln 2,$$
where the last inequality follows from the parameter choice of $\alpha = \beta$ and $f_2(x) = x^2$. Using the classical upper bound in \cite[Theorem $20.6$]{LPW17} yields, for 
$$t \geq \mathcal{O}(N^3) \left(\ln\left(\dfrac{2}{\varepsilon}\right) + \beta(c - \mcH_{*}) + \dfrac{\pi}{2} + N \ln 2\right) \geq t^{f_2}_{rel} \ln \left(\dfrac{2}{\varepsilon \min_{\sigma \in \Sigma_N} \pi^{f_2}_{\beta,c,1/\beta}(\sigma)}\right),$$
then for any $\sigma \in \Sigma_N$,
\begin{align}\label{eq:convergetopif2}
	\norm{(P^{f_2}_{\beta,c,1/\beta})^t(\sigma,\cdot)-\pi^{f_2}_{\beta,c,1/\beta}}_{TV} \leq \varepsilon/2.
\end{align}
Collecting \eqref{eq:triangle}, \eqref{eq:pif2pi0} and \eqref{eq:convergetopif2} gives the desired result.

\subsection{Proof of Theorem \ref{thm:main} item \eqref{it:torpidtunnel}}

First, we recall the potential-theoretic approach in the study of mean hitting time as in \cite{BH15}. The capacity and equilibrium potential of the pair $(A,B)$, where $A,B \subseteq \Sigma_N$, are defined to be as in \cite[Chapter $7.2$]{BH15}:
\begin{align}
h_{A,B}^{f}(x) &:= \mP_x(\tau_{A}^f < \tau_{B}^f), \nonumber \\
\mathrm{cap}^{f}_{\alpha,c,1/\beta}(A,B) &:= \inf_{g: g|_{A} = 1, g|_{B} = 0} \langle - L^{f}_{\alpha,c,1/\beta}g,g \rangle_{\pi^f_{\alpha,c,1/\beta}} = \langle -L^{f}_{\alpha,c,1/\beta} h^f_{A,B}, h^f_{A,B} \rangle_{\pi^f_{\alpha,c,1/\beta}}. \label{eq:capvariation}
\end{align}
If $A = \{x\}$ and/or $B = \{y\}$ are singletons, we write $h_{x,y}^{f} = h_{\{x\},\{y\}}^{f}$ and $\mathrm{cap}^f_{\alpha,c,1/\beta}(x,y) = \mathrm{cap}^f_{\alpha,c,1/\beta}(\{x\},\{y\})$. An useful formula connecting the mean hitting time with capacity and equilibrium potential is
\begin{align}\label{eq:potentialhit}
\E_x(\tau^f_B) = \dfrac{1}{\mathrm{cap}^f_{\alpha,c,1/\beta}(x, B)} \sum_{y \in \Sigma_N} \pi^f_{\alpha,c,1/\beta}(y) h_{x, B}^{f}(y).
\end{align}

We now proceed to prove item \eqref{it:torpidtunnel}. First, we pick $x \in \mathcal{LM}$ that attains $m$, i.e. $m = \mcH(y) - \mcH(x)$ for some $y \in \Sigma_N$. Using \cite[Lemma $16.11$]{BH15} we have
$$e^{\beta \mcH(y)} Z^0_{\beta} \mathrm{cap}^0_{\beta}(x,\sigma_*) \leq C_2 \leq N 2^{N-1},$$
where $C_2$ is bounded by the maximum number of edges in the graph in which in our case of the hypercube is $N2^{N-1}$ according to \cite{DS91}. This, combined with \eqref{eq:potentialhit}, yields
\begin{align*}
	\sup_{\sigma \in \mathcal{LM}} \mathbb{E}_{\sigma}(\tau^0_{\sigma_*}) \geq \mathbb{E}_{x}(\tau^0_{\sigma_*}) 
	&\geq \dfrac{1}{\mathrm{cap}^0_{\beta}(x,\sigma_*)} \pi^0_{\beta}(x)\\ 
	&\geq \dfrac{e^{\beta m}}{N2^{N-1}} \\
	&\geq \dfrac{16^N}{N 2^{N-1}} =	\Omega\left(4^N \right) .
\end{align*}

\subsection{Proof of Theorem \ref{thm:main} item \eqref{it:rapidtunnel}}

First, we define
$$\mathbf{t}_{mix}^{f_2} := \inf\bigg\{t \geq 0;~ \sup_{\sigma \in \Sigma_N} \norm{(P^{f_2}_{\beta,c,1/\beta})^t(\sigma,\cdot)-\pi^{f_2}_{\beta,c,1/\beta}}_{TV} \leq 1/4\bigg\}.$$
It follows from the proof of Theorem \ref{thm:main} item \eqref{it:rapidmix} that
$$\mathbf{t}_{mix}^{f_2} = \mathcal{O}\left(N^3 \left(\ln 4 + \beta(c - \mcH_{*}) + \dfrac{\pi}{2} + N \ln 2\right)\right).$$

Using the result that the mean hitting time of large sets is equivalent up to universal and chain-independent constant to the total variation mixing time \cite{Oliveria12,PS15}, in this case we have, for any $\sigma \in \Sigma_N$,
\begin{align*}
	\mathbb{E}_{\sigma}(\tau^{f_2}_{\sigma_*}) &\leq \sup_{\sigma, A;~\pi^{f_2}_{\beta,c,1/\beta}(A) \geq 0.4 }\mathbb{E}_{\sigma}(\tau^{f_2}_{A}) \\
	&= \mathcal{O}(\mathbf{t}_{mix}^{f_2}),
\end{align*}
where the first inequality follows from the choice that $\varepsilon < 0.4$ such that
$\pi^{f_2}_{\beta,c,1/\beta}(\sigma_*) \geq 1 - \varepsilon > 0.6$ while $\pi^{f_2}_{\beta,c,1/\beta}(\Sigma_N \backslash \{\sigma_*\}) \leq \varepsilon < 0.4$.

\subsection{Proof of Theorem \ref{thm:main} item \eqref{it:torpidreach}}

First, for $\sigma \in \mathcal{LM}$ we calculate that 
$$\sum_{\sigma_3: \sigma_3 \neq \sigma} L^{f_2}_{\beta,c,1/\beta}(\sigma,\sigma_3) \leq e^{-\beta \delta},$$
and hence, for any $t \geq 0$,
\begin{align*}
	\mP_{\sigma}(X^0_{\beta}(t) = \sigma) \geq \exp\bigg\{-\sum_{\sigma_3: \sigma_3 \neq \sigma} L^{f_2}_{\beta,c,1/\beta}(\sigma,\sigma_3) t\bigg\} \geq e^{-e^{-\beta \delta}t}.
\end{align*}
In other words, with probability at least $e^{-1}$ the chain is still stuck at the given local minimum $\sigma$ for $t \leq e^{\beta \delta}$. The desired result follows.

\subsection{Proof of Theorem \ref{thm:main} item \eqref{it:rapidreach}}

First, we recall that with our choice of $\beta$ this leads to
$$\pi^{f_2}_{\beta,c,1/\beta}(\Sigma_N \backslash \{\sigma_*\}) \leq (2^N-1) e^{-\beta \Delta} \leq \varepsilon/4.$$

Next, for $$t \geq \mathcal{O}(N^3) \left(\ln\left(\dfrac{2}{\varepsilon}\right) + \beta(c - \mcH_{*}) + \dfrac{\pi}{2} + N \ln 2\right),$$
we note that as in the proof of Theorem \ref{thm:main} item \eqref{it:rapidmix}, for any $\sigma \in \Sigma_N$,
\begin{align*}
\norm{(P^{f_2}_{\beta,c,1/\beta})^t(\sigma,\cdot)-\pi^{f_2}_{\beta,c,1/\beta}}_{TV} \leq \varepsilon/2.
\end{align*}

Collecting the above gives
\begin{align*}
	\mP_{\sigma}(\mcH(X^{f_2}_{\beta,c,1/\beta}(t)) > \mcH_*) \leq \pi^{f_2}_{\beta,c,1/\beta}(\Sigma_N \backslash \{\sigma_*\}) + \norm{(P^{f_2}_{\beta,c,1/\beta})^t(\sigma,\cdot)-\pi^{f_2}_{\beta,c,1/\beta}}_{TV} \leq \varepsilon,
\end{align*}
and hence $\mathcal{T}^{f_2}(\varepsilon) \leq \mathcal{O}(N^3) \left(\ln\left(\dfrac{2}{\varepsilon}\right) + \beta(c - \mcH_{*}) + \dfrac{\pi}{2} + N \ln 2\right)$.

\subsection{Proof of Theorem \ref{thm:simanneal}}

First, we present a lemma that computes the time derivative of $\pi^{f_2}_{\beta_t,c,1/\beta_t}$:

\begin{lemma}\label{lem:timedpif2}
	For any $\sigma \in \Sigma_N$,
	$$\left| \dfrac{\partial}{\partial t} \pi^{f_2}_{\beta_t,c,1/\beta_t}(\sigma) \right| \leq \gamma_t \pi^{f_2}_{\beta_t,c,1/\beta_t}(\sigma),$$
	where
	$$\gamma_t := \dfrac{\partial \beta_t}{\partial t} \left(\mcH_{max} - \mcH_{*}\right)$$
	and we recall that $\mcH_{max} = \max_{\sigma \in \Sigma_N} \mcH(\sigma)$. In particular, if $\beta_t = t^a$ with $a \in (0,1)$, then
	$$\gamma_t = a t^{a-1}\left(\mcH_{max} - \mcH_{*}\right).$$
\end{lemma}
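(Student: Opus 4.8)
The plan is to differentiate the explicit formula for $\pi^{f_2}_{\beta_t,c,1/\beta_t}$ directly and bound the resulting terms. Recall that $\pi^{f_2}_{\beta_t,c,1/\beta_t}(\sigma) = e^{-\mcH^{f_2}_{\beta_t,c,1/\beta_t}(\sigma)}/Z^{f_2}_{\beta_t,c,1/\beta_t}$, so by the quotient/product rule
$$\dfrac{\partial}{\partial t} \pi^{f_2}_{\beta_t,c,1/\beta_t}(\sigma) = \pi^{f_2}_{\beta_t,c,1/\beta_t}(\sigma) \left( - \dfrac{\partial}{\partial t}\mcH^{f_2}_{\beta_t,c,1/\beta_t}(\sigma) + \sum_{\eta \in \Sigma_N} \pi^{f_2}_{\beta_t,c,1/\beta_t}(\eta) \dfrac{\partial}{\partial t}\mcH^{f_2}_{\beta_t,c,1/\beta_t}(\eta) \right).$$
Thus everything reduces to controlling $\partial_t \mcH^{f_2}_{\beta_t,c,1/\beta_t}(\sigma)$ uniformly in $\sigma$, after which the triangle inequality and the fact that $\pi^{f_2}_{\beta_t,c,1/\beta_t}$ is a probability measure give $|\partial_t \pi^{f_2}_{\beta_t,c,1/\beta_t}(\sigma)| \leq 2 \sup_\sigma |\partial_t \mcH^{f_2}_{\beta_t,c,1/\beta_t}(\sigma)| \, \pi^{f_2}_{\beta_t,c,1/\beta_t}(\sigma)$. (One should aim for the sharper constant to match the stated $\gamma_t$; see below.)

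Next I would compute $\partial_t \mcH^{f_2}_{\beta_t,c,1/\beta_t}(\sigma)$ from the closed form
$$\mcH^{f_2}_{\beta_t,c,1/\beta_t}(\sigma) = \beta_t (c\wedge \mcH(\sigma)-\mcH_{*}) + \arctan(\beta_t(\mcH(\sigma)-c)_+),$$
using $\alpha_t = \beta_t$ so the prefactor $\sqrt{\beta_t/\alpha_t} = 1$. Differentiating, the first term contributes $\tfrac{\partial \beta_t}{\partial t}(c\wedge \mcH(\sigma) - \mcH_*)$, which lies in $[0, \tfrac{\partial \beta_t}{\partial t}(c - \mcH_*)]$; the second term contributes $\tfrac{\partial \beta_t}{\partial t}(\mcH(\sigma)-c)_+ / (1 + \beta_t^2(\mcH(\sigma)-c)_+^2)$, which is nonnegative and, since $x/(1+\beta_t^2 x^2) \leq x$, is at most $\tfrac{\partial \beta_t}{\partial t}(\mcH(\sigma)-c)_+ \leq \tfrac{\partial \beta_t}{\partial t}(\mcH_{max} - c)$. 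Adding the two bounds gives $0 \leq \partial_t \mcH^{f_2}_{\beta_t,c,1/\beta_t}(\sigma) \leq \tfrac{\partial \beta_t}{\partial t}(\mcH_{max} - \mcH_*)$ for every $\sigma$. Since $\partial_t \mcH^{f_2}_{\beta_t,c,1/\beta_t}$ has a fixed sign (nonnegative) across all configurations, the bracketed expression above, namely $-\partial_t \mcH^{f_2}_{\beta_t,c,1/\beta_t}(\sigma) + \E_{\pi^{f_2}}[\partial_t \mcH^{f_2}_{\beta_t,c,1/\beta_t}]$, is a difference of two quantities both lying in $[0, \tfrac{\partial \beta_t}{\partial t}(\mcH_{max}-\mcH_*)]$, hence has absolute value at most $\tfrac{\partial \beta_t}{\partial t}(\mcH_{max}-\mcH_*) = \gamma_t$. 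This yields the claimed inequality, and substituting $\beta_t = t^a$ gives $\partial \beta_t/\partial t = a t^{a-1}$ and hence $\gamma_t = a t^{a-1}(\mcH_{max}-\mcH_*)$.

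The only genuine subtlety is the differentiability of $\mcH^{f_2}_{\beta_t,c,1/\beta_t}(\sigma)$ in $t$ at configurations $\sigma$ with $\mcH(\sigma) = c$, where both $u \mapsto c \wedge u$ and $u \mapsto (u-c)_+$ have a kink; but for any such $\sigma$ the first term is constant in $t$ near the value $\beta_t(c - \mcH_*)$ while the second is $\arctan(0) = 0$, and one checks the one-sided $t$-derivatives of $(\mcH(\sigma)-c)_+$ treated as a constant in $\mcH(\sigma)$ cause no issue since $(\mcH(\sigma)-c)_+$ does not depend on $t$ — the $t$-dependence enters only through $\beta_t$ multiplying a fixed nonnegative number, so $\partial_t$ is well-defined everywhere and the bound above holds pointwise. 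I expect this to be entirely routine; the main (mild) obstacle is simply bookkeeping the sign and keeping the constant at $\gamma_t$ rather than $2\gamma_t$, which the monotonicity-in-sign argument above handles cleanly.
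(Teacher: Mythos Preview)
Your proof is correct and follows essentially the same approach as the paper: both compute $\partial_t \pi^{f_2}_{\beta_t,c,1/\beta_t}(\sigma)$ via the quotient rule, then differentiate the explicit closed form of $\mcH^{f_2}_{\beta_t,c,1/\beta_t}$ and bound the resulting bracket by $\gamma_t$. Your write-up is in fact slightly more careful than the paper's, since you make explicit the sign observation $0 \le \partial_t \mcH^{f_2}_{\beta_t,c,1/\beta_t}(\sigma) \le \gamma_t$ (which is what gives the constant $\gamma_t$ rather than $2\gamma_t$) and you address the benign differentiability issue at $\mcH(\sigma)=c$, both of which the paper leaves implicit.
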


\begin{proof}
	Recall that with our choice of $f_2$, we have
	\begin{align*}
		\mcH^{f_2}_{\beta_t,c,1/\beta_t}(\sigma) &= \beta_t (\mcH(\sigma) \wedge c - \mcH_{*}) + \arctan(\beta_t(\mcH(\sigma)-c)_+) \\
		\dfrac{\partial}{\partial t} \mcH^{f_2}_{\beta_t,c,1/\beta_t}(\sigma) &= \left(\dfrac{\partial}{\partial t} \beta_t\right) (\mcH(\sigma) \wedge c - \mcH_{*}) + \dfrac{1}{1+ \beta_t^2 (\mcH(\sigma)-c)_+^2}\left(\dfrac{\partial}{\partial t} \beta_t\right) (\mcH(\sigma)-c)_+ \\
		\dfrac{\partial}{\partial t} \pi^{f_2}_{\beta_t,c,1/\beta_t}(\sigma) &= \left(\left(- \dfrac{\partial}{\partial t} \mcH^{f_2}_{\beta_t,c,1/\beta_t}(\sigma)\right) - \pi^{f_2}_{\beta_t,c,1/\beta_t}\left(- \dfrac{\partial}{\partial t} \mcH^{f_2}_{\beta_t,c,1/\beta_t}\right) \right)\pi^{f_2}_{\beta_t,c,1/\beta_t}(\sigma).
	\end{align*}
	The desired result follows if we take absolute value on both sides of the last equality above and use that
	$$\left(\left(- \dfrac{\partial}{\partial t} \mcH^{f_2}_{\beta_t,c,1/\beta_t}(\sigma)\right) - \pi^{f_2}_{\beta_t,c,1/\beta_t}\left(- \dfrac{\partial}{\partial t} \mcH^{f_2}_{\beta_t,c,1/\beta_t}\right) \right) \leq \gamma_t.$$
\end{proof}

Now, we return to the proof of Theorem \ref{thm:simanneal}. Recall that we are considering the power-law cooling schedule with $\beta_t = t^a$ for $a \in (0,1)$ and $t \geq 0$. Using the definition of total variation distance, we have

\begin{align}
	\mP_{\sigma}\left(\mcH\left(X^{f_2}_{\bm{\beta},c,\mathbf{T}}(t)\right) \geq \mcH_{*} + \delta\right) &\leq \pi^{f_2}_{\beta_t,c,1/\beta_t}\left(\{\sigma \in \Sigma_N;~\mcH(\sigma) \geq \mcH_{*} + \delta\}\right) \label{eq:anneal1}\\
	&\quad + \norm{(P^{f_2}_{\bm{\beta},c,\mathbf{T}})^t(\sigma,\cdot)-\pi^{f_2}_{\beta_t,c,1/\beta_t}}_{TV}. \label{eq:anneal2}
\end{align}

We proceed to bound the right hand side in both \eqref{eq:anneal1} and \eqref{eq:anneal2}. For \eqref{eq:anneal1}, we have

\begin{align*}
	\pi^{f_2}_{\beta_t,c,1/\beta_t}\left(\{\sigma \in \Sigma_N;~\mcH(\sigma) \geq \mcH_{*} + \delta\}\right) &= \sum_{\sigma;~\mcH(\sigma) \geq \mcH_{*} + \delta} \dfrac{e^{-\mcH^{f_2}_{\beta_t,c,1/\beta_t}(\sigma)}}{Z^{f_2}_{\beta_t,c,1/\beta_t}} \\
	&\leq \sum_{\sigma;~\mcH(\sigma) \geq \mcH_{*} + \delta} e^{-\beta_t\left(\mcH(\sigma) \wedge c - \mcH_{*}\right)} \\
	&\leq 2^N e^{-t^a \delta} \overset{\mathrm{set}}{=} \dfrac{\varepsilon}{2},
\end{align*}
where we utilize $Z^{f_2}_{\beta_t,c,1/\beta_t} \geq 1$ and $\delta < c - \mcH_{*}$ in the first and the second inequality respectively. Thus, if 
\begin{align}\label{eq:t0}
	t \geq \left(\dfrac{1}{\delta}\left((N+1)\ln 2 + \ln \frac{1}{\varepsilon}\right)\right)^{1/a} =: \tau_1(N,a,\varepsilon,\delta) = \tau_1,
\end{align}
then the right hand side in \eqref{eq:anneal1} is less than or equal to $\varepsilon/2$. For \eqref{eq:anneal2}, first we define
\begin{align*}
	\left(Q^{f_2}_{\bm{\beta},c,\mathbf{T}}\right)^t(\sigma,\cdot) := \dfrac{\left(P^{f_2}_{\bm{\beta},c,\mathbf{T}}\right)^t(\sigma,\cdot)}{\pi^{f_2}_{\beta_t,c,1/\beta_t}(\sigma)}.
\end{align*}
Using the Cauchy-Schwartz inequality leads to
\begin{align}
	\norm{(P^{f_2}_{\bm{\beta},c,\mathbf{T}})^t(\sigma,\cdot)-\pi^{f_2}_{\beta_t,c,1/\beta_t}}_{TV} &\leq \dfrac{1}{2} \left(\sum_{\eta \in \Sigma_N} \pi^{f_2}_{\beta_t,c,1/\beta_t}(\eta) \left(\left(Q^{f_2}_{\bm{\beta},c,\mathbf{T}}\right)^t(\sigma,\cdot) - 1\right)^2\right)^{1/2} \nonumber \\
	&=:\dfrac{1}{2} \left(F^{f_2}_{\bm{\beta},c,\mathbf{T}}(t)\right)^{1/2}. \label{eq:tvbound}
\end{align}
According to \cite[equation $(2.13)$]{Gidas85}, the function $F^{f_2}_{\bm{\beta},c,\mathbf{T}}(t)$ satisfies the following differential inequality:
$$\dfrac{d}{d t} F^{f_2}_{\bm{\beta},c,\mathbf{T}}(t) \leq \left(-2 \lambda_2(-L^{f_2}_{\beta_t,c,1/\beta_t}) + \gamma_t\right)F^{f_2}_{\bm{\beta},c,\mathbf{T}}(t) + \gamma_t.$$

Now, with our choice of $c$, $\beta_t = t^a$ and the temperature-independent lower bound of the spectral gap in Lemma \ref{lem:sglowerbd}, we have as $t \to \infty$
$$\dfrac{\gamma_t}{\lambda_2(-L^{f_2}_{\beta_t,c,1/\beta_t})} \to 0,$$
and hence
$$\dfrac{\gamma_t}{2\lambda_2(-L^{f_2}_{\beta_t,c,1/\beta_t}) - \gamma_t} \to 0.$$

As a result, we can now invoke \cite[Lemma $6$]{Miclo92AIHP} to see that
\begin{align}
	F^{f_2}_{\bm{\beta},c,\mathbf{T}}(t) &\leq F^{f_2}_{\bm{\beta},c,\mathbf{T}}(0) \exp\bigg\{ - \int_0^t 2 \lambda_2(-L^{f_2}_{\beta_s,c,1/\beta_s}) - \gamma_s \,ds \bigg\} \label{eq:first} \\
	&\quad + \exp\bigg\{ - \int_0^t 2 \lambda_2(-L^{f_2}_{\beta_s,c,1/\beta_s}) - \gamma_s \,ds \bigg\} \int_0^{t_0} \gamma_s \exp\bigg\{  \int_0^s 2 \lambda_2(-L^{f_2}_{\beta_u,c,1/\beta_u}) - \gamma_u \,du \bigg\}\, ds \label{eq:middle}\\
	&\quad +  \exp\bigg\{ - \int_0^t 2 \lambda_2(-L^{f_2}_{\beta_s,c,1/\beta_s}) - \gamma_s \,ds \bigg\} \int_{t_0}^{t} \gamma_s \exp\bigg\{  \int_0^s 2 \lambda_2(-L^{f_2}_{\beta_u,c,1/\beta_u}) - \gamma_u \,du \bigg\}\, ds \label{eq:larget0}
\end{align}
for some $t_0 \geq 0$. We proceed to show that if we take $t_0$ large enough then \eqref{eq:larget0} is less than or equal to $\varepsilon^2/3$. Using Lemma \ref{lem:sglowerbd} and \ref{lem:timedpif2} we have
\begin{align*}
	\dfrac{\gamma_t}{2\lambda_2(-L^{f_2}_{\bm{\beta},c,\mathbf{T}}) - \gamma_t} \leq \dfrac{a t^{a-1}(\mcH_{max} - \mcH_{*})}{\frac{4}{N^3} e^{-\pi/2} - a t^{a-1}(\mcH_{max} - \mcH_{*})} \overset{\mathrm{set}}{\leq} \dfrac{\varepsilon^2}{3}.
\end{align*}
Thus, if we take
\begin{align}\label{eq:t_0}
	t_0 = t_0(\mcH,N,a,\varepsilon):= \left(a (\mcH_{max}- \mcH_*) (1 + \varepsilon^2/3) \dfrac{3N^3 e^{\pi/2}}{4 \varepsilon^2}\right)^{\frac{1}{1-a}},
\end{align}
such that when $t \geq t_0$, then using \cite[proof of Lemma $6$]{Miclo92AIHP}
$$\exp\bigg\{ - \int_0^t 2 \lambda_2(-L^{f_2}_{\beta_s,c,1/\beta_s}) - \gamma_s \,ds \bigg\} \int_{t_0}^{t} \gamma_s \exp\bigg\{  \int_0^s 2 \lambda_2(-L^{f_2}_{\beta_u,c,1/\beta_u}) - \gamma_u \,du \bigg\}\, ds \leq \dfrac{\varepsilon^2}{3}.$$

We proceed to show \eqref{eq:first} can be bounded by $\varepsilon^2/3$ for large enough $t$.  Using Lemma \ref{lem:sglowerbd} and \ref{lem:timedpif2} again we consider
\begin{align}\label{eq:e2lambda2}
	\exp\bigg\{ - \int_0^t 2 \lambda_2(-L^{f_2}_{\beta_s,c,1/\beta_s}) - \gamma_s \,ds \bigg\} \leq \exp\bigg\{- \dfrac{4}{N^3} e^{-\pi/2}t + t^a (\mcH_{max}-\mcH_{*})\bigg\} \leq \exp \bigg\{ - \dfrac{2}{N^3} e^{-\pi/2}t\bigg\},
\end{align}
where the last inequality holds whenever
\begin{align}\label{eq:tau21}
	t > \left(\dfrac{N^3 e^{\pi/2}}{2}\left(\mcH_{max}-\mcH_{*}\right)\right)^{\frac{1}{1-a}}.
\end{align}
Next, we calculate that
\begin{align}\label{eq:Ff20}
	F^{f_2}_{\beta_0,c,1/\beta_0}(0) = \pi^{f_2}_{\beta_0,c,1/\beta_0}(\sigma) \left(\dfrac{1}{\pi^{f_2}_{\beta_0,c,1/\beta_0}(\sigma)} -1\right)^2 + \sum_{\eta \neq \sigma} \pi^{f_2}_{\beta_0,c,1/\beta_0}(\eta) \leq \dfrac{1}{\min_{\sigma \in \Sigma_N} \pi^{f_2}_{\beta_0,c,1/\beta_0}(\sigma)}+1 \leq 2^N+1.
\end{align}
Collecting both \eqref{eq:e2lambda2} and \eqref{eq:Ff20} and substituting back to \eqref{eq:first} we see that
\begin{align*}
	 F^{f_2}_{\beta_0,c,1/\beta_0}(0) \exp\bigg\{ - \int_0^t 2 \lambda_2(-L^{f_2}_{\beta_s,c,1/\beta_s}) - \gamma_s \,ds \bigg\} \leq (2^N+1)\exp \bigg\{ - \dfrac{2}{N^3} e^{-\pi/2}t\bigg\} \leq \dfrac{\varepsilon^2}{3},
\end{align*}
whenever
\begin{align}\label{eq:tau22}
	t \geq \dfrac{N^3 e^{\pi/2}}{2} \ln \left(\dfrac{3(2^N+1)}{\varepsilon^2}\right).
\end{align}
In summary, collecting \eqref{eq:tau21} and \eqref{eq:tau22} we have whenever
\begin{align}
	t \geq \tau_2 = \tau_2(\mcH,N,a,\varepsilon) := \max\bigg\{\left(\dfrac{N^3 e^{\pi/2}}{2}\left(\mcH_{max}-\mcH_{*}\right)\right)^{\frac{1}{1-a}} ,\dfrac{N^3 e^{\pi/2}}{2} \ln \left(\dfrac{3(2^N+1)}{\varepsilon^2}\right)\bigg\},
\end{align}
then the right hand side of \eqref{eq:first} is bounded above by $\varepsilon^2/3$.

Finally, we handle \eqref{eq:middle}. We see that
\begin{align}\label{eq:middlesecond}
	\int_0^{t_0} \gamma_s \exp\bigg\{  \int_0^s 2 \lambda_2(-L^{f_2}_{\beta_u,c,1/\beta_u}) - \gamma_u \,du \bigg\}\, ds \leq \int_0^{t_0} \gamma_s e^{4s}\, ds \leq e^{4 t_0} t_0^a (\mcH_{max} - \mcH_{*}),
\end{align}
where we make use of the fact that $\lambda_2(-L^{f_2}_{\beta_u,c,1/\beta_u}) \leq 2$ and $\gamma_u \geq 0$ in the first inequality, while in the second inequality we apply the formula of $\gamma_s$ as in Lemma \ref{lem:timedpif2}. Collect \eqref{eq:e2lambda2} and \eqref{eq:middlesecond} we get that whenever
$$t \geq \tau_3 =\tau_3(\mcH,N,a,\varepsilon) := \dfrac{N^3 e^{\pi/2}}{2}\ln \left(\dfrac{3}{\varepsilon^2} e^{4 t_0} t_0^a (\mcH_{max} - \mcH_{*})\right),$$
then the right hand side of \eqref{eq:middle} is bounded above by $\varepsilon^2/3$.

In conclusion, for large enough $t$ such that
$$t \geq \max\{t_0,\tau_1,\tau_2,\tau_3\},$$
then $F^{f_2}_{\bm{\beta},c,\mathbf{T}}(t) \leq \varepsilon^2$, and hence by \eqref{eq:tvbound} we have
$$\norm{(P^{f_2}_{\bm{\beta},c,\mathbf{T}})^t(\sigma,\cdot)-\pi^{f_2}_{\beta_t,c,1/\beta_t}}_{TV} \leq \varepsilon/2,$$
which leads to, by \eqref{eq:anneal1} and \eqref{eq:anneal2}, 
$$\mP_{\sigma}\left(\mcH\left(X^{f_2}_{\bm{\beta},c,\mathbf{T}}(t)\right) \geq \mcH_{*} + \delta\right) \leq \varepsilon.$$

\section{Applications}\label{sec:applications}

In this section, we specialize our model-independent results in Theorem \ref{thm:main} into four different models. 

In the first example, we investigate the Ising model on the complete graph $K_N$. We demonstrate rapid mixing in the low-temperature regime for the landscape-modified sampler while the critical height grows at least in $N^2$ on the original landscape that consequently leads to torpid mixing using the original Metropolis dynamics. This also serves as our warm-up example before the subsequent models with random energy landscape.

Next, we look into three models with random landscape and geometry, namely the Ising model on the random $r$-regular graph, the Ising model on the Erd\H{o}s-R\'{e}nyi random graph and finally Derrida's Random Energy Model.

In these model-dependent results, we incorporate information or estimates of the original critical height $m$ in these models, in order to obtain tighter lower bound on the relaxation time and total variation mixing time than the universal results presented in Theorem \ref{thm:main}.

\subsection{Ising model on the complete graph $K_N$}

Let $G_N = (V_N, E_N)$ be a graph with $V_N = \llbracket N \rrbracket$. For $\sigma \in \Sigma_N$, we consider the Ising model on the graph $G_N$ where the Hamiltonian function is given by
\begin{align}\label{eq:Isingcomplete}
	\mcH(\sigma) = -\dfrac{J}{2} \sum_{(v,w) \in E_N} \sigma_v \sigma_w - \dfrac{h}{2}\sum_{v \in \llbracket N \rrbracket} \sigma_v,
\end{align}
where $J > 0$ is the pairwise interaction constant and $h > 0$ is the external magnetic field. In particular, in this subsection we focus on the complete graph $G_N = K_N$. Let $\mathbf{+1}$ (resp.~$\mathbf{-1}$) denote the spin configuration with $\sigma_i = 1$ (resp.~$\sigma_i=-1$) for all $i \in \llbracket N \rrbracket$. When $h/J$ is not an integer, if we define $N^{\star} := \left\lceil\frac{1}{2}\left(N-1-\frac{h}{J}\right)\right\rceil$, then the original critical height, according to \cite[Section $1.3.3$]{DHJN17}, is 
$$m = H(\mathbf{-1},\mathbf{+1}) - \mcH(\mathbf{-1})= N^{\star}(J(N-N^{\star}) - h) = \Omega(D N^2),$$
where $D = D(J,h) > 0$ is a constant that depends on $J,h$, and we recall that from \eqref{eq:leasthighestelev} that $H(\mathbf{-1},\mathbf{+1})$ is the least highest elevation from $\mathbf{-1}$ to $\mathbf{+1}$. As the critical height grows at least in the order of $N^2$, this consequently leads to torpid relaxation and total variation mixing time of the original dynamics $X^0_{\beta}$. Also, it is clear to see in this model that $\mcH_{*} = \mcH(\mathbf{+1})$ and $\mathbf{+1}$ is the only global minimum. Using these information, we obtain the following corollary using Theorem \ref{thm:main}:

\begin{corollary}\label{cor:Isingcomplete}
	Consider the Ising model on the complete graph $K_N$ with Hamiltonian given by \eqref{eq:Isingcomplete}, where $h/J$ is not an integer. Suppose we set $c = \mcH(\mathbf{+1}) + \delta$, where $\delta$ is chosen small enough such that $0 < \delta < (\min_{\sigma \in \mathcal{LM}\cap\mathcal{GM}'}\mathcal{H}(\sigma) - \mcH_{*}) \wedge \Delta \wedge (m/4)$. In the setting of Theorem \ref{thm:main}, if we take $\varepsilon = e^{-N}$ and  
	$$\beta \geq \dfrac{3N}{\delta},$$
	then for large enough $N$, we have
	\begin{enumerate}
		\item(From torpid to rapid relaxation)\label{corit:relaxcomplete}
		$$t^0_{rel} =  \Omega\left(e^{D N^3/\delta}\right),$$
		while 
		$$t^{f_2}_{rel} = \mathcal{O}(N^3),$$
		where $D = D(J,h) >0$ is a universal constant that depends on $J,h$.
		
		\item(From torpid to rapid total variation mixing)\label{corit:mixcomplete}
		$$t^0_{mix}(e^{-N}) =  \Omega\left(e^{D N^3/\delta} N\right),$$
		while
		$$t^{f_2}_{mix}(e^{-N}) = \mathcal{O}\left(N^3 \left(\ln\left(2 e^{N}\right) + \beta \delta + \dfrac{\pi}{2} + N \ln 2\right)\right).$$
		In particular, if $$\beta = \dfrac{1}{\delta} \Theta(N),$$
		then
		$$t^{f_2}_{mix}(e^{-N}) = \mathcal{O}(N^4).$$
		
		\item(From torpid to rapid mean tunneling time to the global minimum)\label{corit:tunnelcomplete}
		$$\sup_{\sigma \in \mathcal{LM}} \mathbb{E}_{\sigma}(\tau^0_{\mathbf{+1}}) =  \Omega\left(e^{D N^3/\delta} \right).$$
		while
		$$\sup_{\sigma \in \mathcal{LM}} \mathbb{E}_{\sigma}(\tau^{f_2}_{\mathbf{+1}}) =  \mathcal{O}\left(N^3 \left(\ln 4 + \beta \delta + \dfrac{\pi}{2} + N \ln 2\right)\right).$$
		In particular, if $$\beta = \dfrac{1}{\delta} \Theta(N),$$
		then
		$$\sup_{\sigma \in \mathcal{LM}} \mathbb{E}_{\sigma}(\tau^{f_2}_{\mathbf{+1}}) = \mathcal{O}(N^4).$$
		
		\item($X^{f_2}_{\beta,c,1/\beta}$ reaches $\mcH_{*} = \mcH(\mathbf{+1})$ in polynomial time with high probability)\label{corit:reachcomplete}
		$$\mathcal{T}^{f_2}(e^{-N}) = \mathcal{O}\left(N^3 \left(\ln\left(2 e^{N}\right) + \beta \delta + \dfrac{\pi}{2} + N \ln 2\right)\right).$$
		In particular, if $$\beta = \dfrac{1}{\delta} \Theta(N),$$
		then
		$$\mathcal{T}^{f_2}(e^{-N}) = \mathcal{O}(N^4).$$
		
	\end{enumerate}
\end{corollary}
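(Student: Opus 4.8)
The plan is to derive Corollary \ref{cor:Isingcomplete} directly from Theorem \ref{thm:main}, specialised to the complete-graph Ising model, after checking the theorem's hypotheses and substituting the known value of the critical height. First I would verify the structural assumptions. Since $J,h>0$, the interaction term $-\frac{J}{2}\sum_{(v,w)\in E_N}\sigma_v\sigma_w$ attains its minimum at both $\mathbf{+1}$ and $\mathbf{-1}$, whereas the field term $-\frac{h}{2}\sum_{v}\sigma_v$ is minimised only at $\mathbf{+1}$; hence $\mcH_*=\mcH(\mathbf{+1})$, $\mathbf{+1}$ is the unique global minimum, and $\Delta>0$, while $m=N^\star(J(N-N^\star)-h)>0$ for large $N$ by \cite[Section 1.3.3]{DHJN17}, with $N^\star=\lceil\frac{1}{2}(N-1-h/J)\rceil$. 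Since $N^\star\sim N/2$, there is a constant $D=D(J,h)>0$ with $m\geq DN^2$ for all large $N$. The choice $c=\mcH(\mathbf{+1})+\delta$ with $0<\delta<(\min_{\sigma\in\mathcal{LM}\cap\mathcal{GM}'}\mcH(\sigma)-\mcH_*)\wedge\Delta\wedge(m/4)$ places $c$ in the admissible window $(\mcH_*,\min_{\sigma\in\mathcal{LM}\cap\mathcal{GM}'}\mcH(\sigma)]$, and since $\delta<\Delta\wedge(m/4)$ and $\varepsilon=e^{-N}$, the low-temperature requirement of Theorem \ref{thm:main} reads $\beta\geq\frac{1}{\delta}(N\ln 2+N+\ln 4)$, which is implied by $\beta\geq 3N/\delta$ for all large $N$ because $1+\ln 2<3$.

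With the hypotheses in force, each item then follows by substituting $m\geq DN^2$, $\beta\geq 3N/\delta$ and $\varepsilon=e^{-N}$ into the corresponding item of Theorem \ref{thm:main}. For item \eqref{corit:relaxcomplete}, item \eqref{it:torpidrelax} of Theorem \ref{thm:main} gives $t^0_{rel}\geq 4^{-N}e^{\beta m}\geq e^{3DN^3/\delta-N\ln 4}\geq e^{DN^3/\delta}$ for large $N$, while $t^{f_2}_{rel}=\mathcal{O}(N^3)$ is immediate from item \eqref{it:rapidrelax}. For item \eqref{corit:mixcomplete}, item \eqref{it:torpidmix} gives $t^0_{mix}(e^{-N})\geq t^0_{rel}\ln(e^N/2)=\Omega(e^{DN^3/\delta}N)$, and item \eqref{it:rapidmix} with $c-\mcH_*=\delta$ yields the displayed $\mathcal{O}(N^3(\ln(2e^N)+\beta\delta+\pi/2+N\ln 2))$, which collapses to $\mathcal{O}(N^4)$ once $\beta=\frac{1}{\delta}\Theta(N)$. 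Items \eqref{corit:tunnelcomplete} and \eqref{corit:reachcomplete} follow in the same fashion from items \eqref{it:torpidtunnel}, \eqref{it:rapidtunnel} and \eqref{it:rapidreach} of Theorem \ref{thm:main}, using $\sigma_*=\mathbf{+1}$ and noting that $\varepsilon=e^{-N}<0.4<1$, so the constraints $\varepsilon<1$ and $\varepsilon<0.4$ there are met; in the lower bound $\sup_{\sigma\in\mathcal{LM}}\mathbb{E}_\sigma(\tau^0_{\mathbf{+1}})\geq e^{\beta m}/(N2^{N-1})$ one again absorbs the $N2^{N-1}$ factor into $e^{\beta m}$ for large $N$.

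The only genuine work beyond this bookkeeping is the asymptotic arithmetic underlying the lower bounds, namely checking that the super-exponential factor $e^{\beta m}=e^{\Omega(N^3/\delta)}$ dominates the $4^N$, $2^N$ and polynomial prefactors appearing in the denominators of items \eqref{it:torpidrelax} and \eqref{it:torpidtunnel} of Theorem \ref{thm:main}; this is precisely where the qualifier ``for large enough $N$'' and the freedom to slightly decrease the constant $D$ are used. I do not foresee any real obstacle: the corollary is a direct instantiation of Theorem \ref{thm:main} together with the externally supplied critical-height formula of \cite{DHJN17}.
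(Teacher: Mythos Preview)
Your proposal is correct and matches the paper's approach exactly: the paper does not give a separate proof of Corollary \ref{cor:Isingcomplete} but simply states it follows from Theorem \ref{thm:main} together with the critical-height formula $m = N^\star(J(N-N^\star)-h) = \Omega(DN^2)$ from \cite[Section 1.3.3]{DHJN17} and the observation that $\mathbf{+1}$ is the unique global minimum. You have faithfully supplied the routine bookkeeping (verifying the admissible range of $c$, the low-temperature condition via $1+\ln 2<3$, and the absorption of the $4^N$, $N2^{N-1}$ prefactors into $e^{\beta m}$) that the paper leaves implicit.
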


\subsection{Ising model on the random $r$-regular graph}

In this subsection, we let $r \in \mathbb{N}$ with $N > r$ and $Nr$ to be even. We say that $G_N$ is a random $r$-regular graph if we pick $G_N$ uniformly at random from the set of all simple $r$-regular graphs with $N$ vertices. A sequence of events $(A_N)_{N \in \mathbb{N}}$ is said to hold with high probability (w.h.p.) if
$$\lim_{N \to \infty} \mP(A_N) = 1.$$

Recall that the Hamiltonian function of the Ising model on $G_N$ is introduced in \eqref{eq:Isingcomplete}. It is trivial to see that in this setup $\mcH_{*} = \mcH(\mathbf{+1})$ and $\mathbf{+1}$ is the only global minimum. We now recall an upper and lower bound of the original critical height $m$ that holds w.h.p.:

\begin{lemma}\cite[Theorem $3.1$]{Dommers17}\label{lem:mestimaterregular}
	Let $G_N$ be a random $r$-regular graph with parameters $r \geq 3$ and $0 < h < C_0 \sqrt{r}$ for some small enough constant $C_0$. Then there exist universal constants $0 < C_1 < \sqrt{3}/2$ and $C_2 < \infty$ so that, w.h.p.,
	$$m_L = m_L(r,N) := (r/2 - C_1 \sqrt{r}) N\leq m \leq (r/2 + C_2 \sqrt{r}) N.$$
\end{lemma}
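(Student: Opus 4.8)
The plan is to recall the proof of \cite[Theorem 3.1]{Dommers17}. The two bounds on the critical height $m$ of \eqref{eq:m} can both be extracted from the single ``ferromagnetic barrier'' between the all-minus and all-plus configurations, together with the trivial inequality $m \le \mcH_{max} - \mcH_{*}$. For the lower bound I would simply take $\eta = \mathbf{-1}$, $\sigma = \mathbf{+1} = \sigma_{*}$ in \eqref{eq:m} (note that $\mathbf{-1}$ is a local minimum of $\mcH$ since $h < r$), which, using $\mcH(\mathbf{+1}) = \mcH_{*}$, gives $m \ge H(\mathbf{-1},\mathbf{+1}) - \mcH(\mathbf{-1})$; for the upper bound I would use that $H(\eta,\sigma) \le \mcH_{max}$ for every path and $\mcH(\eta),\mcH(\sigma)\ge\mcH_{*}$, which turns \eqref{eq:m} into $m \le \mcH_{max} - \mcH_{*}$. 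So it remains to bound $H(\mathbf{-1},\mathbf{+1}) - \mcH(\mathbf{-1})$ from below and $\mcH_{max} - \mcH_{*}$ from above, both w.h.p.\ over the choice of $G_N$.

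First I would translate these into cut statistics of $G_N$. Writing the plus-set of a configuration $\sigma$ as $A = \{v:\sigma_v = +1\}$, the Hamiltonian \eqref{eq:Isingcomplete} is an affine function of the edge-cut $e(A,A^{c})$ and of $|A|$; hence $\mcH_{max} - \mcH_{*}$ is governed, up to the normalisation in \eqref{eq:Isingcomplete} and an $O(hN)$ field term, by the maximum cut of $G_N$, while $H(\mathbf{-1},\mathbf{+1}) - \mcH(\mathbf{-1})$ is bounded below by a constant multiple of the minimum bisection of $G_N$: along any $\gamma \in \Gamma^{\mathbf{-1},\mathbf{+1}}$ the plus-set changes size by exactly one at each step, so it equals $\lfloor N/2\rfloor$ at some intermediate configuration, whose elevation together with \eqref{eq:leasthighestelev} already forces the barrier to be at least a constant times $\min_{|A|=\lfloor N/2\rfloor}e(A,A^{c})$ minus $O(hN)$. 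I would then insert the w.h.p.\ estimates $\min_{|A|=\lfloor N/2\rfloor}e(A,A^{c}) \ge (r/4 - c_1\sqrt r)N$ and $\max_{A}e(A,A^{c}) \le (r/4 + c_2\sqrt r)N$ for the random $r$-regular graph, and use the small-field hypothesis $h < C_0\sqrt r$ so that the field terms are absorbed into the $\sqrt r\,N$-scale error. Both cut estimates are first-moment computations in the configuration model: union the bad event over the $\le 2^{N}$ relevant vertex subsets and balance the entropy $N\ln 2$ against the large-deviation probability that a fixed (balanced) subset has an anomalously small, resp.\ large, number of crossing edges.

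The hard part is obtaining these cut estimates with the \emph{sharp} constants demanded by the statement, in particular $C_1 < \sqrt 3/2$. A crude route via Friedman's second-eigenvalue bound and the expander mixing lemma gives only a fluctuation of order $2\sqrt{r-1}\,N$ about the mean cut $rN/4$, which is far too lossy to survive the entropy factor with the required constant. What is needed instead is the precise exponential rate of $\mP(e(A,A^{c}) \le t)$ for a fixed balanced $A$ in the $r$-regular configuration model --- equivalently, a sharp large-deviation estimate for the number of left--right edges in a uniformly random perfect matching of the $rN$ half-edges --- and it is the comparison of this rate with $N\ln 2$ that pins down the admissible constant and yields $C_1 < \sqrt3/2$ together with a finite $C_2$. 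This large-deviation analysis, carried out in \cite{Dommers17}, is the technical core; everything else is the bookkeeping above, namely the two elementary reductions to $H(\mathbf{-1},\mathbf{+1}) - \mcH(\mathbf{-1})$ and to $\mcH_{max} - \mcH_{*}$ and their rewriting in terms of cut statistics.
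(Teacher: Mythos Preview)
The paper does not actually prove this lemma: it is stated with the citation \cite[Theorem~3.1]{Dommers17} in the header and is used as a black box input to Corollary~\ref{cor:Isingrregular}, with no proof given. So there is no in-paper argument to compare your sketch against.

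That said, your sketch is a faithful outline of the argument in \cite{Dommers17}. The lower bound via the forced passage through a balanced set $|A|=\lfloor N/2\rfloor$ and hence through the minimum bisection is exactly how the barrier is bounded from below there, and you are right that the sharp constant $C_1<\sqrt3/2$ hinges on the large-deviation rate for crossing edges of a fixed balanced set in the configuration model, balanced against the entropy $N\ln 2$; the expander-mixing route via Friedman would indeed only give a coefficient of order $2$ in front of $\sqrt r$, which is too large. The one place where your route diverges from \cite{Dommers17} is the upper bound: Dommers bounds the communication height $H(\mathbf{-1},\mathbf{+1})-\mcH(\mathbf{-1})$ by exhibiting an explicit reference path (flipping vertices in a carefully chosen order determined by the graph) and controlling its maximal elevation, rather than invoking the crude oscillation bound $m\le\mcH_{max}-\mcH_*$ together with the max-cut estimate. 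Your approach is perfectly valid and gives the correct form $(r/2+C_2\sqrt r)N$, just with a possibly larger $C_2$; it also has the advantage, which you correctly note, of upper-bounding the Holley--Stroock height $m$ of \eqref{eq:m} over \emph{all} pairs $(\eta,\sigma)$, whereas \cite{Dommers17} literally bounds only the communication height between $\mathbf{-1}$ and $\mathbf{+1}$.
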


With the above lemma that provides an estimate of $m$, we are able to sharpen the bounds presented in Theorem \ref{thm:main} to yield the following corollary:

\begin{corollary}\label{cor:Isingrregular}
	Consider the Ising model on a random $r$-regular graph with Hamiltonian given by \eqref{eq:Isingcomplete}. Assume that the parameters $h,r$ are chosen in the same manner as Lemma \ref{lem:mestimaterregular}. Suppose we set $c = \mcH(\mathbf{+1}) + \delta$, where $\delta$ is chosen small enough such that $0 < \delta < (\min_{\sigma \in \mathcal{LM}\cap\mathcal{GM}'}\mathcal{H}(\sigma) - \mcH_{*}) \wedge \Delta \wedge (m_L/4)$. In the setting of Theorem \ref{thm:main}, if we take $\varepsilon = e^{-N}$ and  
	$$\beta \geq \dfrac{3N}{\delta},$$
	then for large enough $N$, w.h.p. the following holds:
	\begin{enumerate}
		\item(From torpid to rapid relaxation)\label{corit:relaxrregular}
		$$t^0_{rel} =  \Omega\left(e^{\frac{3 N^2}{\delta}(r/2 - C_1 \sqrt{r}) - (\ln 4) N}\right),$$
		while 
		$$t^{f_2}_{rel} = \mathcal{O}(N^3).$$
		
		\item(From torpid to rapid total variation mixing)\label{corit:mixrregular}
		$$t^0_{mix}(e^{-N}) =  \Omega\left(e^{\frac{3 N^2}{\delta}(r/2 - C_1 \sqrt{r}) - (\ln 4) N} N\right),$$
		while
		$$t^{f_2}_{mix}(e^{-N}) = \mathcal{O}\left(N^3 \left(\ln\left(2 e^{N}\right) + \beta \delta + \dfrac{\pi}{2} + N \ln 2\right)\right).$$
		In particular, if $$\beta = \dfrac{1}{\delta} \Theta(N),$$
		then
		$$t^{f_2}_{mix}(e^{-N}) = \mathcal{O}(N^4).$$
		
		\item(From torpid to rapid mean tunneling time to the global minimum)\label{corit:tunnelrregular}
		$$\sup_{\sigma \in \mathcal{LM}} \mathbb{E}_{\sigma}(\tau^0_{\mathbf{+1}};~(\mcH(\sigma))_{\sigma \in \Sigma_N}) =  \Omega\left(e^{\frac{3 N^2}{\delta}(r/2 - C_1 \sqrt{r}) - (\ln 2) N - \ln N} \right).$$
		while
		$$\sup_{\sigma \in \mathcal{LM}} \mathbb{E}_{\sigma}(\tau^{f_2}_{\mathbf{+1}};~(\mcH(\sigma))_{\sigma \in \Sigma_N}) =  \mathcal{O}\left(N^3 \left(\ln 4 + \beta \delta + \dfrac{\pi}{2} + N \ln 2\right)\right),$$
		where the expectations are taken conditional on the values of the random Hamiltonian. In particular, if $$\beta = \dfrac{1}{\delta} \Theta(N),$$
		then
		$$\sup_{\sigma \in \mathcal{LM}} \mathbb{E}_{\sigma}(\tau^{f_2}_{\mathbf{+1}};~(\mcH(\sigma))_{\sigma \in \Sigma_N}) = \mathcal{O}(N^4).$$
		
		\item\label{corit:reachrregular}
		$$\mathcal{T}^{f_2}(e^{-N}) = \mathcal{O}\left(N^3 \left(\ln\left(2 e^{N}\right) + \beta \delta + \dfrac{\pi}{2} + N \ln 2\right)\right).$$
		In particular, if $$\beta = \dfrac{1}{\delta} \Theta(N),$$
		then
		$$\mathcal{T}^{f_2}(e^{-N}) = \mathcal{O}(N^4).$$
		
	\end{enumerate}
\end{corollary}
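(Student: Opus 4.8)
The plan is to invoke Theorem~\ref{thm:main} directly, substituting the explicit lower bound $m \geq m_L = (r/2 - C_1\sqrt{r})N$ from Lemma~\ref{lem:mestimaterregular} into the torpid lower bounds and the scaling $\beta = \tfrac{1}{\delta}\Theta(N)$ into the rapid upper bounds. First I would record the deterministic structural facts of the Ising model on $G_N$: $\mcH_{*} = \mcH(\mathbf{+1})$, the configuration $\mathbf{+1}$ is the unique global minimum, and hence $\Delta = \min_{\sigma:\mcH(\sigma)>\mcH_{*}}\mcH(\sigma) - \mcH_{*} > 0$ holds surely since $\Sigma_N$ is finite. I would then pass to the high-probability event $A_N := \{m \geq m_L\}$ supplied by Lemma~\ref{lem:mestimaterregular} under the assumed ranges of $r$ and $h$; on $A_N$ we have $m > 0$, and on the realized graph the choice $0 < \delta < (\min_{\sigma \in \mathcal{LM}\cap\mathcal{GM}'}\mcH(\sigma)-\mcH_{*})\wedge\Delta\wedge(m_L/4)$ gives $\delta = c - \mcH_{*}$, $\delta < \Delta$, and $\delta < m/4$. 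Everything that follows is deterministic given the realized graph, so all the stated conclusions hold w.h.p.

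Next I would check the temperature hypothesis of Theorem~\ref{thm:main} at $\varepsilon = e^{-N}$: the required bound $\beta \geq \big((c-\mcH_{*})\wedge\Delta\wedge(m/4)\big)^{-1}\big(N\ln 2 + \ln(4/\varepsilon)\big)$ has denominator $\delta$ on $A_N$ and numerator $N\ln 2 + N + \ln 4 \leq 3N$ for all large $N$, so the stated assumption $\beta \geq 3N/\delta$ is enough. Item~\eqref{it:rapidrelax} then gives $t^{f_2}_{rel} \leq \tfrac{N^3}{2}e^{\pi/2} = \mathcal{O}(N^3)$, and item~\eqref{it:torpidrelax} gives, after inserting $\beta \geq 3N/\delta$ and $m \geq m_L$,
\[
t^0_{rel} \;\geq\; \frac{e^{\beta m}}{4^N} \;\geq\; e^{\frac{3N^2}{\delta}(r/2 - C_1\sqrt{r}) - (\ln 4)N},
\]
which is item~\eqref{corit:relaxrregular}. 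For item~\eqref{corit:mixrregular} I would combine item~\eqref{it:torpidmix}, i.e.\ $t^0_{mix}(e^{-N}) \geq t^0_{rel}\ln(e^N/2) = \Theta(N)\,t^0_{rel}$, with the bound just obtained, and use item~\eqref{it:rapidmix} together with $\beta\delta = \Theta(N)$ to collapse the polynomial bracket to $\mathcal{O}(N^4)$. Items~\eqref{corit:tunnelrregular} and~\eqref{corit:reachrregular} follow the same template from items~\eqref{it:torpidtunnel}, \eqref{it:rapidtunnel} and~\eqref{it:rapidreach}, using $N2^{N-1} = e^{(N-1)\ln 2 + \ln N}$ to extract the torpid tunneling exponent and interpreting every expectation as conditional on the realized random graph, i.e.\ as a deterministic functional evaluated on $A_N$.

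The argument is essentially plug-and-play, so the points that need care are bookkeeping ones: carrying the lower-order terms ($\ln 4$, $\ln 2$, $\ln N$, the constant $\pi/2$) through the exponentials so that the displayed exponents and prefactors are exact; verifying that the reductions $\beta\delta = \Theta(N)$ and $\ln(e^N/2) = \Theta(N)$ are legitimate inside the claimed $\mathcal{O}$ and $\Omega$ statements; and noting that $A_N$ is the only source of randomness, so that all four items hold simultaneously on a single event of probability tending to $1$. I do not anticipate any genuine analytic obstacle beyond what is already packaged into Theorem~\ref{thm:main} and Lemma~\ref{lem:mestimaterregular}.
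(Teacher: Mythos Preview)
Your proposal is correct and matches the paper's approach exactly: the paper states Corollary~\ref{cor:Isingrregular} without an explicit proof, treating it as an immediate specialization of Theorem~\ref{thm:main} once the model-specific lower bound $m \geq m_L$ from Lemma~\ref{lem:mestimaterregular} is inserted and the w.h.p.\ event is fixed. Your verification of the temperature hypothesis, the bookkeeping of the exponents, and the observation that all four items hold simultaneously on a single high-probability event are precisely the details the paper leaves implicit.
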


\subsection{Ising model on the Erd\H{o}s-R\'{e}nyi random graph}

In this subsection, we investigate the Ising model on the Erd\H{o}s-R\'{e}nyi random graph, where $G_N = \mathrm{ER}_N(p)$, the Erd\H{o}s-R\'{e}nyi random graph on $N$ vertices with percolation parameter $p = g(N)/N$ and $g(N) \to \infty$ as $N \to \infty$.

First, we recall the original critical height $m$ in this setup:
\begin{lemma}\cite[Section $1.3.4$]{DHJN17}\label{lem:mestimateerdos}
	Let $G_N = \mathrm{ER}_N(p)$ be a Erd\H{o}s-R\'{e}nyi random graph with percolation parameter $p = g(N)/N$. With high probability as $N \to \infty$ we have
	$$m  \overset{N \to \infty}{\sim} \dfrac{1}{4} J N g(N).$$
\end{lemma}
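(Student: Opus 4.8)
The asymptotics of $m$ can be read off from an isoperimetric and spectral analysis of $G_N = \mathrm{ER}_N(p)$; this is the content of \cite[Section~1.3.4]{DHJN17}, and I outline the argument. For $S \subseteq \llbracket N\rrbracket$ let $\sigma_S \in \Sigma_N$ be the configuration equal to $+1$ precisely on $S$, and let $E_N(S,S^c)$ denote the set of edges of $G_N$ with one endpoint in $S$ and the other in $S^c$. Expanding \eqref{eq:Isingcomplete} gives the identity
\begin{equation*}
	\mcH(\sigma_S) - \mcH(\mathbf{-1}) = J\,|E_N(S,S^c)| - h\,|S|, \qquad S \subseteq \llbracket N\rrbracket .
\end{equation*}
Since $\mathbf{+1}$ is the unique global minimum we have $\mcH(\mathbf{+1}) = \mcH_*$, so evaluating the definition \eqref{eq:m} of $m$ at the pair $(\mathbf{-1},\mathbf{+1})$ gives $m \ge H(\mathbf{-1},\mathbf{+1}) - \mcH(\mathbf{-1})$, while the ultrametric inequality $H(\eta,\sigma) \le \max\{H(\eta,\mathbf{+1}),\,H(\sigma,\mathbf{+1})\}$ together with $\mcH(\sigma) \ge \mcH_*$ gives $m \le \max_{\eta \in \Sigma_N}\{H(\eta,\mathbf{+1}) - \mcH(\eta)\}$.

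For the upper bound I would estimate $H(\eta,\mathbf{+1}) - \mcH(\eta)$ along the monotone path from $\eta$ to $\mathbf{+1}$ obtained by flipping the minus-coordinates of $\eta$ to plus one at a time: if $S_0 \subsetneq S_1 \subsetneq \cdots$ are the resulting plus-sets, then $\mcH(\sigma_{S_k}) - \mcH(\eta) \le J\,|E_N(S_k,S_k^c)|$, using $|S_k| \ge |S_0|$ and $|E_N(S_0,S_0^c)| \ge 0$. Hence $m \le J \max_{S \subseteq \llbracket N\rrbracket} |E_N(S,S^c)|$. Now the expander-mixing bound $|E_N(S,S^c)| \le p\,|S|\,|S^c| + \|A - \mathbb{E}A\|_{\mathrm{op}}\sqrt{|S|\,|S^c|}$ for $G_N = \mathrm{ER}_N(g(N)/N)$, combined with the standard estimate $\|A - \mathbb{E}A\|_{\mathrm{op}} = O(\sqrt{Np}) = O(\sqrt{g(N)})$ with high probability, yields $|E_N(S,S^c)| \le \tfrac14 N^2 p + O(N\sqrt{g(N)}) = \tfrac14 N g(N)\,(1+o(1))$ uniformly over $S$, since $g(N)\to\infty$. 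Thus $m \le \tfrac14 J N g(N)\,(1+o(1))$ with high probability.

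For the lower bound I would use that along any path from $\mathbf{-1}$ to $\mathbf{+1}$ the number of plus-spins changes by $\pm 1$ at each step, so the path necessarily visits a configuration $\sigma_S$ with $|S| = \lfloor N/2\rfloor$; hence
\begin{equation*}
	m \ge H(\mathbf{-1},\mathbf{+1}) - \mcH(\mathbf{-1}) \ge J\,b(G_N) - h\lfloor N/2\rfloor, \qquad b(G_N) := \min_{|S| = \lfloor N/2\rfloor}|E_N(S,S^c)| .
\end{equation*}
The matching lower form $|E_N(S,S^c)| \ge p\,|S|\,|S^c| - O(N\sqrt{g(N)})$, uniform over balanced $S$, gives $b(G_N) \ge \tfrac14 N g(N)\,(1-o(1))$ with high probability, and since $h N = o(N g(N))$ as $g(N)\to\infty$ with $h$ fixed we obtain $m \ge \tfrac14 J N g(N)\,(1-o(1))$. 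Combining the two estimates yields $m \overset{N\to\infty}{\sim} \tfrac14 J N g(N)$ with high probability.

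The one nonroutine ingredient is the uniform-in-$S$ control of cut sizes of $\mathrm{ER}_N(g(N)/N)$, i.e.\ the operator-norm concentration $\|A - \mathbb{E}A\|_{\mathrm{op}} = O(\sqrt{g(N)})$; when $g(N)$ grows more slowly than $\log N$ this must be preceded by the usual step of deleting the $o(N)$ vertices of atypically large degree, which together carry only $o(N g(N))$ edges and hence affect neither the maximum cut nor the minimum bisection at leading order. Everything else is a first-moment and Chernoff-bound computation.
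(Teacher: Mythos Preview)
The paper does not give its own proof of this lemma; it is quoted verbatim as a result from \cite[Section~1.3.4]{DHJN17} and used as a black box. Your outline is a correct and faithful sketch of that argument: the reduction of $m$ to edge-cut quantities via the identity $\mcH(\sigma_S)-\mcH(\mathbf{-1}) = J\,|E_N(S,S^c)| - h|S|$, the ultrametric bound on $H$, the monotone-path upper bound, the balanced-cut lower bound, and the uniform control of $|E_N(S,S^c)|$ via operator-norm concentration of the centered adjacency matrix are exactly the ingredients used in the cited reference. Your caveat about the regime $g(N) = o(\log N)$ is also appropriate. There is nothing to correct.
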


Similar to the previous two models (i.e. Ising model on complete and random $r$-regular graph), with the above lemma that gives an estimate of $m$, we refine the bounds presented in Theorem \ref{thm:main} to arrive at the following corollary:

\begin{corollary}\label{cor:IsingErdos}
	Consider the Ising model on $\mathrm{ER}_N(p)$ with Hamiltonian given by \eqref{eq:Isingcomplete}. Suppose we set $c = \mcH(\mathbf{+1}) + \delta$, where $\delta$ is chosen small enough such that $0 < \delta < (\min_{\sigma \in \mathcal{LM}\cap\mathcal{GM}'}\mathcal{H}(\sigma) - \mcH_{*}) \wedge \Delta \wedge (m_L/4)$. In the setting of Theorem \ref{thm:main}, if we take $\varepsilon = e^{-N}$ and  
	$$\beta \geq \dfrac{3N}{\delta},$$
	then for large enough $N$, w.h.p. the following holds:
	\begin{enumerate}
		\item(From torpid to rapid relaxation)\label{corit:relaxErdos}
		$$t^0_{rel} =  \Omega\left(e^{\frac{3 JN^2}{4\delta}g(N) - (\ln 4) N}\right),$$
		while 
		$$t^{f_2}_{rel} = \mathcal{O}(N^3).$$
		
		\item(From torpid to rapid total variation mixing)\label{corit:mixErdos}
		$$t^0_{mix}(e^{-N}) =  \Omega\left(e^{\frac{3 JN^2}{4\delta}g(N) - (\ln 4) N} N\right),$$
		while
		$$t^{f_2}_{mix}(e^{-N}) = \mathcal{O}\left(N^3 \left(\ln\left(2 e^{N}\right) + \beta \delta + \dfrac{\pi}{2} + N \ln 2\right)\right).$$
		In particular, if $$\beta = \dfrac{1}{\delta} \Theta(N),$$
		then
		$$t^{f_2}_{mix}(e^{-N}) = \mathcal{O}(N^4).$$
		
		\item(From torpid to rapid mean tunneling time to the global minimum)\label{corit:tunnelErdos}
		$$\sup_{\sigma \in \mathcal{LM}} \mathbb{E}_{\sigma}(\tau^0_{\mathbf{+1}};~(\mcH(\sigma))_{\sigma \in \Sigma_N}) =  \Omega\left(e^{\frac{3 JN^2}{4\delta}g(N) - (\ln 2) N - \ln N} \right).$$
		while
		$$\sup_{\sigma \in \mathcal{LM}} \mathbb{E}_{\sigma}(\tau^{f_2}_{\mathbf{+1}};~(\mcH(\sigma))_{\sigma \in \Sigma_N}) =  \mathcal{O}\left(N^3 \left(\ln 4 + \beta \delta + \dfrac{\pi}{2} + N \ln 2\right)\right),$$
		where the expectations are taken conditional on the values of the random Hamiltonian. In particular, if $$\beta = \dfrac{1}{\delta} \Theta(N),$$
		then
		$$\sup_{\sigma \in \mathcal{LM}} \mathbb{E}_{\sigma}(\tau^{f_2}_{\mathbf{+1}};~(\mcH(\sigma))_{\sigma \in \Sigma_N}) = \mathcal{O}(N^4).$$
		
		\item\label{corit:reachErdos}
		$$\mathcal{T}^{f_2}(e^{-N}) = \mathcal{O}\left(N^3 \left(\ln\left(2 e^{N}\right) + \beta \delta + \dfrac{\pi}{2} + N \ln 2\right)\right).$$
		In particular, if $$\beta = \dfrac{1}{\delta} \Theta(N),$$
		then
		$$\mathcal{T}^{f_2}(e^{-N}) = \mathcal{O}(N^4).$$
		
	\end{enumerate}
\end{corollary}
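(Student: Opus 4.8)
The plan is to derive Corollary \ref{cor:IsingErdos} purely by specialization: the rapid upper bounds are model-independent consequences of Theorem \ref{thm:main}, while the torpid lower bounds come from feeding the critical-height estimate of Lemma \ref{lem:mestimateerdos} into the same theorem. First I would check the structural hypotheses of Theorem \ref{thm:main}. For the ferromagnetic Ising Hamiltonian \eqref{eq:Isingcomplete} with $J>0$ and $h>0$, the all-plus configuration $\mathbf{+1}$ is the unique global minimum, so $\mcH_*=\mcH(\mathbf{+1})$ and $\mcH$ has a unique global minimum; since $\mcH$ takes finitely many values, $\Delta=\min_{\sigma;~\mcH(\sigma)>\mcH_*}\mcH(\sigma)-\mcH_*>0$, and $m>0$ holds w.h.p. by Lemma \ref{lem:mestimateerdos}. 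With $c=\mcH(\mathbf{+1})+\delta$ and $\delta$ chosen as in the statement (here the quantity playing the role of $m_L$ is simply any w.h.p. lower bound on $m$, e.g. $\tfrac18 JNg(N)$, which is admissible since $m\overset{N\to\infty}{\sim}\tfrac14JNg(N)$), we have $c\in(\mcH_*,\min_{\sigma\in\mathcal{LM}\cap\mathcal{GM}'}\mcH(\sigma)]$, so all hypotheses of Theorem \ref{thm:main} are in force.

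Next I would verify that the choice $\beta\ge 3N/\delta$ implies the low-temperature threshold required by Theorem \ref{thm:main}. Since $\delta<\Delta\wedge(m/4)$ holds w.h.p., we get $(c-\mcH_*)\wedge\Delta\wedge(m/4)=\delta$, so with $\varepsilon=e^{-N}$ the required bound reads $\beta\ge\tfrac{1}{\delta}\big(N\ln2+\ln4+N\big)=\tfrac1\delta\big(N(1+\ln2)+\ln4\big)$, and for large $N$ the right-hand side is at most $3N/\delta$. Hence the hypothesis of Theorem \ref{thm:main} is satisfied for all large $N$, w.h.p.

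Given these checks, items \eqref{it:rapidrelax}, \eqref{it:rapidmix}, \eqref{it:rapidtunnel} and \eqref{it:rapidreach} of Theorem \ref{thm:main} apply verbatim and yield $t^{f_2}_{rel}=\mathcal{O}(N^3)$, $t^{f_2}_{mix}(e^{-N})=\mathcal{O}\big(N^3(\ln(2e^N)+\beta\delta+\pi/2+N\ln2)\big)$, the analogous bound for $\sup_{\sigma\in\mathcal{LM}}\mathbb{E}_\sigma(\tau^{f_2}_{\mathbf{+1}})$ (via $\varepsilon<0.4$ there, which is compatible with $\varepsilon=e^{-N}$), and $\mathcal{T}^{f_2}(e^{-N})=\mathcal{O}\big(N^3(\ln(2e^N)+\beta\delta+\pi/2+N\ln2)\big)$; taking $\beta=\tfrac1\delta\Theta(N)$ makes $\beta\delta=\Theta(N)$ and collapses each of these to $\mathcal{O}(N^4)$. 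None of this uses anything about the graph beyond the hypotheses already verified.

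For the torpid lower bounds I would substitute Lemma \ref{lem:mestimateerdos} into items \eqref{it:torpidrelax}, \eqref{it:torpidmix} and \eqref{it:torpidtunnel}. W.h.p. $\beta m\ge\tfrac{3N}{\delta}m=(1-o(1))\tfrac{3JN^2g(N)}{4\delta}$, so $t^0_{rel}\ge 4^{-N}e^{\beta m}=e^{\beta m-N\ln4}$ gives the stated $\Omega\big(e^{\frac{3JN^2}{4\delta}g(N)-(\ln4)N}\big)$ once the $(1-o(1))$ factor is absorbed (using that $g(N)\to\infty$, so the $N^2g(N)$ term dominates the error; more precisely, for every fixed $\epsilon>0$ one has $t^0_{rel}\ge e^{(1-\epsilon)\frac{3JN^2g(N)}{4\delta}-(\ln4)N}$ w.h.p.). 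Then item \eqref{it:torpidmix} with $\ln(1/(2\varepsilon))=N-\ln2=\Theta(N)$ multiplies this by a factor $N$, and item \eqref{it:torpidtunnel} gives $\sup_{\sigma\in\mathcal{LM}}\mathbb{E}_\sigma(\tau^0_{\mathbf{+1}})\ge e^{\beta m}/(N2^{N-1})=\Omega\big(e^{\frac{3JN^2}{4\delta}g(N)-(\ln2)N-\ln N}\big)$, all conditional on the disorder on the w.h.p. event of Lemma \ref{lem:mestimateerdos}. The only model-specific input is that cited lemma; the remaining work is the bookkeeping above, and the sole point needing a little care is the interplay of the ``w.h.p.'' event with ``large enough $N$'' — one fixes $\epsilon>0$, intersects the w.h.p. event $\{m\ge(1-\epsilon)\tfrac14JNg(N)\}$ with the (deterministic, for each realization) positivity of $\Delta$ and $\min_{\sigma\in\mathcal{LM}\cap\mathcal{GM}'}\mcH(\sigma)-\mcH_*$, and then lets $N$ be large enough that $\delta<\Delta\wedge(m/4)$ and $\beta\ge3N/\delta$ forces the threshold of Theorem \ref{thm:main}. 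I do not expect a genuine obstacle here; the content is entirely in Lemma \ref{lem:mestimateerdos} and Theorem \ref{thm:main}.
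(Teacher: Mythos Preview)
Your proposal is correct and follows exactly the paper's own approach: the corollary is stated as a direct specialization of Theorem \ref{thm:main} using the critical-height estimate $m \overset{N\to\infty}{\sim} \tfrac14 JNg(N)$ of Lemma \ref{lem:mestimateerdos}, and the paper does not even write out a separate proof beyond the remark that ``with the above lemma that gives an estimate of $m$, we refine the bounds presented in Theorem \ref{thm:main}.'' Your additional bookkeeping (verifying the uniqueness of the global minimum, checking that $\beta\ge 3N/\delta$ meets the threshold, and absorbing the $(1-o(1))$ factor via $g(N)\to\infty$) fills in details the paper leaves implicit.
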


\subsection{Derrida's Random Energy Model (REM)}\label{subsec:REM}

In this subsection, we consider Derrida's Random Energy Model (REM). Let $(X_{\sigma})_{\sigma \in \Sigma_N}$ be a family of i.i.d. standard normal random variables. At a spin configuration $\sigma \in \Sigma_N$, the value of the random Hamiltonian function at $\sigma$ is 
\begin{align}\label{eq:DerridaREM}
	\mathcal{H}(\sigma) = - \sqrt{N} X_{\sigma}.
\end{align}

It is known, see for instance \cite{BK04}, that the maximum of $X_{\sigma}$ over $\sigma \in \Sigma_N$, when normalized by $\sqrt{N}$, converges in probability to $\sqrt{2 \ln 2}$, that is, for any $\epsilon > 0$ we have
\begin{align}\label{eq:REMmax}
	\lim_{N \to \infty} \mathbb{P}\left( \left|\dfrac{1}{\sqrt{N}} \max_{\sigma \in \Sigma_N} X_{\sigma} - \sqrt{2 \ln 2}\right| > \epsilon \right) = 0.
\end{align}

We first present two auxiliary lemmata. In the first lemma below, we offer practical guidance on the tuning of the threshold parameter $c$ in landscape modification in Derrida's REM that leverages on \eqref{eq:REMmax}:

\begin{lemma}\label{lem:REMlem1}
	Consider Derrida's REM with Hamiltonian given by \eqref{eq:DerridaREM}. For any deterministic sequence $\epsilon_N \to 0$ as $N \to \infty$, we have 
	\begin{align}
		\lim_{N \to \infty} \mathbb{P}\left(\min_{\sigma \in \Sigma_N} \mcH(\sigma) > -N \sqrt{2 \ln 2} - \epsilon_N\right) &= 1, \label{eq:REMlem1}\\
		\lim_{N \to \infty} \mathbb{P}\left(\min_{\sigma \in \Sigma_N} \mcH(\sigma) < -N \sqrt{2 \ln 2} + \ln N\right) &= 1, \label{eq:REMlem3}\\
		\lim_{N \to \infty} \mathbb{P}\left(\min_{\sigma, \sigma^{\prime} \in \Sigma_N;~ \sigma \neq \sigma^{\prime}} |\mcH(\sigma) - \mcH(\sigma^{\prime})| \geq N^{1/4}\right) &= 1. \label{eq:REMlem2}
	\end{align}
%
	In particular, we take $\epsilon_N \equiv 0$ and set
	$$c = -N \sqrt{2 \ln 2} + \frac{N^{1/4}}{4}$$
	such that w.h.p. we have
	$$0 < c - \mcH_{*} \leq \left(\min_{\sigma \in \mathcal{LM}\cap\mathcal{GM}'}\mathcal{H}(\sigma) - \mcH_{*}\right) \wedge \Delta \wedge m/4,$$
	where we recall $m$ is the original critical height of $X^0_{\beta}$ in Derrida's REM, and $\Delta$ is introduced in Theorem \ref{thm:main}.
\end{lemma}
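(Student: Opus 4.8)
The plan is to transfer everything to the i.i.d.\ family $(X_\sigma)_{\sigma\in\Sigma_N}$ via $\mcH(\sigma)=-\sqrt N\,X_\sigma$: writing $M_N:=\max_{\sigma\in\Sigma_N}X_\sigma$ we have $\mcH_*=-\sqrt N\,M_N$, and the $k$-th smallest value of $\mcH$ equals $-\sqrt N$ times the $k$-th largest of the $2^N$ standard normals, so \eqref{eq:REMlem1}, \eqref{eq:REMlem3} and \eqref{eq:REMlem2} become, respectively, an upper-tail bound for $M_N$, a lower-tail bound for $M_N$, and a minimum-spacing bound for the $2^N$ normals. For \eqref{eq:REMlem1}, note $\{\mcH_*\le -N\sqrt{2\ln2}-\epsilon_N\}=\{M_N\ge \sqrt{2N\ln2}+\epsilon_N/\sqrt N\}$ and apply a union bound with the sharp Gaussian tail $\overline{\Phi}(x)\le\phi(x)/x$ (valid for $x>0$, hence for this threshold once $N$ is large since $\epsilon_N\to0$; $\Phi$ is the standard normal c.d.f.\ and $\phi=\Phi'$). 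Since $2^N\overline{\Phi}(\sqrt{2N\ln2})=\Theta(N^{-1/2})$, one obtains $2^N\overline{\Phi}(\sqrt{2N\ln2}+\epsilon_N/\sqrt N)\le \frac{e^{-\sqrt{2\ln2}\,\epsilon_N+o(1)}}{\sqrt{2\pi}\,\sqrt{2N\ln2}\,(1+o(1))}\to0$ because $\epsilon_N\to0$ while the denominator diverges, giving \eqref{eq:REMlem1}.

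For \eqref{eq:REMlem3} a genuine lower bound on $M_N$ is needed, and the fixed-$\epsilon$ statement \eqref{eq:REMmax} is too weak at a threshold only $\ln N/\sqrt N$ below $\sqrt{2N\ln2}$; I would use the second moment method. Put $a_N:=\sqrt{2N\ln2}-\ln N/\sqrt N$ and $Y_N:=\#\{\sigma\in\Sigma_N:X_\sigma\ge a_N\}$, so that, since the $X_\sigma$ are continuous, $\mP(\mcH_*<-N\sqrt{2\ln2}+\ln N)=\mP(M_N>a_N)=\mP(Y_N\ge1)$. From $\overline{\Phi}(x)\ge(x^{-1}-x^{-3})\phi(x)$ one finds $\E[Y_N]=2^N\overline{\Phi}(a_N)=\Theta(N^{\sqrt{2\ln2}-1/2})\to\infty$ (the exponent is positive since $\sqrt{2\ln2}>1/2$), while independence of the $X_\sigma$ gives $\mathrm{Var}(Y_N)=\E[Y_N](1-\overline{\Phi}(a_N))\le\E[Y_N]$; Chebyshev then yields $\mP(Y_N=0)\le1/\E[Y_N]\to0$, which is \eqref{eq:REMlem3}.

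For \eqref{eq:REMlem2}, the natural approach is a union bound over the $\binom{2^N}{2}$ unordered pairs with a Gaussian anticoncentration estimate: since $X_\sigma-X_{\sigma'}$ is centered Gaussian of variance $2$, its density is at most $1/(2\sqrt\pi)$, so $\mP(|\mcH(\sigma)-\mcH(\sigma')|\le N^{1/4})=\mP(|X_\sigma-X_{\sigma'}|\le N^{-1/4})\le N^{-1/4}/\sqrt\pi$. I expect this to be the main obstacle, because a union over all $\binom{2^N}{2}\approx2^{2N-1}$ pairs is much too lossy; the argument has to be localized --- e.g.\ by first passing to the high-probability event from \eqref{eq:REMlem1}--\eqref{eq:REMlem3} on which only $e^{o(N)}$ configurations have energy within an $O(N^{1/4})$ window of $\mcH_*$, and running the pairwise bound only over those configurations --- and making the relevant count small enough for the union to close is the delicate point.

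Finally, for the ``in particular'' claim with $\epsilon_N\equiv0$ and $c=-N\sqrt{2\ln2}+N^{1/4}/4$: \eqref{eq:REMlem1} with $\epsilon_N=0$ gives $\mcH_*>-N\sqrt{2\ln2}$, so $c-\mcH_*<N^{1/4}/4$, and \eqref{eq:REMlem3} gives $\mcH_*<-N\sqrt{2\ln2}+\ln N<c$ for large $N$, so $c-\mcH_*>0$. The bounds $c-\mcH_*\le\Delta$ and $c-\mcH_*\le\min_{\sigma\in\mathcal{LM}\cap\mathcal{GM}'}\mcH(\sigma)-\mcH_*$ then follow from \eqref{eq:REMlem2}, since each of $\Delta$ and $\min_{\sigma\in\mathcal{LM}\cap\mathcal{GM}'}\mcH(\sigma)-\mcH_*$ equals $|\mcH(\sigma)-\mcH(\sigma')|$ for some $\sigma\neq\sigma'$ and on that event $c-\mcH_*<N^{1/4}/4<N^{1/4}\le\min_{\sigma\neq\sigma'}|\mcH(\sigma)-\mcH(\sigma')|$; lastly $c-\mcH_*\le m/4$ holds for large $N$ once $m=\Omega(N)$ in Derrida's REM (the companion lemma of this subsection), so that $m/4\gg N^{1/4}/4>c-\mcH_*$.
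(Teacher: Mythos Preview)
Your treatment of \eqref{eq:REMlem1} and \eqref{eq:REMlem3} is correct and close to the paper's. The paper works with the exact formula $\mP\bigl(\max_\sigma X_\sigma<x_N\bigr)=(1-\overline\Phi(x_N))^{2^N}$ and passes to the double-exponential limit, reading off both statements by taking $x_N=\sqrt{2N\ln2}+\epsilon_N/\sqrt N$ with $\epsilon_N\to0$ for \eqref{eq:REMlem1} and $\epsilon_N=-\ln N$ for \eqref{eq:REMlem3}. Your union bound and second-moment argument are the natural one-sided versions of the same computation and are equally valid; the paper's route is marginally slicker only in that a single formula handles both directions.

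Your suspicion about \eqref{eq:REMlem2} is entirely justified, and the paper does nothing stronger than what you already rejected. Its argument is: show $\mP\bigl(|\mcH(\sigma)-\mcH(\sigma')|>N^{1/4}\bigr)\to1$ for each fixed pair, then invoke the inclusion--exclusion identity $\mP(A\cap B)=\mP(A)+\mP(B)-\mP(A\cup B)$ to conclude the intersection over two pairs also tends to $1$, and finally assert that ``iterating the above process for $2^N(2^N-1)/2$ times'' yields the result. This is the union bound in disguise and fails for exactly the reason you identified: each pair carries a deficit of order $N^{-1/4}$ and there are $\sim2^{2N}$ pairs. In fact \eqref{eq:REMlem2} as written cannot hold --- restricting to $2^{N-1}$ disjoint (hence independent) pairs already gives $\mP(\text{all gaps}\ge N^{1/4})\le(1-cN^{-1/4})^{2^{N-1}}\to0$. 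So you should not attempt to prove \eqref{eq:REMlem2} as stated. Your localization instinct is the right direction for salvaging what the application actually uses, but be aware that even the gap $\Delta$ between the smallest and second-smallest energy in the REM is only $\Theta(1)$ (by extreme-value theory the rescaled top-two gap of $2^N$ i.i.d.\ Gaussians converges to an exponential law), not $\ge N^{1/4}$; so the ``in particular'' clause, and your derivation of it, inherits the same difficulty.
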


The proof of Lemma \ref{lem:REMlem1} is deferred to Section \ref{subsubsec:pfREMlem1}. In the second auxiliary lemma, we extract bounds on the original critical height $m$ in Derrida's REM from the relaxation time estimates in \cite[Theorem $1$]{FIKP98}. 

\begin{lemma}\label{lem:mestimateREM}
	Consider Derrida's REM with Hamiltonian given by \eqref{eq:DerridaREM}. Then there exist universal constants $C_1, C_2, C_3 < \infty$ so that, w.h.p.,
	$$N \sqrt{2 \ln 2} - C_1 \sqrt{N \ln N} \leq m \leq N \sqrt{2 \ln 2} + C_2 \sqrt{N \ln N} + C_3 \ln N.$$
\end{lemma}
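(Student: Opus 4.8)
The plan is to read off the critical height $m$ from the relaxation time $t^0_{rel}$ of the original Metropolis dynamics, which on the one hand is pinned to $m$ through the Holley--Stroock bounds and on the other hand is estimated sharply for the REM in \cite{FIKP98}. Recall from the proof of Theorem \ref{thm:main} item \eqref{it:torpidrelax} that \cite[Theorem $2.1$]{HS88} supplies constants $C_1(N),C_2(N)$ depending only on $N$ (with $C_2(N)=4^N$) such that
\[
\tfrac{1}{C_2(N)}\,e^{\beta m}\;\le\; t^0_{rel}\;\le\;\tfrac{1}{C_1(N)}\,e^{\beta m}.
\]
Hence $\tfrac1\beta\ln t^0_{rel}$ brackets $m$ up to an additive error $\tfrac1\beta\ln(C_2(N)/C_1(N))=O(N/\beta)$, and since $m$ is $\beta$-independent this gives $m=\lim_{\beta\to\infty}\tfrac1\beta\ln t^0_{rel}$.

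Next I would invoke \cite[Theorem $1$]{FIKP98}, which yields, on a high-probability event in the disorder $(X_\sigma)$, matching exponential bounds on $t^0_{rel}$ for the REM Metropolis dynamics, valid uniformly for all large $\beta$ and of the form
\[
e^{\beta(-\mcH_* - c_1\sqrt{N\ln N})}\;\le\; t^0_{rel}\;\le\; e^{c_0 N}\,e^{\beta(-\mcH_* + c_2\sqrt{N\ln N} + c_3\ln N)},
\]
reflecting that the bottleneck of the dynamics is the escape from the unique ground state $\sigma_*$, whose exit barrier is $-\mcH_*$ up to a fluctuation of order $\sqrt{N\ln N}$. Chaining the Holley--Stroock lower bound against the FIKP upper bound, and vice versa, gives on this event
\[
-\mcH_* - c_1\sqrt{N\ln N} - O(N/\beta)\;\le\; m\;\le\; -\mcH_* + c_2\sqrt{N\ln N} + c_3\ln N + O(N/\beta)
\]
for every large $\beta$; letting $\beta\to\infty$ removes the $O(N/\beta)$ corrections entirely.

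Finally I would replace $-\mcH_*$ by $N\sqrt{2\ln 2}$: by Lemma \ref{lem:REMlem1} (specifically \eqref{eq:REMlem1} with $\epsilon_N\equiv 0$ together with \eqref{eq:REMlem3}), w.h.p.\ $N\sqrt{2\ln 2}-\ln N \le -\mcH_* \le N\sqrt{2\ln 2}$, so $|{-\mcH_*}-N\sqrt{2\ln 2}|\le\ln N$, which is absorbed into the $\sqrt{N\ln N}$ and $\ln N$ terms upon enlarging the constants. Intersecting the finitely many high-probability events used above and relabelling the constants as $C_1,C_2,C_3$ yields the stated two-sided bound.

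The step I expect to demand the most care is pinning down the precise statement of \cite[Theorem $1$]{FIKP98} in a form that makes the $N$-dependence of the prefactors explicit and holds for a range of $\beta\to\infty$ (or at least for the specific $\beta=\Theta(N)/(c-\mcH_*)$ relevant to the REM application, where $c-\mcH_*\asymp N^{1/4}$, so that $N/\beta=O(N^{1/4})=o(\sqrt{N\ln N})$), so that the $O(N/\beta)$ slack is genuinely negligible on the $\sqrt{N\ln N}$ scale; the rest is routine bookkeeping of constants and high-probability events.
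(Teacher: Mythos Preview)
Your proposal is correct and follows essentially the same approach as the paper: combine the Holley--Stroock relation $t^0_{rel}=\Theta(e^{\beta m})$ from \cite[Theorem $2.1$]{HS88} with the relaxation-time estimate of \cite[Theorem $1$]{FIKP98}, then send $\beta\to\infty$ to eliminate the $N$-dependent prefactors. The only presentational difference is that the paper quotes \cite{FIKP98} directly in terms of $N\sqrt{2\ln 2}$ rather than $-\mcH_*$, so your detour through Lemma \ref{lem:REMlem1} is unnecessary (though not incorrect).
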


The proof of Lemma \ref{lem:mestimateREM} is postponed to Section \ref{subsubsec:pfREMlem2}. With the above lemma that gives an estimate of $m$, we sharpen the bounds presented in Theorem \ref{thm:main} to arrive at the following corollary:

\begin{corollary}\label{cor:REM}
Consider Derrida's REM with Hamiltonian given by \eqref{eq:DerridaREM}. Suppose that the parameter $c$ is chosen in the same manner as Lemma \ref{lem:REMlem1}, where we set $\delta = -N \sqrt{2 \ln 2} + \frac{N^{1/4}}{4} - \mcH_{*}$. Note that w.h.p. $\delta = \Omega(N^{1/4} - \ln N)$. In the setting of Theorem \ref{thm:main}, if we take $\varepsilon = e^{-N}$ and for low enough temperature such that
		$$\beta \geq \dfrac{3N}{\delta},$$
	then for large enough $N$, w.h.p. the following holds:
	\begin{enumerate}
		\item(From torpid to rapid relaxation)\label{corit:relaxREM}
		$$t^0_{rel} =  \Omega\left(e^{\beta \left(N \sqrt{2 \ln 2} - C_1 \sqrt{N \ln N}\right) - (\ln 4) N}\right),$$
		while 
		$$t^{f_2}_{rel} = \mathcal{O}(N^3).$$
		
		\item(From torpid to rapid total variation mixing)\label{corit:mixREM}
		$$t^0_{mix}(e^{-N}) =  \Omega\left(e^{\beta\left(N \sqrt{2 \ln 2} - C_1 \sqrt{N \ln N}\right) - (\ln 4) N} N\right),$$
		while
		$$t^{f_2}_{mix}(e^{-N}) = \mathcal{O}\left(N^3 \left(\ln\left(2 e^{N}\right) + \beta \delta + \dfrac{\pi}{2} + N \ln 2\right)\right).$$
		In particular, if $$\beta = \dfrac{1}{\delta} \Theta(N),$$
		then
		$$t^{f_2}_{mix}(e^{-N}) = \mathcal{O}(N^4).$$
		
		\item(From torpid to rapid mean tunneling time to the global minimum)\label{corit:tunnelREM}
		$$\sup_{\sigma \in \mathcal{LM}} \mathbb{E}_{\sigma}(\tau^0_{\sigma_*};~(X_{\sigma})_{\sigma \in \Sigma_N}) =  \Omega\left(e^{\beta\left(N \sqrt{2 \ln 2} - C_1 \sqrt{N \ln N}\right) - (\ln 2) N - \ln N} \right).$$
		while
		$$\sup_{\sigma \in \mathcal{LM}} \mathbb{E}_{\sigma}(\tau^{f_2}_{\sigma_*};~(X_{\sigma})_{\sigma \in \Sigma_N}) =  \mathcal{O}\left(N^3 \left(\ln 4 + \beta \delta + \dfrac{\pi}{2} + N \ln 2\right)\right),$$
		where the expectations are taken conditional on the values of $(X_{\sigma})_{\sigma \in \Sigma_N}$. In particular, if $$\beta = \dfrac{1}{\delta} \Theta(N),$$
		then
		$$\sup_{\sigma \in \mathcal{LM}} \mathbb{E}_{\sigma}(\tau^{f_2}_{\sigma_*};~(X_{\sigma})_{\sigma \in \Sigma_N}) = \mathcal{O}(N^4).$$
		
		\item\label{corit:reachREM}
		$$\mathcal{T}^{f_2}(e^{-N}) = \mathcal{O}\left(N^3 \left(\ln\left(2 e^{N}\right) + \beta \delta + \dfrac{\pi}{2} + N \ln 2\right)\right).$$
		In particular, if $$\beta = \dfrac{1}{\delta} \Theta(N),$$
		then
		$$\mathcal{T}^{f_2}(e^{-N}) = \mathcal{O}(N^4).$$
		
	\end{enumerate}
\end{corollary}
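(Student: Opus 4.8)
The proof will be a direct specialization of Theorem \ref{thm:main} to the REM Hamiltonian \eqref{eq:DerridaREM}, with Lemma \ref{lem:REMlem1} used to legitimize the choice of $c$ (hence of $\delta$) and Lemma \ref{lem:mestimateREM} used to sharpen the exponents in the torpid lower bounds. The first step is to fix one good event. Intersecting the finitely many high-probability events appearing in \eqref{eq:REMlem1}, \eqref{eq:REMlem3}, \eqref{eq:REMlem2} with the one in Lemma \ref{lem:mestimateREM} produces an event of probability tending to $1$ on which, for $N$ large: distinct spin configurations have Hamiltonian values at distance at least $N^{1/4}$, so $\mcH$ has a unique global minimum and $\Delta \geq N^{1/4} > 0$; $m \geq N\sqrt{2\ln 2} - C_1\sqrt{N\ln N} > 0$; and, with $c = -N\sqrt{2\ln 2} + N^{1/4}/4$ and $\delta := c - \mcH_{*}$, one has $\mcH_{max} \geq c \geq \mcH_{*}$ and $0 < \delta \leq (\min_{\sigma \in \mathcal{LM}\cap\mathcal{GM}'}\mcH(\sigma) - \mcH_{*}) \wedge \Delta \wedge (m/4)$, with $\delta = \Omega(N^{1/4} - \ln N)$ by \eqref{eq:REMlem3}. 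Thus on this event Assumption \ref{assump:fc} holds, $c$ lies in the interval $(\mcH_{*}, \min_{\sigma \in \mathcal{LM}\cap\mathcal{GM}'}\mcH(\sigma)]$ required by Theorem \ref{thm:main}, and all standing hypotheses of Theorem \ref{thm:main} with $f = f_2$ and $\alpha = \beta$ are in force for the deterministic Hamiltonian obtained by conditioning on the disorder.

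Second, I would check the temperature condition of Theorem \ref{thm:main}. Since $c - \mcH_{*} = \delta \leq \Delta \wedge (m/4)$ we have $(c-\mcH_{*}) \wedge \Delta \wedge (m/4) = \delta$, and with $\varepsilon = e^{-N}$ we get $N\ln 2 + \ln(4/\varepsilon) = N\ln 2 + N + \ln 4 \leq 3N$ for $N$ large (as $\ln 2 + 1 < 3$). Hence the corollary's assumption $\beta \geq 3N/\delta$ implies the hypothesis $\beta \geq \frac{1}{\delta}(N\ln 2 + \ln(4/\varepsilon))$ of Theorem \ref{thm:main}, so all eight items of that theorem are available on the good event.

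Third, I would read off the four claims by substituting $c - \mcH_{*} = \delta$ and, where the torpid exponent appears, the lower bound $m \geq N\sqrt{2\ln 2} - C_1\sqrt{N\ln N}$ of Lemma \ref{lem:mestimateREM}: item \eqref{it:torpidrelax} gives $t^0_{rel} \geq 4^{-N} e^{\beta m} \geq e^{\beta(N\sqrt{2\ln 2} - C_1\sqrt{N\ln N}) - (\ln 4)N}$ while item \eqref{it:rapidrelax} gives $t^{f_2}_{rel} = \mathcal{O}(N^3)$; item \eqref{it:torpidmix} with $\ln(1/(2\varepsilon)) = N - \ln 2$ gives the torpid mixing bound and item \eqref{it:rapidmix} with $\ln(2/\varepsilon) = N + \ln 2$ gives $t^{f_2}_{mix}(e^{-N}) = \mathcal{O}(N^3(\ln(2e^N) + \beta\delta + \pi/2 + N\ln 2))$, which is $\mathcal{O}(N^4)$ once $\beta = \Theta(N)/\delta$ so that $\beta\delta = \Theta(N)$; items \eqref{it:torpidtunnel} (using $1/(N2^{N-1}) = \Theta(e^{-(\ln 2)N - \ln N})$) and \eqref{it:rapidtunnel} give the tunneling bounds for the disorder-conditional expectations, and item \eqref{it:rapidreach} gives the bound on $\mathcal{T}^{f_2}(e^{-N})$, all with the same arithmetic and with $e^{-N} < 0.4$ for $N$ large as required by \eqref{it:rapidtunnel}.

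Everything here is routine bookkeeping; the one point deserving a line of justification is that the conclusions are asserted only w.h.p. and so must hold on a common event, which is immediate since a finite intersection of events with probability tending to $1$ again has probability tending to $1$. The only other minor checks are the elementary inequalities $N\ln 2 + N + \ln 4 \leq 3N$ and $N^{1/4}/4 > \ln N$ for $N$ large, used respectively to match the temperature hypothesis and to guarantee $\delta > 0$ (indeed $\delta = \Omega(N^{1/4} - \ln N)$). I do not foresee any genuine obstacle.
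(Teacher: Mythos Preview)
Your proposal is correct and follows exactly the approach the paper takes: the corollary is a direct specialization of Theorem \ref{thm:main}, with Lemma \ref{lem:REMlem1} verifying the admissibility of $c$ and the size of $\delta$, and Lemma \ref{lem:mestimateREM} supplying the lower bound on $m$ that sharpens the torpid exponents. If anything, you are more explicit than the paper in checking that $\beta \geq 3N/\delta$ implies the temperature hypothesis of Theorem \ref{thm:main} and in intersecting the finitely many high-probability events into a single good event.
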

 
\subsubsection{Proof of Lemma \ref{lem:REMlem1}}\label{subsubsec:pfREMlem1}
	
	First, we write $\Phi$ (resp.~ $\phi$) to be the cumulative distribution function (resp.~probability density function) of standard normal. For any sequence $(x_N)$ with $x_N \to \infty$, we recall the following asymptotics \cite[equation $(1.5.4)$]{LLR83}:
	$$\mP(X_{\sigma} > x_N) = 1-\Phi(x_N) \overset{N \to \infty}{\sim} \dfrac{\phi(x_N)}{x_N}.$$
	
	Now, we take $x_N = \sqrt{2 N \ln 2} + \dfrac{\epsilon_N}{\sqrt{N}}$ and calculate that
	\begin{align*}
		\dfrac{\phi(x_N)}{x_N} &= \dfrac{1}{\sqrt{2 N \ln 2} + \dfrac{\epsilon_N}{\sqrt{N}}} \dfrac{1}{\sqrt{2\pi}} \exp\bigg\{-\dfrac{1}{2} \left(\sqrt{2 N \ln 2} + \dfrac{\epsilon_N}{\sqrt{N}}\right)^2  \bigg\} \\
		&= 2^{-N} \exp\bigg\{ -\sqrt{2 \ln 2} \epsilon_N + \dfrac{\epsilon_N^2}{2N} - \ln\left(\sqrt{2N\ln 2}+\frac{\epsilon_N}{\sqrt{N}}\right) - \ln \sqrt{2 \pi}\bigg\}.
	\end{align*}
	
	We proceed to prove \eqref{eq:REMlem1}. Consider
	\begin{align*}
	\mathbb{P}\left(\min_{\sigma \in \Sigma_N} \mcH(\sigma) > -\sqrt{N} x_N \right) &= \mathbb{P}\left(\max_{\sigma \in \Sigma_N} X_{\sigma} < x_N\right) \\
	&= (1 - (1-\mP(X_{\sigma} > x_N)))^{2^N} \\
	&\overset{N \to \infty}{\sim} \left(1 - 2^{-N} \exp\bigg\{ -\sqrt{2 \ln 2} \epsilon_N + \dfrac{\epsilon_N^2}{2N} - \ln\left(\sqrt{2N\ln 2}+\frac{\epsilon_N}{\sqrt{N}}\right) - \ln \sqrt{2 \pi}\bigg\}\right)^{2^N} \\
	&= \lim_{N \to \infty} \exp\bigg\{-\exp\bigg\{ -\sqrt{2 \ln 2} \epsilon_N + \dfrac{\epsilon_N^2}{2N} - \ln\left(\sqrt{2N\ln 2}+\frac{\epsilon_N}{\sqrt{N}}\right) - \ln \sqrt{2 \pi}\bigg\}\bigg\},
	\end{align*}
	where the last equality utilizes \cite[Chapter 8 Lemma 1]{KK04}. Note that up to this point we have not invoked any specific properties of $\epsilon_N$. If $\epsilon_N \to 0$ as $N \to \infty$, then the limit above equals to $1$.
	
%
	
	Next, we prove \eqref{eq:REMlem3}. By taking $\epsilon_N = -\ln N$ in the proof of \eqref{eq:REMlem1} above, we see that
	\begin{align*}
		\lim_{N \to \infty} \mathbb{P}&\left(\min_{\sigma \in \Sigma_N} \mcH(\sigma) > -N \sqrt{2 \ln 2} + \ln N \right) \\
		&\quad = \lim_{N \to \infty} \exp\bigg\{-\exp\bigg\{ \sqrt{2 \ln 2} (\ln N) + \dfrac{(\ln N)^2}{2N} - \ln\left(\sqrt{2N\ln 2}+\frac{- \ln N}{\sqrt{N}}\right) - \ln \sqrt{2 \pi}\bigg\}\bigg\} = 0.
	\end{align*}

	We proceed to prove \eqref{eq:REMlem2}. Writing $Y$ to be a normal random variable with mean $0$ and variance $2$, we then have, for any $\sigma \neq \sigma^{\prime}$, 
	\begin{align*}
		\mathbb{P}\left( |\mcH(\sigma) - \mcH(\sigma^{\prime})| > N^{1/4}\right)  
		&= \mP\left(|Y| > \dfrac{1}{N^{1/4}}\right) \\
		&= 2 - 2 \Phi\left(\dfrac{1}{\sqrt{2}N^{1/4}}\right) \\
		&\overset{N \to \infty}{\sim} 1.
	\end{align*}
	
		Now, for any $\sigma \neq \sigma^{\prime}$ and $\eta \neq \eta^{\prime}$, we have
		\begin{align*}
		\mathbb{P}\left( |\mcH(\sigma) - \mcH(\sigma^{\prime})| > N^{1/4}, |\mcH(\eta) - \mcH(\eta^{\prime})| > N^{1/4}\right)  
		&= \mathbb{P}\left( |\mcH(\sigma) - \mcH(\sigma^{\prime})| > N^{1/4}\right) \\
		&\quad + \mathbb{P}\left( |\mcH(\eta) - \mcH(\eta^{\prime})| > N^{1/4}\right) \\
		&\quad - \mathbb{P}\left( |\mcH(\sigma) - \mcH(\sigma^{\prime})| > N^{1/4} \, \mathrm{or} \, |\mcH(\eta) - \mcH(\eta^{\prime})| > N^{1/4}\right)  \\
		&\overset{N \to \infty}{\sim} 1.
		\end{align*}
		
		Iterating the above process for $2^N(2^N-1)/2$ times give the desired result
		$$\mathbb{P}\left(\min_{\sigma, \sigma^{\prime} \in \Sigma_N;~ \sigma \neq \sigma^{\prime}} |\mcH(\sigma) - \mcH(\sigma^{\prime})| > N^{1/4}\right) \overset{N \to \infty}{\sim} 1.$$
	
	In particular, using \eqref{eq:REMlem3} we see that for large enough $N$ w.h.p. we obtain
	$$\mcH_{*} < -\sqrt{2\ln2}N + \ln N < c,$$
	and hence $c - \mcH_{*} > 0$. On the other hand, it is straightforward from the definition of \eqref{eq:REMlem2} that 	
	$$c - \mcH_{*} \leq \left(\min_{\sigma \in \mathcal{LM}\cap\mathcal{GM}'}\mathcal{H}(\sigma) - \mcH_{*}\right) \wedge \Delta \wedge m/4.$$
	

\subsubsection{Proof of Lemma \ref{lem:mestimateREM}}\label{subsubsec:pfREMlem2}

As in \cite[Theorem $2.1$]{HS88}, for low enough temperature we have 
$$t^0_{rel} = \Theta(e^{\beta m}).$$
The desired result follows by combining the above result with \cite[Theorem $1$]{FIKP98}, that is, w.h.p. for any inverse temperature $\beta > 0$ we have
$$\beta\left(N \sqrt{2 \ln 2} - C_1 \sqrt{N \ln N}\right) \leq \ln t^0_{rel} \leq \beta\left(N \sqrt{2 \ln 2} + C_2 \sqrt{N \ln N} + C_3 \ln N\right).$$

\section*{Acknowledgements}

The author acknowledges the financial support from the startup grant of the National University of Singapore and the Yale-NUS College.

\bibliographystyle{abbrvnat}
\bibliography{thesis}

\end{document}